\documentclass[11pt]{amsart}
\usepackage{amsmath,amsthm}
\usepackage{amssymb}
\usepackage{pinlabel}
\usepackage[left = 3cm, right = 3 cm , top = 3 cm , bottom = 3cm, marginparwidth=2.5cm ]{geometry}
\usepackage[utf8]{inputenc}
\usepackage{cite}
\usepackage[textsize=small]{todonotes}
\usepackage{enumerate}
\usepackage{mathtools}
\usepackage{tikz}
\usepackage[hidelinks]{hyperref}

\newtheorem{thm}{Theorem}[section]
\newtheorem{prop}[thm]{Proposition}

\newtheorem{cor}[thm]{Corollary}

\newtheorem{question}[thm]{Question}
\newtheorem{example}[thm]{Example}

\newtheorem*{eulerclassone}{Euler Class One Conjecture}

\theoremstyle{remark}
\newtheorem{remark}[thm]{Remark}

\theoremstyle{definition}
\newtheorem{definition}[thm]{Definition}

\tikzset{every picture/.style=thick}
\usetikzlibrary{decorations.markings,intersections}

	\usepackage{imakeidx}
	\makeindex
\begin{document}
	\title{Thurston norm and the Euler class}
	\author[Mehdi Yazdi]{Mehdi Yazdi}
	\address{Department of Mathematics\\King's College London}
	\email{mehdi.yazdi@kcl.ac.uk}
	
	\date{}
	\thanks{}
	
	\begin{abstract}
		In his influential work, Thurston introduced a norm on the second homology group of compact orientable 3-manifolds $M$, which by duality also determines a dual norm on the second cohomology group. A natural question, initiated by Thurston, is whether integral points on the boundary of the dual norm ball have a geometric interpretation. Thurston showed that the Euler class of the oriented tangent plane field to any taut foliation of $M$ lies in the dual unit ball, and conjectured that, conversely, any integral point on the boundary of the dual unit ball is realised as the Euler class of a taut foliation. In this chapter, we discuss how several geometric, topological, and dynamical structures on a 3-manifold give rise to integral points in the dual unit ball of the Thurston norm, and what is known about Thurston's Euler class one conjecture in these contexts. These structures are taut foliations, tight contact structures, pseudo-Anosov flows, quasigeodesic flows, and circular orders on the fundamental group.
	\end{abstract}	
	
	\maketitle

	
	\section{Introduction}
	
	Let $M$ be a compact orientable 3-manifold. Thurston\index{Thurston, William} \cite{thurston1986norm} defined a semi-norm on the second homology group $H_2(M ; \mathbb{R})$ (respectively $H_2(M , \partial M ; \mathbb{R})$) as follows. Given a connected compact orientable surface $S$, define the \emph{negative part of the Euler characteristic} as 
	\[ \chi_-(S) := \max \{ 0 , - \chi(S) \}. \]
	If $S$ is disconnected, with connected components $S_1, \cdots, S_k$, then define 
	\[ \chi_-(S) := \sum_{i=1}^{k} \chi_-(S_i). \]
	In words, $\chi_-(S)$ is obtained by taking the absolute value of the Euler characteristic after deleting any component of $S$ that has positive Euler characteristic (namely any sphere and disc components). Given an integral point $a \in H_2(M ; \mathbb{R})$ (respectively $H_2(M , \partial M ; \mathbb{R})$), define the norm $x(a)$ of $a$ as 
	\[ x(a) := \min \{ \chi_-(S) \hspace{1mm} | \hspace{1mm} [S]= a, \text{ and }S \text{ is a compact properly embedded oriented surface in } M\}. \]
	The norm of a rational point $a$ is defined by scaling, and then the norm is extended to real points continuously. The Thurston norm\index{Thurston!norm} is generally a semi-norm, and the subspace of $H_2(M ; \mathbb{R})$ or $H_2(M , \partial M ; \mathbb{R})$ with norm $0$ is spanned by homology classes of essential surfaces with non-negative Euler characteristics. It follows that if $M$ contains no essential spheres, discs, tori, and annuli, then $x$ is a norm. 
	
	More generally, given a subsurface $N \subset \partial M$, the Thurston norm can be defined on $H_2(M, N; \mathbb{R})$, by considering surfaces whose boundary lie in $N$. Given a compact properly embedded oriented surface $(S, \partial S) \subset (M, N)$ we say $S$ is \emph{norm-minimising in $H_2(M, N)$}\index{norm-minimising}\index{surface!norm-minimising} if $S$ is incompressible and $\chi_-(S) = x([S, \partial S])$.
	
	The Thurston norm on $H_2(M ; \mathbb{R})$ naturally defines a dual norm\index{Thurston!dual norm} on the dual vector space $H^2(M ; \mathbb{R})$ by the formula 
	\[ x^*(u) : = \sup_{0 \neq a \in H_2(M)} \frac{ \langle u , a \rangle}{x(a)},  \]
	where the pairing $\langle \ , \ \rangle $ is between the second cohomology and homology groups. The dual norm on $H^2(M , \partial M ; \mathbb{R})$ is similarly defined. Note that if $x$ is a semi-norm but not a norm, the dual norm could take the values $\pm \infty$ as well. Denote the unit ball of the Thurston norm by $B_{x}$ and the unit ball of the dual norm by $B_{x^*}$. 
	
	Let $d$ be the dimension of the vector space $H_2(M ; \mathbb{R})$. There is a natural embedding of the lattice $\mathbb{Z}^d$ in $H_2(M ; \mathbb{R})$ corresponding to the change of coefficients map $H_2(M ; \mathbb{Z}) \rightarrow H_2(M ; \mathbb{R})$. By definition, every point in the lattice $\mathbb{Z}^d \subset \mathbb{R}^d$ has integer norm. Thurston \cite{thurston1986norm} showed that any norm on $\mathbb{R}^d$ that takes integer values on the lattice $\mathbb{Z}^d$ has unit ball a (possibly non-compact) convex polyhedron and the unit ball of its dual norm is a compact convex polytope with integral vertices. In particular, the unit ball in $H^2(M ; \mathbb{R})$ (respectively in $H^2(M , \partial M ; \mathbb{R})$) of the dual Thurston norm is a compact convex polytope with integral vertices.
	
	Assume that $M$ is irreducible and closed. There are generalisations for the case that the boundary of $M$ is a union of tori. Let $\mathcal{F}$ be a taut foliation of $M$ and $S$ be an embedded incompressible surface in $M$. Roussarie \cite{roussarie1974plongements} and Thurston \cite{thurston1972foliations,thurston1986norm} showed that $S$ can be isotoped such that the induced singular foliation on $M$ has only finitely many singularities all of which are of saddle type. Denote the Euler class\index{Euler class!of a foliation} of the oriented tangent plane field to the foliation $\mathcal{F}$ by $e(\mathcal{F}) \in H^2(M ; \mathbb{R})$. Thurston derived an index sum formula\index{Euler class!index sum formula}\index{index sum formula} for the evaluation of $e(\mathcal{F})$ on the homology class $[S]$ of $S$ and used it to deduce that the following inequality\index{Thurston!inequality} holds
	\begin{eqnarray}
		|\langle e(\mathcal{F}), [S] \rangle | \leq |\chi(S)|.
		\label{eq:Thurston inequality}
	\end{eqnarray}  
	This translates to the statement that the Euler class $e(\mathcal{F})$ has dual Thurston norm at most one. Moreover, Thurston observed that if $\mathcal{F}$ has any compact leaf of negative Euler characteristic, then the equality holds in (\ref{eq:Thurston inequality}). Conversely, he conjectured \cite[p. 129, Conjecture 3]{thurston1986norm} that any integral element $a \in H^2(M ; \mathbb{R})$ of dual norm one is the Euler class of a taut foliation on $M$. 
	
	If $\xi$ is a transversely oriented plane field on a closed orientable 3-manifold, then the integral Euler class $e(\xi)$ of $\xi$ lies in $2 H^2(M ; \mathbb{Z})$. As a corollary, the real Euler class $e(\xi)$ also lies in the image of the map $2 H^2(M ; \mathbb{Z}) \rightarrow H^2(M ; \mathbb{R})$. We call this the \emph{parity condition}\index{Euler class!parity condition for the Euler class of a transversely oriented foliation on an orientable 3-manifolds}\index{parity condition}, which goes back at least to Wood \cite{wood1969foliations}. Thurston was aware of the parity condition and so we assume this condition as part of the hypotheses of his conjecture.

\begin{eulerclassone}[Thurston - 1976]
	Let $M$ be a closed orientable irreducible atoroidal 3-manifold with positive first Betti number. For any integral class $a \in H^2(M ; \mathbb{R})$ of dual Thurston norm one and satisfying the parity condition, there exists a taut foliation $\mathcal{F}$ of $M$ with Euler class $e(\mathcal{F})$ equal to $a$. 
\end{eulerclassone}
	
	A 3-manifold is called \emph{Haken}\index{Haken} if it is irreducible and if it contains a two-sided incompressible surface. A 3-manifold is \emph{atoroidal}\index{atoroidal} if every embedded incompressible torus in it is boundary-parallel. A 3-manifold is \emph{hyperbolic}\index{hyperbolic!manifold} if it admits a complete hyperbolic metric of finite volume, i.e. a complete Riemannian metric of constant sectional curvature $-1$ and with finite volume. Hyperbolic 3-manifolds are irreducible and atoroidal. By Thurston's Hyperbolisation Theorem\index{Thurston!hyperbolisation theorem} for Haken manifolds, any closed atoroidal Haken 3-manifold is hyperbolic. Therefore, in the above conjecture one can assume that $M$ is hyperbolic. 
	
	Novikov \cite{novikov1965topology} showed that every leaf of a taut foliation (compact or not) is incompressible. Thurston \cite{thurston1986norm} used inequality (\ref{eq:Thurston inequality}) to show that if $S$ is a compact leaf of a taut foliation on $M$, then $S$ is norm-minimising\index{norm-minimising!compact leaves of taut foliations are norm-minimising}. Gabai \cite{gabai1983foliations} proved a converse to this statement. Namely, let $M$ be a compact orientable irreducible 3-manifold with boundary a (possibly empty) union of tori, and $S$ be a norm-minimising surface in $M$. Assume that $\partial S$ is \emph{coherently oriented}\index{coherently oriented}, meaning that for every torus component $T \subset \partial M$, $\partial S$ (equipped with the boundary orientation induced from $S$) intersects $T$ in (a possibly empty) collection of curves that have consistent orientations. Then there is a taut foliation $\mathcal{F}$ of $M$ that has $S$ as a union of compact leaves, and such that $\mathcal{F}$ intersects $\partial M$ transversely and the restriction of $\mathcal{F}$ to each component of $\partial M$ has no two-dimensional Reeb component. Note that the condition of $\partial S$ being coherently oriented is necessary for such a foliation $\mathcal{F}$ to exist, because of the transverse orientability of $\mathcal{F}$. Gabai used this theorem to show that Thurston's Euler class one conjecture holds for vertices of the dual unit ball. See \cite[Remark 7.3]{gabai1997problems}, or \cite{gabai2020fullymarked} for a proof. 
	
	\begin{thm}[Euler Class One for Vertices]\index{Euler class!Euler class one theorem for vertices}
		Let $M$ be a compact orientable irreducible 3-manifold with boundary a (possibly empty) union of tori, and with positive first Betti number. Every vertex of the unit ball of the dual Thurston norm is realised as the Euler class of some taut foliation on $M$. 
		\label{thm:Gabai-euler class one for vertices}
	\end{thm}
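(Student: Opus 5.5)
The plan is to use Gabai's existence theorem together with the duality between faces of $B_x$ and vertices of $B_{x^*}$. Let $v$ be a vertex of $B_{x^*}$. By Thurston's description, $B_x$ is a (possibly non-compact) polyhedron, and $B_{x^*}$ is its dual polytope. The vertex $v$ therefore corresponds to a top-dimensional face $F$ of $B_x$, in the sense that
\[ F = \{ a \in B_x : \langle v , a \rangle = 1 \}. \]
(If $x$ is only a semi-norm, we first restrict to the quotient by the norm-zero subspace $V_0$; every finite point of $B_{x^*}$ annihilates $V_0$.) Choose an integral class $a$ in the interior of the cone $\mathbb{R}_{>0} F$. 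Then $a$ is a class with $x(a) = \langle v , a \rangle$, and $v$ is the unique point of $B_{x^*}$ satisfying $\langle u , a \rangle = x(a)$. This uniqueness is the key feature of a vertex.

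Next we realise $a$ by a suitable surface. Let $S$ be a norm-minimising surface representing $a$ (or a multiple of it), with no sphere or disc components; irreducibility lets us discard spheres. We need $\partial S$ to be coherently oriented on each boundary torus. Whenever two arcs of $\partial S \cap T$ have opposite orientations, we can perform a double-curve-sum/annulus cut-and-paste to cancel them. This does not increase $\chi_-$ and does not change the class in $H_2(M,\partial M)$. So we may assume $\partial S$ is coherent. Gabai's theorem, quoted above, then gives a taut foliation $\mathcal{F}$ having $S$ as a union of compact leaves, transverse to $\partial M$, with no Reeb annuli on $\partial M$.

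It remains to compute the Euler class. For the boundary case, $e(\mathcal{F})$ is taken relative to the outward/transverse section along $\partial M$, so that it lies in $H^2(M,\partial M)$, paired against $H_2(M,\partial M)$. By Thurston's inequality (extended to the toral-boundary case), $e(\mathcal{F}) \in B_{x^*}$. Since $S$ is a union of compact leaves, oriented so that the transverse orientation agrees, we get $\langle e(\mathcal{F}), [S] \rangle = -\chi(S) = \chi_-(S) = x(a)$, up to a sign convention which we fix by orienting $S$ appropriately. By the uniqueness noted in the first step, $e(\mathcal{F}) = v$. This holds up to the global sign convention; if needed we replace $\mathcal{F}$ by its reversal, which negates $e$, or use the vertex $-v$ with the class $-a$.

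I expect the main obstacle to be the middle step: producing a norm-minimising representative of an interior class of the face that has coherently oriented boundary, and justifying that the resulting foliation's relative Euler class is well defined and obeys the equality on $[S]$ in the bounded case. The semi-norm case also needs care. There, one must check that the face $F$ is genuinely top-dimensional modulo $V_0$, so that the uniqueness argument still pins down $v$.
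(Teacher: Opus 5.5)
Your proposal is correct and follows essentially the same route as the paper. Both arguments take a norm-minimising surface with coherently oriented boundary whose class lies in the open cone over the top-dimensional face of $B_x$ dual to the vertex, use Gabai's theorem to make it a union of compact leaves, and then conclude, via Thurston's inequality and the top-dimensionality of that face, that the Euler class must equal the vertex. (With the paper's convention $\langle e(\mathcal{F}),[S]\rangle=\chi(S)$ for a positively oriented leaf, the class of $S$ should lie in the cone dual to $-v$, which your closing remark on signs already covers.)
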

	
	In fact, let $w$ be a vertex of the dual norm ball $B_{x^*}$ and $\mathcal{C}$ be the face of the Thurston norm ball $B_{x}$ dual to $v$. Since $v$ is a vertex, $\mathcal{C}$ is top-dimensional. Pick a norm-minimising surface $S$ whose homology class $[S]$ lies in the cone over the interior of the face $\mathcal{C}$ and such that $\partial S$ is coherently oriented. By Gabai's theorem, there is a taut foliation $\mathcal{F}$ that has $S$ as a compact leaf. It turns out that regardless of how the foliation $\mathcal{F}$ outside of the leaf $S$ looks like, the Euler class $e(\mathcal{F})$ is equal to $w$; here the assumption about $\mathcal{C}$ being top-dimensional is used.

	In \cite{yazdi2020thurston}, the author constructed the first counterexamples to the Euler class one conjecture assuming the Fully Marked Surface Theorem. The constructed manifolds have first Betti number $2$, which is the smallest possible since by Theorem \ref{thm:Gabai-euler class one for vertices} the conjecture holds for 3-manifolds with first Betti number equal to one. Moreover the unit ball of the Thurston norm for the constructed counterexamples has a simple diamond shape and the unit dual ball is a rectangle in suitable integral coordinates. 
	
	Let $\mathcal{F}$ be a taut foliation on an orientable 3-manifold $M$.  A compact properly embedded orientable incompressible surface $S$ in $M$ is called \emph{algebraically fully marked with respect to $\mathcal{F}$}\index{fully marked surface!algebraically}\index{surface!algebraically fully marked} if the equality in (1) happens. Thurston's proof of Inequality (\ref{eq:Thurston inequality}) indeed shows that any algebraically fully marked surface is norm-minimising. Thurston observed that compact leaves of taut foliations are fully marked, and similarly a union of compact leaves is fully marked if the members of the union are oriented coherently, i.e. if the transverse orientation of the surface always agrees with the transverse orientation of the foliation or that it always disagrees. The converse is not true. To see this note that every taut foliation of a hyperbolic 3-manifold can be perturbed so that the new foliation has no compact leaves. Since the Euler class is invariant under a homotopy of the plane field of the foliation, the new foliation has the same Euler class as the original one. Hence any fully marked surface with respect to the initial foliation remains fully marked with respect to a new taut foliation with no compact leaf. It is natural to instead ask if a converse holds up to homotopy of plane fields of taut foliations. This is the content of the Fully Marked Surface Theorem, under some additional hypothesis. In \cite{gabai2020fullymarked} Gabai and the author proved the Fully Marked Surface Theorem, thereby giving a negative answer to Thurston's Euler class one conjecture. 
	 
	 \begin{thm}[Fully Marked Surface Theorem]
	 	Let $M$ be a closed hyperbolic 3-manifold, $\mathcal{F}$ be a taut foliation of $M$, and $S$ be an algebraically fully marked surface in $M$. There is a taut foliation $\mathcal{G}$ of $M$ and an embedded surface $S'$ in $M$ such that 
	 	\begin{enumerate}
	 		\item $S'$ is homologous to $S$; 
	 		\item the oriented tangent plane fields of $\mathcal{F}$ and $\mathcal{G}$ are homotopic through plane fields; and 
	 		\item $S'$ is a  union of compact leaves of $\mathcal{G}$.
	 	\end{enumerate} 
	 	\label{thm:fully marked surface}
	 \end{thm}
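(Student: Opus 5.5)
Starting from the algebraically fully marked surface $S$, the plan is to realise it geometrically, then cut along it and apply Gabai's construction, and finally compare plane fields.

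\textbf{Step 1: isotope $S$.} By Roussarie--Thurston, isotope $S$ so that the induced singular foliation $\mathcal{F}|_S$ has only saddle tangencies. Each saddle has index $-1$ and carries a sign $\pm$, according to whether the orientations of $TS$ and $T\mathcal{F}$ agree or disagree there. Thurston's index sum formula gives
\[
\langle e(\mathcal{F}),[S]\rangle = P - N, \qquad \chi(S) = -(P+N) \ \text{(up to overall sign conventions)},
\]
where $P$ and $N$ count the positive and negative saddles. Algebraic full markedness, i.e.\ equality in (\ref{eq:Thurston inequality}), therefore forces all tangencies to have the same sign. After possibly reversing the orientation of $S$, every tangency is positive. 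The surface is then \emph{geometrically} fully marked: wherever $TS = T\mathcal{F}$, the orientations agree. Since $S$ is norm-minimising, I will also discard any sphere or torus components; $M$ is hyperbolic, so no tori can be essential.

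\textbf{Step 2: sutured manifold and Gabai.} Cut $M$ along $S$ to obtain $M_S$, with $R_\pm$ the two copies of $S$. Because $M$ is irreducible and $S$ is norm-minimising, $M_S$ is a taut sutured manifold. Gabai's theorem (a sutured hierarchy of $M_S$) then produces a taut foliation $\mathcal{G}_0$ of $M_S$ with $R_\pm$ as leaves. Gluing back yields a taut foliation $\mathcal{G}$ of $M$ with $S' = S$ as a union of compact leaves. This gives conclusions (1) and (3).

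\textbf{Step 3: homotopy of plane fields (main obstacle).} It remains to show that $T\mathcal{G}$ is homotopic to $T\mathcal{F}$. Since oriented plane fields correspond to unit vector fields, the difference is measured in two stages.

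First, the primary obstruction is the difference of Euler classes on $H_2$. This vanishes only if $e(\mathcal{G}) = e(\mathcal{F})$, which is not automatic, because $S$ need not lie over a top-dimensional face.

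Second, there is a secondary, Hopf-type obstruction on the $3$-cell. It lies in a quotient of $\mathbb{Z}$ by the divisibility of the Euler class, and it must also be controlled.

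My approach is to not use an arbitrary $\mathcal{G}_0$, but to build $\mathcal{G}$ from $\mathcal{F}$ itself. Using geometric full markedness, first homotope $T\mathcal{F}$ near $S$ so that it coincides with $TS$ along $S$ with matching orientation. This is possible because all tangencies are positive, so the relative obstruction on $S$ is just the index count, which is zero. This gives a plane field $\xi \simeq T\mathcal{F}$ tangent to $S$.

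Next, restrict $\xi$ to $M_S$ and compare it with $T\mathcal{G}_0$ rel boundary. The relative obstructions live in $H^2(M_S,\partial M_S)$ and $H^3$. I would realise the needed corrections inside the flexibility of the sutured hierarchy: choose different decomposing surfaces, spin leaves, or insert Reeb-free modifications such as adding spiralling around product annuli or tori. Each such choice changes the relative Euler class by prescribed classes. I expect this step, showing that every relative class, and also the Hopf invariant, can be attained by some taut sutured foliation of $M_S$, to be the hardest part. Hyperbolicity of $M$, which rules out essential tori and annuli in $M_S$ beyond product pieces, should be what makes the hierarchy flexible enough.

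If all relative obstructions vanish after this adjustment, the resulting plane fields agree up to homotopy rel $S$, and gluing gives conclusion (2).
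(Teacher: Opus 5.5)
There is a genuine gap: everything rests on Step 3, which is a programme rather than an argument, and the programme as designed cannot succeed. Steps 1 and 2 are fine but carry none of the weight.

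\begin{itemize}
\item Step 1 is the Roussarie--Thurston normalisation. Your remark in Step 3 is correct: with no negative tangencies, the positive normals of $T\mathcal{F}$ and $TS$ are never antipodal along $S$, so $T\mathcal{F}$ can be homotoped to a plane field $\xi$ with $\xi|_S=TS$.
\item Step 2 is exactly Theorem~\ref{thm:norm-minimising surfaces are leaves}. It yields (1) and (3) for \emph{every} norm-minimising surface, with no full-markedness hypothesis.
\end{itemize}

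So the whole content of the theorem is (2). For that you only offer the hope that sutured hierarchies can attain ``every relative class, and also the Hopf invariant''. This is false as stated. Thurston's index-sum argument runs verbatim for a taut foliation of $M_S$ and an essential surface $T\subset M_S$ with $\partial T\subset\partial M_S$, using the tangent to $\partial T$ (which lies in the boundary leaves) as boundary section. It gives $|\langle e,[T]\rangle|\le-\chi(T)$, so only relative classes obeying this bound can occur.

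More fundamentally, Step 3 never uses that $\xi$ comes from a taut foliation. Its only input is an oriented plane field tangent to a norm-minimising $S$ with $\langle e(\xi),[S]\rangle=\chi(S)$. Such data exist for every even lattice point $w$ of dual norm one on a closed hyperbolic $M$:
\begin{itemize}
\item Choose a norm-minimising $S$ on which $w$ is extremal. With a suitable orientation, $\langle w,[S_i]\rangle=\chi(S_i)$ on each component.
\item Take any plane field $\xi$ with $e(\xi)=w$.
\item After trivialising $TM$, $\xi|_{S_i}$ and $TS_i$ correspond to Gauss maps $S_i\to S^2$ of the same degree $\frac12\chi(S_i)$. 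So $\xi$ can be homotoped to be tangent to $S$.
\end{itemize}
Your Steps 2--3 would then realise $w$ by a taut foliation, i.e.\ prove the Euler class one conjecture. Liu's Corollary~\ref{cor:Liu-taut-foliation}, proved via Heegaard Floer homology and independently of this theorem, shows that conjecture is false. So a proof must feed the foliation $\mathcal{F}$ itself, not just the homotopy class of $T\mathcal{F}$, into the construction on the cut-open manifold.

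It must also exploit the freedom in (1). The survey only cites \cite{gabai2020fullymarked} for the proof, but Section~\ref{sec: fully marked} records that Gabai and the author conjecture the conclusion fails in general if one insists on $S'=S$. Your plan fixes $S'=S$ from Step 2 onwards, and even asks for a homotopy rel $S$, which is stronger than (2). Replacing $S$ by a suitable other norm-minimising surface in its homology class is therefore a missing ingredient, not a convenience. There are only finitely many candidates up to isotopy, since $M$ is hyperbolic.
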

 
	In particular if $S$ is the unique norm-minimising surface in its homology class, then we can take $S' = S$ up to isotopy. 
 	
 	Knowing that Thurston's Euler class one conjecture has a negative answer for taut foliations, a natural question is which cohomology classes are realised as Euler classes of taut foliations. At the time of this writing, we do not have a conjectural answer to this question. 
 	
 	In addition to taut foliations, there are several other topological, geometric, and dynamical structures on a 3-manifold that have an associated Euler class, and such that the dual Thurston norm of their Euler class is at most one. In particular, an analogue of Thurston's inequality (\ref{eq:Thurston inequality}) is known for 
 	\begin{enumerate}
 		\item[a)] tight contact structures, 
 		\item[b)] pseudo-Anosov flows on atoroidal 3-manifolds (respectively quasigeodesic flows on hyperbolic 3-manifolds), and
 		\item[c)] group actions on the circle.
 	\end{enumerate}
	
	Thurston's Euler class one conjecture can be understood as asking for a realisation of integral points on the boundary of the unit ball of dual Thurston norm by interesting topological, geometric, or dynamical structures. In this chapter, we discuss what is known about the analogue of Thurston's conjecture in other contexts, and discuss the status of questions raised in \cite{yazdi2020thurston}. We will see that in the case of tight contact structures, and also for group actions on the circle, it is known that the Euler class one conjecture is weaker than the original Euler class one conjecture for taut foliations. 
	
	In \cite{yazdi2020thurston}, the author asked if the Euler class one conjecture holds for tight contact structures. With Steven Sivek \cite{sivek2023thurston}, we showed that the constructed counterexample cohomology classes in \cite{yazdi2020thurston} are realised as Euler classes of (possibly negative) tight and weakly symplectically fillable contact structures. Recently Yi Liu \cite{liu2024euler} proved the following result. 
	
	\begin{thm}[Liu]
		For every oriented closed hyperbolic 3–manifold $M$, there exists some connected finite cover $\tilde{M}$ of $M$, and some even lattice point $\tilde{w} \in H^2(\tilde{M}; \mathbb{R})$ of dual Thurston norm one such that $\tilde{w}$ is not the real Euler class of any weakly symplectically fillable contact structure on $\tilde{M}$.
		\label{thm:Liu-contact}
	\end{thm}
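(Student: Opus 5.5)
The plan is to transplant the strategy of the counterexamples in \cite{yazdi2020thurston} from taut foliations to weakly fillable contact structures, after passing to a cover in which the needed surfaces exist. The key input is an analogue of the Fully Marked Surface Theorem for fillable contact structures. Let $\xi$ be weakly symplectically fillable on $\tilde M$ and let $S$ be a norm-minimising surface with $\langle e(\xi),[S]\rangle = -\chi(S)$, i.e.\ $S$ is algebraically fully marked. Then $S$ can be realised as a convex surface whose dividing set makes $S$ behave like a compact leaf: every region of $S_-$ is a disc, or dually the Legendrian-realisation count forces the adjunction-type equality. The source of this is the adjunction inequality in a filling, via Kronheimer--Mrowka or Lisca--Mati\'c, so equality is a rigid condition. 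The constraint I want to extract is combinatorial: if $S_1,S_2$ are both fully marked for the same $\xi$, then any cut-and-paste (oriented double-curve sum) of them is again fully marked. This mirrors the foliation situation, where two compact leaves of the same foliation cannot intersect.

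The construction proceeds as follows.

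\textbf{Step 1.} Use virtual fibring (Agol) together with Liu's or Antolín--Kielak-type control of Thurston norm in covers. This should produce a finite cover $\tilde M$ in which the dual norm ball has a face $F$ with a specific configuration. Concretely, I want two norm-minimising surfaces $S_1, S_2$ on which a prescribed even lattice point $\tilde w$ of $\partial B_{x^*}$ evaluates to $-\chi(S_i)$. They should be chosen so that any geometric realisation of both as fully marked pieces of one contact structure forces a contradiction. The contradiction should come from the parity or Euler characteristic count on the dividing curves along $S_1\cap S_2$. A model for this is the counterexample in \cite{yazdi2020thurston}, where $\tilde w$ is a non-vertex point in the relative interior of an edge, and the obstruction comes from gluing along a torus or a surface with prescribed monodromy. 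I would try to realise such a JSJ-like local picture virtually inside a hyperbolic manifold, using the abundance of quasi-Fuchsian and fibred subsurfaces.

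\textbf{Step 2.} Show that no weakly fillable $\xi$ on $\tilde M$ has $e(\xi)=\tilde w$. The argument is as follows. Suppose such a $\xi$ exists. By the adjunction equality, both $S_1$ and $S_2$ are fully marked, and by the contact fully-marked theorem they admit convex realisations with tight, leaf-like dividing sets. Then the splitting $\tilde M \setminus (S_1\cup S_2)$ carries an induced tight structure with specified boundary dividing data. Classification or Giroux-flexibility then yields a sutured-manifold obstruction: a homological count forcing some sutured piece to be non-taut, whence overtwistedness, a contradiction. This is the analogue of the sutured decomposition argument that proves the foliation counterexample.

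\textbf{Main obstacle.} The hard step is Step 1. It requires controlling the dual norm ball of a cover finely enough to find an even lattice point with this rigid two-surface configuration, in an arbitrary hyperbolic $M$. I would first try to reduce to a fibred cover with $b_1\ge 2$ and look at a face of $B_{x^*}$ adjacent to a fibred face, where fibres give fully marked surfaces for all points in the face. I would then search for the point $\tilde w$ on a non-fibred adjacent face, using Liu's techniques on virtual specialness and the spectrum of Thurston norms in covers.
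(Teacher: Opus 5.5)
Your proposal has a genuine gap: it never produces an actual obstruction. The constraint you extract, that the oriented double-curve sum of two fully marked surfaces is again fully marked, is vacuous. Once $e(\xi)=\tilde w$ is fixed, algebraic full markedness depends only on the homology class, and cut-and-paste preserves it by linearity of the pairing and additivity of $\chi$.

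In the foliation setting the contradiction comes from geometry (compact leaves cannot cross). It is Theorem~\ref{thm:fully marked surface} that turns the algebraic equality into that geometric picture. There is no known contact analogue that makes fully marked surfaces ``leaf-like'' in a way that obstructs anything. Your description of the equality case is also off. By Giroux's criterion, a convex surface of positive genus in a tight structure has no disc regions, so equality in Theorem~\ref{thm:eliashberg-inequality} means one of $S_\pm$ is a union of annuli, not discs.

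More seriously, the model you want to transplant is known not to transfer. Sivek and Yazdi \cite{sivek2023thurston} showed that the classes of \cite{yazdi2020thurston} \emph{are} Euler classes of (possibly negative) weakly symplectically fillable contact structures. So the sutured-manifold obstruction that rules out taut foliations there, which does not see the orientation of $M$, cannot rule out contact structures. Relatedly, your Step 2 ends in overtwistedness and never uses fillability. If it worked, it would show that $\tilde w$ is not the Euler class of any tight contact structure. That is stronger than what Liu proves and, as far as the paper records, open. Step 1 does not specify any configuration or explain why a cover carrying one should exist.

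The missing ingredient is a rigid invariant that actually detects weak fillability, and in the paper's sketch it is Floer-theoretic. The argument runs in three steps:
\begin{enumerate}
\item Weak fillability of $\xi$ forces $\widehat{\mathrm{HF}}(-\tilde M,\mathfrak{s}_\xi)\neq 0$, by Ozsv\'ath--Szab\'o.
\item For a pseudo-Anosov mapping torus $M_f$ with fibre of genus at least $3$, the next-to-top Heegaard Floer groups are identified with periodic Floer homology of the suspension flow, by Cotton-Clay, Kutluhan--Lee--Taubes and Lee--Taubes. This gives the criterion: $e_f\pm 2a$ is not realised by a weakly fillable structure whenever $a$ is integral, $\langle a,[S]\rangle=1$, and $\mathrm{PD}(a)$ is not represented by any $1$-periodic trajectory.
\item Agol--Wise virtual compact specialisation is used to build a fibred cover with $b_1\ge 2$. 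In it, some boundary face of the Fried cone has a scaling-invariant open dense set of rational classes carried by no periodic orbit, and $a$ is chosen from there.
\end{enumerate}
Your instinct in Step 1 to pass to a fibred cover and work near the fibred face points roughly in the right direction. However, the obstruction there is dynamical, coming from periodic orbits of the monodromy flow, and not from convex-surface combinatorics.
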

	
	In particular, his result produces many new counterexamples to the Euler class one conjecture for taut foliations.  
	
	\begin{cor}[Liu]
		Every closed hyperbolic 3-manifold has a finite cover for which the Euler class one conjecture (for taut foliations) does not hold. 
		\label{cor:Liu-taut-foliation}
	\end{cor}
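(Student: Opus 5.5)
The plan is to deduce Corollary~\ref{cor:Liu-taut-foliation} from Theorem~\ref{thm:Liu-contact} by passing from taut foliations to weakly symplectically fillable contact structures, using the perturbation theorem of Eliashberg--Thurston. Fix a closed hyperbolic 3-manifold $M$. Let $\tilde{M}$ be the finite cover and $\tilde{w}\in H^2(\tilde{M};\mathbb{R})$ the even lattice point of dual Thurston norm one provided by Theorem~\ref{thm:Liu-contact}. First we check that $\tilde{M}$ satisfies the hypotheses of the Euler Class One Conjecture:
\begin{enumerate}
\item[(i)] $\tilde{M}$ is a closed hyperbolic 3-manifold, since it covers $M$, so it is irreducible and atoroidal.
\item[(ii)] $\tilde{M}$ has positive first Betti number. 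This holds because $H^2(\tilde{M};\mathbb{R})$ contains the nonzero class $\tilde{w}$ (its dual norm is one), so $b_2(\tilde{M})=b_1(\tilde{M})>0$ by Poincar\'e duality.
\item[(iii)] $\tilde{w}$ is integral, has dual norm one, and satisfies the parity condition, which is exactly what ``even lattice point'' means.
\end{enumerate}
So $\tilde{w}$ is a legitimate test class for the conjecture on $\tilde{M}$.

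Next, suppose for contradiction that some taut foliation $\mathcal{F}$ on $\tilde{M}$ has $e(\mathcal{F})=\tilde{w}$. Since $\tilde{M}$ is hyperbolic, it is not $S^1\times S^2$, so $\mathcal{F}$ is not the product foliation. The Eliashberg--Thurston theorem then says that the tangent plane field of $\mathcal{F}$ can be $C^0$-approximated by a positive contact structure $\xi$, and by a negative one as well. Because $\mathcal{F}$ is taut, $\xi$ is weakly symplectically fillable: there is a closed $2$-form dominating $\mathcal{F}$, and it yields a symplectic structure on $\tilde{M}\times[-1,1]$ filling $\xi$ weakly. For a $C^0$-small approximation, $\xi$ is homotopic to $T\mathcal{F}$ through oriented plane fields, so $e(\xi)=e(\mathcal{F})=\tilde{w}$. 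This contradicts Theorem~\ref{thm:Liu-contact}.

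The main point needing care is the regularity and orientation conventions in the Eliashberg--Thurston perturbation. It is classically stated for $C^2$ foliations, and extensions to $C^0$ foliations (Kazez--Roberts, Bowden) are needed if taut foliations are allowed to be merely $C^0$. We also need the perturbed contact structure to carry the orientation for which ``real Euler class'' in Theorem~\ref{thm:Liu-contact} is measured. If Liu's statement only concerns positive contact structures, the positive perturbation is chosen. Otherwise, reversing the orientation of $\tilde{M}$ negates the class, and the dual unit ball is symmetric, so this is harmless. Everything else is formal.
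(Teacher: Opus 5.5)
Your argument is the same as the paper's: approximate the taut foliation by a positive contact structure via Eliashberg--Thurston (with Bowden or Kazez--Roberts in the $C^0$ case), use homotopy invariance of the Euler class, and contradict Theorem~\ref{thm:Liu-contact}. One correction: $\tilde{M}\times[-1,1]$ has boundary $\tilde{M}\sqcup(-\tilde{M})$, carrying $\xi_+$ and $\xi_-$, so it only shows that $\xi_+$ is weakly symplectically \emph{semi}-fillable. To get a filling with $\partial W=\tilde{M}$, you need the theorem of Eliashberg and Etnyre that semi-fillable implies fillable, which the paper cites at exactly this point. Also, your orientation-reversal fallback is unnecessary, since the positive perturbation $\xi_+$ always works.
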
 

	In \cite{yazdi2020thurston}, the author asked if a virtual version of the Euler class one conjecture holds. See Question \ref{que:virtual-taut foliation}. Liu \cite{liu2024criterion} gave a criterion in terms of Alexander polynomials that, when satisfied, it gives a positive answer to the virtual Euler class one conjecture. 
	
	\begin{thm}[Liu]
		Let $M$ be a closed oriented hyperbolic 3-manifold. Denote by $B_x$ the unit ball of the Thurston norm of $M$, and by $B_{x^*}$ the unit ball of the dual norm. Let $F \subset \partial B_{x^*}$ and $F^\wedge \subset \partial B_{x}$ be a dual pair of closed faces. Suppose that $\psi \in H^1(M ; \mathbb{Z}) $ is a primitive cohomology class such that the Poincar\'{e} dual of $\psi$ lies in the cone over the interior of $F^\wedge$. 
		If the Alexander polynomial\index{Alexander polynomial} $\Delta_M^{\psi}(t)$ does not vanish, then for any rational point $w$ in $F$, there exists some finite cyclic cover $\tilde{M}$ of $M$ dual to $\psi$, such that the pullback of $w$ to $\tilde{M}$ is the real Euler class of some taut foliation on $\tilde{M}$.
		\label{thm: Liu-virtual-taut foliation}
	\end{thm}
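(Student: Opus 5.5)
The plan is to combine Gabai's theorem with a construction in the infinite cyclic cover, using non-vanishing of the Alexander polynomial to make the fibre-like surface carry the Euler class we want. I would first understand the foliations whose Euler class can lie in the \emph{interior} of the face $F$, not just at a vertex. Let $S$ be a norm-minimising surface Poincar\'e dual to $\psi$. Since $\psi$ lies in the cone over the interior of $F^\wedge$, every point $w \in F$ satisfies $\langle w, [S]\rangle = x([S]) = -\chi(S)$. So any taut foliation having $S$ as a compact leaf has Euler class in $F$. The remaining freedom is the value of the Euler class on classes in $H_2$ transverse to $\psi$, and this is what we must control.

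Next, cut $M$ along $S$ to obtain a sutured manifold $M_S$ with boundary $S_+ \cup S_-$. The hypothesis $\Delta^{\psi}_M(t) \neq 0$ means the Alexander module of the infinite cyclic cover dual to $\psi$ is torsion. Equivalently, the rational homology of $M_S$ behaves like that of $S \times I$ up to finitely many defects: the maps $H_1(S_\pm;\mathbb{Q}) \to H_1(M_S;\mathbb{Q})$ are injective and have cokernels of equal dimension, so $M_S$ is a rational homology product. In the $n$-fold cyclic cover $\tilde{M}_n$ dual to $\psi$, stack $n$ copies of $M_S$. The pullback $\tilde{w}$ of $w$ is rational, and after choosing $n$ divisible enough, $\tilde{w}$ becomes an integral class satisfying the parity condition. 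Moreover its restriction to each copy of $M_S$ is prescribed by $w$.

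The core step is a relative realisation. On a taut sutured manifold, Gabai's sutured hierarchy produces taut foliations that are tangent to $S_\pm$. I would like to show that the relative Euler class of such foliations, relative to a fixed section along $\partial M_S$, can be varied over all lattice points of a relative dual norm polytope, then glue the pieces back. Gluing $n$ copies lets us distribute the required relative Euler classes among the pieces. This is where the divisibility coming from $n$ and the rational-homology-product property enter: the periodicity and torsion of the Alexander module give exactly the rational splitting $H^2(\tilde{M}_n) \approx \bigoplus (\text{pieces})$ needed, up to finite index. The point $w$ is then a convex combination of vertices of the relative face, and we would realise each vertex by a vertex-type foliation in a separate copy, using the argument of Theorem~\ref{thm:Gabai-euler class one for vertices}. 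Because Euler classes are additive under gluing along the leaves $\tilde{S}$, the total Euler class equals $n$ times the average, which is $\tilde{w}$.

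I expect the main obstacle to be this distribution step. A given vertex of $F$ pulled back to one copy of $M_S$ need not extend to a foliation with the prescribed relative Euler class on that piece. Handling it requires checking that the relative dual norm polytope of $M_S$ surjects onto $F$, via the decomposition of $S$ into fully marked pieces. It also requires that relative classes which are not controlled by homology are killed by passing to the cover; I would try to derive this from the torsion condition $\Delta^{\psi}_M \neq 0$.
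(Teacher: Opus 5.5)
Your stacking of copies of $M_S$ in a cyclic cover is essentially Liu's medley construction, which the paper also uses. As written, though, the argument has two genuine gaps.

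\textbf{First gap: the gluing only controls the Euler class up to $\ker p!$.} The decisive problem is the sentence ``the total Euler class equals $n$ times the average, which is $\tilde w$.'' Additivity along the leaves $\tilde S$ only computes $e(\tilde{\mathcal F})$ on preimages $p^{-1}(T)$ of surfaces $T\subset M$. In other words, it only gives $p!(e(\tilde{\mathcal F}))=n\,w$ for the Umkehr map $p!$.
\begin{itemize}
\item Since $p!(p^*w)=n\,w$ as well, you only learn that $e(\tilde{\mathcal F})-p^*w\in\ker p!$. This subspace has dimension $b_1(\tilde M_n)-b_1(M)$.
\item Nothing controls the Euler class on classes in $\ker\bigl(p_*\colon H_2(\tilde M_n;\mathbb R)\to H_2(M;\mathbb R)\bigr)$, for example surfaces crossing several differently foliated copies of $M_S$. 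There is no splitting of $H^2(\tilde M_n)$ into contributions of the pieces.
\end{itemize}
This is exactly where $\Delta^\psi_M(t)\neq 0$ enters in the paper. It makes $H_1$ of the infinite cyclic cover finite-dimensional over $\mathbb Q$, so every finite cyclic cover dual to $\psi$ has $b_1\le \deg\Delta^\psi_M+1$. The paper then proceeds as follows.
\begin{itemize}
\item Pass to a cyclic cover $M'$ of maximal $b_1$, and replace $w$ by its pullback $w'$. When $w\in\mathrm{int}\,F$, $w'$ lies in the interior of the minimal face $F'$ containing the pullback of $F$.
\item Run the medley construction over $M'$ using the vertices of $F'$.
\item Any further cyclic cover $\tilde M\to M'$ dual to $\psi'/d$ is again a cyclic cover of $M$ dual to $\psi$. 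Hence $b_1(\tilde M)=b_1(M')$ and $p!$ is an isomorphism, so $p!(e)=[\tilde M:M']\,w'$ forces $e=p^*w'$.
\end{itemize}
Your reading of the hypothesis is also wrong. $\Delta^\psi_M\neq 0$ says that $i_+-t\,i_-$ is nondegenerate over $\mathbb Q(t)$. That does not make $i_\pm$ injective, let alone make $M_S$ a rational homology product, which amounts to $\det(i_+-t\,i_-)$ having nonzero constant and top coefficients. Likewise, integrality of $p^*w$ does not follow from choosing $n$ divisible, since pullback does not rescale; it is an output of the argument, not an input.

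\textbf{Second gap: the foliations you glue do not exist as claimed.} To stack copies you need, for each vertex $v_i$ of the relevant face, a taut foliation with Euler class $v_i$ having a surface homologous to $S$ as a union of compact leaves. Gabai's vertex theorem does not give this. Its compact leaf lies in the cone over the interior of the top-dimensional face dual to $v_i$, which does not contain $[S]$ unless $F$ is a vertex.

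Your proposed remedy is to show that relative Euler classes of taut foliations on the sutured manifold $M_S$ fill out all lattice points of a relative dual polytope. That is an unproven relative realisation statement of the same nature as the Euler class one conjecture, which is false in general.

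The paper uses the Fully Marked Surface Theorem instead. Since $\langle v_i,[S]\rangle=x([S])$, $S$ is algebraically fully marked for $\mathcal F_i$. So $\mathcal F_i$ can be replaced by a taut foliation with homotopic tangent plane field, hence the same Euler class, that has a surface $S_i$ homologous to $S$ as compact leaves. The $S_i$ need not be isotopic to one another, and Liu's full argument handles that too.
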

	
	As a corollary, Liu gives examples of hyperbolic 3-manifolds with first Betti numbers 2 and 3 respectively such that every \emph{rational} point on the boundary of the unit ball of the dual Thurston norm is virtually realised as the real Euler class of a taut foliation.
	
	\subsection{Outline} 
	
	In Section \ref{sec: taut foliations} we review the background on taut foliations and their Euler classes. In Section \ref{sec: fully marked} we discuss the fully marked surface theorem briefly, and show with an example why in general replacing the foliation preserving the homotopy class of its tangent plane field is necessary. Section \ref{sec: contact structures} discusses contact structures, and the analogue of Inequality (\ref{eq:Thurston inequality}) in this context due to Eliashberg. We also briefly mention some of the tools developed by Liu in his proof of Theorem \ref{thm:Liu-contact}.  In Section \ref{sec: flows}, pseudo-Anosov flows and quasigeodesics flows are reviewed. In Section \ref{sec: actions on cricle}, we talk about group actions on the circle, and the Milnor--Wood inequality, as well as its reformulation in terms of bounded cohomology. Section \ref{sec: virtual} discusses a virtual version of the Euler class one conjecture, and gives a sketch of Liu's proof of Theorem \ref{thm: Liu-virtual-taut foliation}. 
	
	An important topic that is not discussed here is the \emph{adjunction inequality}. Perhaps a satisfactory theory that unifies Thurston's inequality (\ref{eq:Thurston inequality}) and Eliashberg's inequality (Theorem \ref{thm:eliashberg-inequality}) would be through some form of the adjunction inequality, but such a theory is not developed yet. The book \cite{ozbagci-stipsicz} is an excellent reference for this topic. 
	
	\section{Acknowledgment} I would like to thank Alessandro Cigna for helpful comments on this chapter. 
	
	\section{Taut foliations}
	\label{sec: taut foliations}
	
	\subsection{Foliations}
	A codimension-one \emph{foliation}\index{foliation} of a closed 3-manifold $M$ is a decomposition of $M$ into injectively immersed surfaces such that locally it has the product form $\mathbb{R}^2 \times \mathbb{R}$ by surfaces $\mathbb{R}^2 \times \{ \text{point}\}$. The connected components of the surfaces in the decomposition are called the \emph{leaves}\index{foliation!leaf of} of the foliation. When $M$ has non-empty boundary, we often require that for each boundary component $T$ of $M$ the foliation is either transverse to $T$ or has $T$ as a leaf. 
	
	\begin{thm}[Reeb stability theorem \cite{reeb1952certaines}]\index{Reeb stability}
		Let $\mathcal{F}$ be a transversely orientable codimension-one foliation of a compact manifold. Assume that $\mathcal{F}$ has a leaf $L$ with $\pi_1(L)$ finite. Then either $M$ is $L \times [0,1]$ and $\mathcal{F}$ is the product foliation, or $M$ fibers over $S^1$ with fiber $L$ and $\mathcal{F}$ is the fibration. 
	\end{thm}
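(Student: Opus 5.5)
We prove the Reeb stability theorem by first establishing the local statement: a compact leaf with finite fundamental group has a saturated product neighbourhood. Then we spread this globally using connectedness of $M$.

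\textbf{Step 1 (holonomy is trivial).} Fix a transverse orientation and a small transversal arc $\tau$ through a point $p\in L$. Holonomy gives a homomorphism from $\pi_1(L,p)$ to the group of germs at $p$ of orientation-preserving homeomorphisms of $\tau$ fixing $p$. The image is a finite group, since $\pi_1(L)$ is finite. A nontrivial orientation-preserving germ of a homeomorphism of an interval fixing a point has infinite order: if $h(x)\ne x$ for points $x$ arbitrarily close to $p$, say $h(x)>x$, then the iterates $h^n(x)$ are strictly increasing and never return to $x$. So the image is trivial and $L$ has trivial holonomy. This is where transverse orientability is used. Also, $L$ is compact: if $M$ is closed every leaf with finite $\pi_1$ is assumed compact in the classical statement, and I would add or verify this hypothesis as part of the setup.

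\textbf{Step 2 (local product structure).} Cover $L$ by finitely many foliation charts. Since the holonomy along every loop is trivial, leaf paths starting at $p$ lift to all nearby leaves, and the lifts close up. We use this to build a map $L\times(-\epsilon,\epsilon)\to M$ that is a homeomorphism onto a saturated neighbourhood $U$ of $L$ and sends $L\times\{t\}$ to leaves. Concretely, we identify each nearby leaf near $L$ with $L$ via the transversal fibration of a tubular neighbourhood. This is the standard local Reeb stability argument, and it is the main technical step. The delicate point is the uniformity needed so that a single $\epsilon$ works for all of $L$, which follows from compactness of $L$. Consequently, every leaf near $L$ is a compact leaf diffeomorphic to $L$, and in particular also has finite $\pi_1$.

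\textbf{Step 3 (globalisation).} Let $W$ be the union of all compact leaves diffeomorphic to $L$ with finite $\pi_1$. By Step 2, applied to each such leaf, $W$ is open. For closedness, take a limit point $q$ of $W$ with leaf $L_q$. A foliation chart around $q$ meets leaves in $W$ in plaques. Since these leaves are all diffeomorphic to $L$ with a uniform bound on area coming from the product structure, a limiting argument shows that $L_q$ is compact with a finite cover by $L$. Hence $\pi_1(L_q)$ is finite. The boundary case, where $L_q$ is a boundary leaf, is handled similarly. So $W$ is all of $M$, or all of $M$ minus boundary leaves.

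\textbf{Step 4 (conclusion).} The leaf space of $M$ is then a connected 1-manifold, possibly with boundary. The cases are:
\begin{itemize}
\item If the leaf space is a circle, $M$ fibres over $S^1$ with fibre $L$ and $\mathcal F$ is the fibration.
\item If the leaf space is an interval, boundary leaves occur, and we get $M\cong L\times[0,1]$ with the product foliation.
\end{itemize}
The main obstacle is the closedness in Step 3. I would control it by noting that a limit leaf has trivial holonomy on the finite-index level, and then applying Step 2 to a finite cover.
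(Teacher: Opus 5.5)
The paper quotes this theorem from Reeb without proof, so your outline is judged on its own. Steps 1, 2 and 4 are the standard local-stability and fibration arguments and are fine. You do need to add compactness of $L$ (and connectedness of $M$) as hypotheses; they cannot be ``verified'', since the Reeb foliation of $S^3$ has simply connected planar leaves and is not a fibration. The paper's statement implicitly means a compact leaf.

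\textbf{The gap is Step 3}, which is the real content of the global theorem. No uniform area bound comes from the product structure. Step 2 gives each leaf $L'\subset W$ a neighbourhood $L'\times(-\epsilon_{L'},\epsilon_{L'})$, but $\epsilon_{L'}$ depends on $L'$ and may shrink to $0$ as $L'$ approaches $q$. So nothing controls the areas of the leaves of $W$ accumulating at $q$; such a bound is essentially equivalent to the compactness of $L_q$ that you want.

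Your fallback is circular. Step 2 takes a \emph{compact} leaf as input, so it cannot be applied to $L_q$ before $L_q$ is known to be compact. Trivial or finite holonomy never yields compactness on its own: the planar leaves of a Reeb component have trivial holonomy.

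There is also a small mismatch. A finite covering $L\to L_q$ does not give $L_q\cong L$, so $W$ should be the union of compact leaves with finite $\pi_1$. The diffeomorphism type is locally constant by Step 2, hence constant at the end.

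\textbf{The missing idea} is to use the transverse orientation a second time, flowing a fixed leaf out to the frontier along a transverse vector field.

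Let $U$ be the component of $W$ containing $L$. Its leaf space is a connected $1$-manifold. If it is compact, then $U$ is open and closed, so $U=M$. Otherwise, near an open end, $U\cong L\times(0,1)$ with the product foliation. Let $t$ be the second coordinate, and let $X$ be a vector field positively transverse to $\mathcal{F}$, with flow $\phi_s$, so that $t$ increases along $X$-orbits (use $-X$ at the other end).

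Take a foliation chart $D\times(-1,1)$ at a frontier point; $U$ appears there as $D\times O$. Every component of $O$ with both endpoints outside $O$ is mapped by $t$ onto $(0,1)$. Hence it is crossed by the compact leaf $L\times\{1/2\}$, which meets a relatively compact chart in only finitely many plaques. So near the frontier plaque, $U$ occupies only the slab just below and/or just above it, and points with $t$ near $1$ lie just below it.

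From this you can show the following for $x\in L\times\{1/2\}$:
\begin{enumerate}
\item the forward orbit leaves $U$ at a finite time $T(x)$;
\item $T$ is continuous;
\item $\Psi(x)=\phi_{T(x)}(x)$ is a local diffeomorphism of $L$ into a single frontier leaf $L'$.
\end{enumerate}
Then $\Psi(L)$ is open and closed in $L'$, so $L'$ is compact, $\Psi$ is a finite covering, and $\pi_1(L')$ is finite. Step 2 at $L'$ then gives a neighbourhood of $L'$ inside $W$ meeting $U$. This contradicts $L'\cap U=\emptyset$, so $U=M$ and your Step 4 applies.
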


	For example, if $\mathcal{F}$ is a transversely orientable foliation of a compact orientable 3-manifold that has a leaf diffeomorphic to the 2-sphere $S^2$ (respectively 2-disc $D^2$) then $M$ is diffeomorphic to $S^2 \times S^1$ (respectively $D^2 \times S^1$, assuming further that $\mathcal{F}$ is transverse to the boundary) with the product foliation.
	
	\subsection{Suspension construction}
	
	\index{suspension!construction}Let $B$ and $F$ be manifolds, where $B$ is connected, and $b_0$ be a base point in $B$. Let $\rho \colon \pi_1(B,b_0) \rightarrow \mathrm{Homeo}(F)$ be a homomorphism. Then $\rho$ defines a bundle with total space $E$, base $B$, and fiber $F$, and a foliation $\mathcal{F}$ of $E$ transverse to the fibers as follows. Let $\tilde{B}$ be the universal cover of $B$, and consider $\tilde{B} \times F$ with the product foliation whose leaves are $\tilde{B} \times \{ \text{point} \}$. Let
	\[ E : = (\tilde{B} \times F)/ \pi_1(B , b_0). \]
	Here the action of $\pi_1(B , b_0)$ on $\tilde{B} \times F$ is defined as 
	\[ \gamma \cdot (\tilde{b} , f) : = (\gamma \cdot \tilde{b}, \rho(\gamma)(f) ), \]
	where the action on the first factor is by covering transformations. The fibration of $\tilde{B} \times F$ by fibers $F$ induces a fibration on $E$ since the action of $\pi_1(B , b_0)$ preserves the set of fibers. Moreover, the product foliation on $\tilde{B} \times F$ descends to a foliation $\mathcal{F}$ on $E$ transverse to the fibers, since the action preserves the leaves of the product foliation on $\tilde{B} \times F$. Each leaf of the foliation $\mathcal{F}$ is a covering of $B$, since it is a quotient of $\tilde{B}$. The foliation $\mathcal{F} = \mathcal{F}_\rho$ constructed above is called the \emph{suspension foliation associated with $\rho$}\index{foliation!suspension foliation}\index{suspension!foliation}.
	
	\subsection{Taut foliations}
	A transversely orientable codimension-one foliation is called \emph{taut}\index{foliation!taut}\index{taut!foliation} if for every point $p \in M$ there is a closed loop $\gamma_p \colon S^1 \rightarrow M$ that passes through $p$ and is transverse to the foliation at each point. Being \emph{transverse} means that every point $q$ in the image of $\gamma_p$ has a neighbourhood $U $ homeomorphic to $(-1, 1)^2 \times (-1, 1)$ foliated by copies of $(-1, 1)^2 \times \{ \text{point} \}$ such that the image of $\gamma_p$ in $U$ is the arc $\{ (0, 0) \} \times (-1, 1)$.

	\subsection{Relative Euler class}
	Let $\mathcal{F}$ be a transversely oriented foliation of a compact oriented 3-manifold such that each component $T$ of $\partial M$ is either a leaf of $\mathcal{F}$ or is transverse to $\mathcal{F}$. Assume that $\partial M$ is a union of tori; this is automatic for any component of $\partial M$ that is transverse to $\mathcal{F}$. In this case there is a well-defined relative Euler class\index{relative Euler class!of a foliation} $e(\mathcal{F}) \in H^2(M , \partial M)$ for the oriented tangent plane bundle to the foliation. To define a relative Euler class we need to define a section (or trivialisation) of $T \mathcal{F}$ on $\partial M$, where we allow $\partial M$ to be empty. Fix a Riemannian metric on $M$. First assume that $\mathcal{F}$ is transverse to $\partial M$: in this case consider the section defined on $\partial M$ that lies inside $T \mathcal{F} \cap T(\partial M)$, has unit length, and whose orientation is the boundary orientation induced from leaves of $\mathcal{F}$. This defines the relative Euler class when $\mathcal{F}$ is transverse to $\partial M$. When some components of $\partial M$ are leaves of $\mathcal{F}$, there is no canonical section of $T \mathcal{F}$ on $\partial M$. However, for each torus component $T$ of $\partial M$ that is a leaf of $\mathcal{F}$, we can consider an identification $T \cong S^1 \times S^1$ which gives a trivialisation of the tangent bundle of $T$ as $TS^1 \times TS^1$. It can be shown that different identifications $T \cong S^1 \times S^1$ give rise to the same trivialisation of the tangent bundle of $T$ up to homotopy, in turn allowing us to define a relative Euler class.  See \cite[Section 3]{yazdi2020thurston} for the details. 

	\subsection{Roussarie--Thurston general position}\index{Roussarie--Thurston general position}
	Let $\mathcal{F}$ be a transversely oriented foliation of a compact oriented 3-manifold such that each component $T$ of $\partial M$ is either a leaf of $\mathcal{F}$ or is transverse to $\mathcal{F}$. Let $S$ be a connected compact properly embedded orientable surface in $M$. Assume that each component of $\partial S$ is either transverse to $\mathcal{F}$ or lies in a leaf of $\mathcal{F}$. Then a general position argument shows that $S$ can be isotoped such that it is transverse to $\mathcal{F}$ except at finitely many points of tangencies that are either centers or saddles. Roussarie \cite{roussarie1974plongements} and Thurston \cite{thurston1986norm} showed that if $\mathcal{F}$ is Reebless and $S$ is incompressible and boundary-incompressible then $S$ can be isotoped such that it is transverse to the foliation except at finitely many points of saddle and circle tangencies. Thurston showed \cite{thurston1986norm} that if $\mathcal{F}$ is taut and $S$ is as before, then $S$ can be isotoped such that it is either a leaf of $\mathcal{F}$ or it is transverse to $\mathcal{F}$ except at finitely many points of saddle tangencies. See Candel and Conlon \cite{candel2003foliations} for a proof of Roussarie general position, or \cite{gabai2000combinatorial} for a proof of a generalisation of Thurston's general position for immersed incompressible surfaces in $C^0$ taut foliations. 
	
	\subsection{Thurston's inequality}
	
	Let $\mathcal{F}$ be a transversely oriented foliation of a compact oriented 3-manifold such that each component $T$ of $\partial M$ is either a leaf of $\mathcal{F}$ or is transverse to $\mathcal{F}$. Let $S$ be a connected compact properly embedded oriented surface in $M$. Then $e(\mathcal{F})$ associates a number to the homology class $[S] \in H_2(M , \partial M)$ of $S$, which we denote by $\langle e(\mathcal{F}) , [S] \rangle$. Thurston observed that this number is equal to $\chi(S)$ if $S$ is a leaf of $M$ whose positive normal vector agrees with the transverse orientation of $\mathcal{F}$. Moreover, he proved an index sum formula for the value of $\langle e(\mathcal{F}) , [S] \rangle$ when $S$ is transverse to $\mathcal{F}$ except at finitely many points of center, saddle, or circle tangencies. Given an isolated point $s$ of tangency between $S$ and $\mathcal{F}$, define the index $i(s)$ of $s$ as $+1$ if $s$ is a center tangency, and $-1$ if $s$ is a saddle tangency. We say that $s$ is of \emph{positive type} if the transverse orientations of $S$ and $\mathcal{F}$ at $s$ agree with each other, and of \emph{negative type} if they disagree. Denote 
	\[ I_P :=  \sum_{s \text{ of positive type}} i(s), \]
	and 
	\[ I_N := \sum_{s \text{ of negative type}} i(s). \]
	
	Thurston showed that the following index sum formula holds: 
	\begin{align}
		\langle e(\mathcal{F}) , [S] \rangle = I_P - I_N.
	\end{align}
	In particular, circle tangencies do not contribute to the value of $\langle e(\mathcal{F}) , [S] \rangle$. This index sum formula is obtained by defining an explicit section of $T\mathcal{F} |S$ on the complement of the points of tangency between $\mathcal{F}$ and $S$, and then calculating the local obstructions for extending the section over the singular points. The value $\langle e(\mathcal{F}) , [S] \rangle$ is then equal to the sum of the values of local obstruction. 
	
	Moreover, by the Poincar\'{e}--Hopf formula\index{Poincar\'{e}--Hopf formula} for the induced singular foliation $\mathcal{F}|S$ on $S$ we have 
	\begin{align}
		\chi(S) = I_P + I_N.
		\label{eq: Poincare-Hopf}
	\end{align}

	Now assume that $\mathcal{F}$ is taut and $S$ is incompressible and boundary-incompressible with $\chi(S) \leq 0$, and that each component of $\partial S$ is either transverse to $\mathcal{F}$ or lies in a leaf of $\mathcal{F}$. By Roussarie--Thurston general position, we can isotope $S$ such that each component of $S$ is either a leaf or it is transverse to the foliation except at finitely many saddle points of tangency. If $S$ is a leaf of $\mathcal{F}$ and the transverse orientations of $S$ and $\mathcal{F}$ agree, then $\langle e(\mathcal{F}) , [S] \rangle = \chi(S)$, since the restriction of $T \mathcal{F}$ to $S$ gives the tangent bundle of $S$, and by Hopf's theorem we have $\langle e(TS), [S] \rangle = \chi(S)$. So assume that $S$ is transverse to the foliation except at finitely many saddle points of tangency. In this case, $i(s)=-1$ for each tangency point, and so we have 
	\[ |\langle e(\mathcal{F}) , [S] \rangle | = |I_P- I_N| \leq |I_P + I_N| = |\chi(S)|. \]
	In the above we used the fact that each term in the sum $I_P + I_N$ is $-1$, and so switching some of the terms to $+1$ instead decreases the sum in absolute value. This completes Thurston's proof of Inequality (\ref{eq:Thurston inequality}). Note that by the Poincar\'{e}--Hopf formula (\ref{eq: Poincare-Hopf}) if every point of tangency between $S$ and $\mathcal{F}$ is either a center or a saddle, then there are at least $|\chi(S)|$ tangencies, with equality if and only if all points of tangencies are saddles. By Roussarie--Thurston general position, when $S$ is incompressible and boundary-incompressible and $\mathcal{F}$ is taut, the surface $S$ can be isotoped such that the number of tangencies is exactly this minimum number $|\chi(S)|$. 
	
	\subsection{Finite depth foliations}
	
	A leaf $L$ of a foliation $\mathcal{F}$ of a manifold $M$ is of \emph{depth 0}\index{leaf!depth of}\index{depth!of a leaf} if it is compact. A leaf $L$ is of depth $(k+1)$ if it is not of depth at most $k$ and the limit points of $L$ (as a subspace of $M$) are a union of leaves of depth at most $k$. The \emph{depth of a foliation}\index{foliation!depth of}\index{depth!of a foliation} is the smallest number $k \in \mathbb{N}$ such that every leaf of $\mathcal{F}$ is of depth at most $k$, and it is equal to $+ \infty$ if no such number exists. In the former case we say that the foliation is of \emph{finite depth}\index{depth!finite}\index{foliation!finite depth}. 
	For example, in the Reeb foliation of the solid torus, the boundary torus is of depth $0$ and every other leaf is of depth $1$.  

	\begin{remark}
		Note that if $\mathcal{F}$ is a finite depth taut foliation of a finite-volume hyperbolic 3-manifold $M$ then the relative Euler class $e(\mathcal{F})$ of $M$ has dual Thurston norm exactly one. This is because by Thurston's inequality (\ref{eq:Thurston inequality}) the dual norm of $e(\mathcal{F})$ is at most one. Moreover, every compact leaf $S$ of $\mathcal{F}$ satisfies $ \langle e(\mathcal{F}) , [S] \rangle  = \chi(S)$. By Novikov's theorem $S$ is incompressible, and since $M$ is hyperbolic we must have $\chi(S)<0$. Therefore, the dual norm of $e(\mathcal{F})$ is equal to one. 
		
		This shows that some familiar foliations are not of finite depth. For example, if $\mathcal{F}$ is the weak stable (or unstable) foliation\index{foliation!weak (un)stable} of an Anosov flow\index{flow!Anosov} on a closed orientable hyperbolic 3-manifold, then the flow direction defines a non-zero section of $T \mathcal{F}$, and so $e(\mathcal{F}) = 0 \in H^2(M ; \mathbb{Z})$ whenever $\mathcal{F}$ is transversely orientable. Therefore, $\mathcal{F}$ is not of finite depth. Note that $\mathcal{F}$ is taut since every leaf of $\mathcal{F}$ is homeomorphic to either an open annulus or a plane, in particular all leaves are non-compact. 
	\end{remark}

	David Gabai \cite{gabai1983foliations} constructed finite depth taut foliations on a large class of 3-manifolds. 

	\begin{thm}[Gabai]
		Let $M$ be a compact orientable irreducible boundary-irreducible 3-manifold with $H_2(M , \partial M ; \mathbb{R}) \neq 0$. Let $S$ be a compact orientable surface properly embedded in $M$ such that $S$ is incompressible and Thurston norm-minimising in its homology class and $\partial S$ is coherently oriented. There is a finite depth taut foliation $\mathcal{F}$ on $M$ such that $S$ is a union of compact leaves of $\mathcal{F}$.
		\label{thm:norm-minimising surfaces are leaves}
	\end{thm}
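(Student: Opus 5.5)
The plan follows Gabai's sutured manifold hierarchy argument. First reduce to the case where $S$ has no redundant components. Discard sphere and disc components; they cannot occur in a norm-minimising incompressible surface in an irreducible, boundary-irreducible manifold unless they are inessential. If several components of $S$ are parallel, keep one representative of each parallel family. Remove any subcollection that separates $M$ with coherent orientations, i.e.\ one that is null-homologous after reversal. This leaves each remaining component nonseparating in a suitable sense. Now cut $M$ along $S$ to obtain $M_0 = M \setminus \mathring{N}(S)$. Give it the sutured manifold structure $(M_0,\gamma_0)$ with $R_+(\gamma_0)$ and $R_-(\gamma_0)$ the two copies $S_\pm$ of $S$. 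The sutures are annuli on the toral components of $\partial M$ between the curves of $\partial S$. The coherent orientation of $\partial S$ is exactly what makes these $R_\pm$ assignments consistent along each boundary torus; toral components of $\partial M$ missed by $\partial S$ become toral sutures. Norm-minimality of $S$ together with irreducibility of $M$ shows that $(M_0,\gamma_0)$ is taut. That is, $M_0$ is irreducible and $R_\pm$ are incompressible and norm-minimising in $H_2(M_0,\gamma_0)$. The latter holds because a surface of smaller $\chi_-$ with the same boundary could be glued back to give a surface in $M$ homologous to $S$ with smaller norm.

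The second step, which I expect to be the main obstacle, is to produce a sutured manifold hierarchy
\[ (M_0,\gamma_0) \xrightarrow{S_1} (M_1,\gamma_1) \xrightarrow{S_2} \cdots \xrightarrow{S_n} (M_n,\gamma_n) \]
in which every decomposition is taut and $(M_n,\gamma_n)$ is a union of product sutured manifolds $R\times I$. The difficulty lies in choosing each decomposing surface $S_i$ so that tautness is preserved and some complexity strictly decreases. I would choose $S_i$ as a norm-minimising representative of a nonzero class in $H_2(M_{i-1},\partial M_{i-1})$, which exists as long as $M_{i-1}$ is not a product. I would arrange its boundary to meet the sutures efficiently, double-curve-summing with copies of $R_\pm$ and product discs as needed, so that the result is \emph{well-groomed}. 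This follows Gabai's lemma that a taut sutured manifold with nonzero second homology admits a taut decomposition along a surface whose boundary has no inessential arcs. The complexity (genus of $\partial M_i$, number of sutures, a lexicographic ordering) must be shown to decrease, giving termination. This bookkeeping is the heart of the argument, and I would follow Gabai's complexity function essentially verbatim.

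Finally, build the foliation inductively backwards up the hierarchy. On each product piece $R\times I$, take the product foliation by $R\times\{t\}$. This is taut, with $R_\pm$ as leaves and transverse to the sutures. For the inductive step, suppose $(M_{i-1},\gamma_{i-1})\xrightarrow{S_i}(M_i,\gamma_i)$ is a taut decomposition and $\mathcal{F}_i$ is a finite depth taut foliation of $M_i$ with $R(\gamma_i)$ as leaves and transverse to $\gamma_i$. Glue the two copies of $S_i$ back together and spiral the leaves near $S_i \cap R(\gamma_{i-1})$ around the old leaves. This is Gabai's spiralling construction along the well-groomed boundary curves. It produces a foliation $\mathcal{F}_{i-1}$ on $M_{i-1}$ whose depth increases by at most one and which keeps $R(\gamma_{i-1})$ as leaves. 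Tautness is preserved because every leaf still meets a closed transversal or an arc from $R_-$ to $R_+$; such arcs exist in each product piece and are concatenated across the decompositions. At the last step we reach $(M_0,\gamma_0)$ and then reglue $S_+$ to $S_-$. The leaves $S_\pm$ match up to give $S$ as a union of compact leaves. Closed transversals through every point are obtained from the $R_-\to R_+$ arcs closing up through $S$, so the resulting foliation $\mathcal{F}$ of $M$ is taut, of finite depth, and contains $S$.
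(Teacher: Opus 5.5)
Your route is the paper's. The paper gets this statement in one line by applying Theorem~\ref{thm: taut sutured manifolds admit taut foliations} to the sutured manifold $M\setminus\setminus S$, which is your first step (tautness of $(M_0,\gamma_0)$) followed by regluing $S_+$ to $S_-$. Your hierarchy and spiralling paragraphs sketch Gabai's proof of that theorem, which the paper simply quotes, so deferring to Gabai there is fine.

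The step that does not work as written is the preliminary reduction. The conclusion concerns every component of $S$. After discarding components, your foliation has only the retained ones as leaves, and nothing puts the others back.

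\emph{Parallel copies.} Parallel, coherently oriented copies should not be discarded at all. The corresponding components of $M\setminus\setminus S$ are product sutured manifolds $S_j\times I$, which carry the product foliation, so keeping them costs nothing.

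\emph{Null-homologous subcollections.} A null-homologous subcollection $S'\subset S$ can never be restored, for the following reasons.
\begin{enumerate}
\item Norm-minimality gives $\chi_-(S')=0$.
\item Coherence of $\partial S$ gives $\partial S'=\emptyset$, since a nonempty family of coherently oriented essential curves on a boundary torus is not null-homologous. So $S'$ is a union of closed incompressible tori.
\item A closed transversal crosses leaves only positively, yet has zero algebraic intersection with $S'$. So it never meets $S'$.
\end{enumerate}
Hence no taut foliation contains $S'$ as a union of leaves whose transverse orientation agrees with that of $S$. This case, like sphere or disc components, must be excluded by hypothesis rather than reduced away. It is vacuous when $M$ is closed and atoroidal, but not in general: for instance, a boundary-parallel torus component is null-homologous in $H_2(M,\partial M)$.

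This is also what your final step silently needs. When such an $S'$ is present, the $R_-\to R_+$ arcs can never be closed up through $S'$ into closed transversals.
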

	
	In order to construct taut foliations, Gabai \cite{gabai1983foliations} introduced the notion of sutured manifolds. These are manifolds with extra data on their boundary that restrict the way a foliation is allowed to intersect the boundary. 
	
	\begin{definition}[Sutured manifold]
		A \emph{sutured manifold}\index{sutured manifold} $(M, \gamma)$ is a compact oriented 3-manifold $M$ together with a set of $\gamma \subset \partial M$ of pairwise disjoint annuli $A(\gamma)$ and tori $T(\gamma)$. Every component of $\partial M - \gamma$ is oriented. We denote by $R_+(\gamma)$ (respectively $R_-(\gamma)$) the union of components of $\partial M - \gamma$ whose normal vector points out of (respectively into) $M$. Each annulus component of $\gamma$ must be adjacent to both $R_+(\gamma)$ and $R_-(\gamma)$. 
		
		A sutured manifold is \emph{taut}\index{sutured manifold!taut}\index{taut!sutured manifold} if $M$ is irreducible and $R(\gamma)$ is norm-minimising in $H_2(M , \gamma)$. 
	\end{definition}

For example if $X$ is the exterior of a regular neighbourhood of a knot $K$ in the 3-sphere, and $S$ is an oriented Seifert surface for $K$ viewed as a surface in $X$, then the manifold $M = X \setminus \setminus S$ obtained by cutting $X$ along $S$ admits the structure of a sutured manifold. Here $R_+(\gamma)$ and $R_-(\gamma)$ are the two copies of $S$ in $M$, and $\gamma = \partial X \setminus \setminus \partial S$ is the annulus obtained by cutting $\partial X$ along $\partial S$. 

The notion of taut foliation naturally extends to sutured manifolds \cite{gabai1983foliations}. 

\begin{definition}[Taut foliation on sutured manifolds]
	A transversely oriented codimension-one foliation $\mathcal{F}$ on a sutured manifold $(M, \gamma)$ is \emph{taut}\index{foliation!taut}\index{taut!foliation!on a sutured manifold} if $\mathcal{F}$ is transverse to $\gamma$, tangent to $R(\gamma)$ with the normal direction pointing out of (respectively into) the manifold along $R_+(\gamma)$ (respectively $R_-(\gamma)$), the induced foliation $\mathcal{F}|\gamma$ on $\gamma$ has no Reeb components (i.e. it is a suspension foliation), and each leaf of $\mathcal{F}$ intersects a transverse closed curve or properly embedded arc with endpoints on $R(\gamma)$. 
\end{definition}

	For example, continuing with the knot complement example, if $\mathcal{F}$ is a taut foliation of $X$ that has $S$ as a compact leaf and such that $\mathcal{F}|\partial X$ is a suspension foliation, then the foliation $\mathcal{G} = \mathcal{F} \setminus \setminus S$ obtained by cutting $\mathcal{F}$ along the compact leaf $S$ is a taut foliation of the sutured manifold $ X \setminus \setminus S$. 

	Thurston's theorem saying that compact leaves of taut foliations are norm-minimising naturally extends to the following \cite{gabai1983foliations}. 
	
	\begin{thm}
		If a sutured manifold $(M , \gamma)$ admits a taut foliation $\mathcal{F}$, then the sutured manifold $(M, \gamma)$ is taut, or $M = S^2 \times S^1$ or $S^2 \times [0,1] $ and $\mathcal{F}$ is the product foliation. 
	\end{thm}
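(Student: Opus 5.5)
We need to show two things for a taut foliation $\mathcal{F}$ on a sutured manifold $(M,\gamma)$ outside the exceptional cases: that $M$ is irreducible, and that $R(\gamma)$ is norm-minimising in $H_2(M,\gamma)$. The plan is to double $(M,\gamma)$ into a manifold with a taut foliation without sutures, where the classical results of Section \ref{sec: taut foliations} apply, and then transfer the conclusions back to $M$.

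\emph{Step 1: doubling.} Let $DM$ be obtained by gluing two copies of $M$ along $R(\gamma)$: glue $R_+(\gamma)$ of one copy to $R_-(\gamma)$ of the other, and vice versa. Since $\mathcal{F}$ is tangent to $R_\pm$ with the prescribed normal directions, the transverse orientations match, so we get a transversely oriented foliation $D\mathcal{F}$ of $DM$. Its boundary is the double of $\gamma$, a union of tori. On the annuli of $\gamma$, $\mathcal{F}|\gamma$ is a suspension, so the doubled boundary tori carry Reebless foliations transverse to $D\mathcal{F}$. The tori in $T(\gamma)$ double trivially. If needed, I would smooth the foliation near the corners $\partial R(\gamma)$ in the standard way.

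\emph{Step 2: tautness of the double.} Each leaf of $\mathcal{F}$ meets a transverse closed curve or a transverse arc with endpoints on $R(\gamma)$. By orientations, such an arc runs from $R_-$ to $R_+$. Concatenating it with its mirror arc in the other copy gives a closed transversal in $DM$. Leaves of $R(\gamma)$ itself also meet these curves. Hence $D\mathcal{F}$ is taut.

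\emph{Step 3: irreducibility.} Suppose $M$ contains an essential sphere. The restriction of $\mathcal{F}$ to $M$ is Reebless, since Reeb components would obstruct transversals; this is Novikov's theorem applied to $D\mathcal{F}$. So a Roussarie-type general position argument puts the sphere in normal form. With only centres and saddles, Poincaré--Hopf together with Reebless-ness forces a spherical leaf. Reeb stability then gives $M=S^2\times S^1$ or $S^2\times[0,1]$ with the product foliation, which are the listed exceptions. Otherwise the sphere bounds a ball, so $M$ is irreducible.

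\emph{Step 4: norm-minimisation.} Let $S$ be any surface in $(M,\gamma)$ homologous to $R(\gamma)$ in $H_2(M,\gamma)$. We may take $S$ incompressible with $\partial S$ in $\gamma$ and transverse to $\mathcal{F}|\gamma$. Put it in Roussarie--Thurston position relative to $\mathcal{F}$: either it is a union of leaves, or it is transverse except at saddle tangencies, with circle tangencies removable. Then apply the index sum formula with the relative Euler class, using the boundary section along $\gamma$ defined in the relative Euler class subsection. This gives $\langle e(\mathcal{F}),[S]\rangle=I_P-I_N$ and $\chi(S)=I_P+I_N$, hence $|\langle e,[S]\rangle|\le\chi_-(S)$. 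Sphere and disc components are eliminated using irreducibility and boundary-incompressibility. Since $R_+$ and $R_-$ are leaves whose orientations agree with the transverse orientation, $\langle e(\mathcal{F}),[R(\gamma)]\rangle=\chi(R(\gamma))$, and homology invariance of the pairing yields $\chi_-(R(\gamma))\le\chi_-(S)$.

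\emph{Main obstacle.} The main difficulty is Step 4. The relative Euler class pairing must be checked to depend only on the class in $H_2(M,\gamma)$; this is where the suspension condition on $\gamma$ supplies a canonical boundary section. Compressible or non-norm-minimising components must also be handled. As a fallback, I would apply Thurston's theorem on compact leaves of taut foliations directly to $DM$. That shows the doubled $R(\gamma)$ is norm-minimising in $DM$, and a symmetrization argument then pushes a competing surface in $M$ to its double.
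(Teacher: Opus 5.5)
The paper does not prove this statement. It quotes it from Gabai as the sutured analogue of Thurston's theorem that compact leaves of taut foliations are norm-minimising, and your doubling skeleton is a standard way to derive it from that theorem.

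Steps 1 and 2 are fine, provided the double uses the mirror copy $\bar M$ with reversed transverse orientation. Then $R_\pm(\gamma)$ is glued by the identity to its own mirror image, which plays the role of $R_\mp$ in $\bar M$. Gluing $R_+$ of one copy of $M$ to $R_-$ of an identical copy is not defined in general, since $R_+(\gamma)$ and $R_-(\gamma)$ need not be homeomorphic.

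\textbf{The genuine gap is Step 3.} A general-position count cannot by itself produce a spherical leaf: a round sphere in a product-foliated ball has two centres, no saddles and no spherical leaf. So everything rests on cancelling the centres of an \emph{essential} sphere, and the Roussarie--Thurston isotopies do not give you that. They cancel a centre by capping circle leaves of $\mathcal{F}|S$ with discs in leaves (via Novikov) and pushing $S$ across the resulting ball. In the standard proofs this relies on the very irreducibility you are trying to establish.

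What you need instead is the Novikov--Rosenberg theorem: a Reebless foliation lives on an irreducible manifold unless it is the product foliation of $S^2\times S^1$. Its proof rests on Novikov's vanishing-cycle theorem, not on a tangency count. Apply it to $D\mathcal{F}$, for instance after doubling $DM$ once more along its transverse boundary tori, which keeps the foliation taut. A spherical leaf then makes every leaf a sphere by Reeb stability, which forces $\gamma=\emptyset$ and gives the two exceptional products.

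Otherwise you still have to pass irreducibility from the double back to $M$, which you do not address. Suppose a sphere $S\subset\mathrm{int}\,M$ bounds a ball $B\not\subset M$ in the double. Then $B$ swallows the mirror copy, so the mirror sphere $r(S)$ bounds a ball inside $\mathrm{int}\,B$ disjoint from $S$. Comparing the sides of $S$ and $r(S)$ under the reflection $r$ shows that the side of $S$ lying in $M$ is a ball.

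Step 4, as a direct argument in $M$, also has holes:
\begin{itemize}
\item The general position theorem quoted in the paper assumes each boundary component is a leaf or is transverse. That fails for the mixed boundary $R(\gamma)\cup\gamma$.
\item You cannot always make $\partial S$ transverse to $\mathcal{F}|\gamma$. In the product sutured manifold $R\times[0,1]$ ($\partial R\neq\emptyset$) with the product foliation, $\mathcal{F}|A(\gamma)$ consists of circles parallel to the core, so no essential curve in $A(\gamma)$ is transverse. Yet $\partial S$ must contain such curves. Boundary components lying in leaves therefore have to be allowed and the boundary bookkeeping redone.
\item $S$ must also be made boundary-incompressible.
\item The canonical section along $\gamma$ exists by transversality alone. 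What the suspension condition really buys is that the boundary tori of $DM$ carry Reebless foliations, which the theorems used in $DM$ require.
\item You never check that $R(\gamma)$ is incompressible, which is part of being norm-minimising in the paper's sense. It follows from Novikov's theorem: its components are leaves of the taut foliation $D\mathcal{F}$, and a compressing disc in $M$ is one in $DM$.
\end{itemize}

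Your fallback is the right route, and it needs no symmetrisation. The inclusion $(M,\gamma)\to(DM,\partial DM)$ sends any competitor $S$ and $R(\gamma)$ to the same class. Moreover, $R(\gamma)$ is a coherently oriented union of compact leaves of $D\mathcal{F}$, since its normal agrees with the transverse orientation along both $R_+$ and $R_-$. So once $DM$ is known to be irreducible, Thurston's compact-leaf theorem in its torus-boundary form gives directly
\[
\chi_-(R(\gamma))=x_{DM}([R(\gamma)])\le\chi_-(S),
\]
where $x_{DM}$ is the Thurston norm on $H_2(DM,\partial DM)$.
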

	
	Gabai \cite{gabai1983foliations} proved a converse to this theorem when $H_2(M , \gamma) \neq 0$.
	
	\begin{thm}[Gabai]
		Let $M$ be a taut sutured manifold and $H_2(M , \gamma) \neq 0$. Then $(M, \gamma)$ has a finite depth taut foliation $\mathcal{F}$. 
		\label{thm: taut sutured manifolds admit taut foliations}
	\end{thm}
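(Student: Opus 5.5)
The plan is to follow Gabai's sutured manifold hierarchy and induct on its length, building the foliation from the bottom of the hierarchy upwards.

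\textbf{Step 1: decomposing to product pieces.} Since $(M,\gamma)$ is taut and $H_2(M,\gamma)\neq 0$, choose a nonzero class and represent it by a norm-minimising surface $S$ in $H_2(M,\gamma)$. After adjusting $\partial S$ so that it meets the sutures well, $S$ is a well-groomed decomposing surface. The sutured manifold decomposition $(M,\gamma)\stackrel{S}{\rightsquigarrow}(M_1,\gamma_1)$ then has taut result; this uses that $S$ is norm-minimising and that $R(\gamma)$ is norm-minimising. Iterating, with a complexity argument guaranteeing termination, gives a finite hierarchy
\[ (M,\gamma)=(M_0,\gamma_0)\rightsquigarrow (M_1,\gamma_1)\rightsquigarrow\cdots\rightsquigarrow (M_n,\gamma_n), \]
where every $(M_i,\gamma_i)$ is taut and $(M_n,\gamma_n)$ is a product sutured manifold $R\times[0,1]$, with $\gamma_n=\partial R\times[0,1]$ plus possibly tori. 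Product discs and annuli must also be decomposed where needed so that the final pieces are genuine products.

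\textbf{Step 2: the base case.} The product $R\times[0,1]$ carries the depth $0$ foliation by the leaves $R\times\{t\}$. This foliation is taut, since every leaf meets a transverse arc from $R_-$ to $R_+$.

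\textbf{Step 3: the inductive step, which is the main obstacle.} Given a finite depth taut foliation $\mathcal{F}_{i+1}$ on $(M_{i+1},\gamma_{i+1})$, we must produce one on $(M_i,\gamma_i)$. Regluing along the two copies $S_\pm$ of $S$, which are leaves in $R_\pm(\gamma_{i+1})$, gives a foliation in which $S$ is a compact leaf. However, near $\partial S$ the regluing fails to be tangent to $R(\gamma_i)$ and transverse to $\gamma_i$ as required. The fix is to spiral: thicken $S$ to $S\times[0,1]$ and, in a neighbourhood of the parts of $R(\gamma_{i+1})$ that become interior, insert leaves that spiral onto $S$ and onto the components of $R(\gamma_i)$. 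These leaves are built by a suspension-type construction along the boundary, using a monodromy with no fixed points in the interior of the relevant intervals, and they raise depth by one. Well-groomedness of $S$ is exactly what makes the boundary combinatorics consistent: parallel arcs and coherent orientations allow the leaves to spiral in compatible directions, and ensure that $\mathcal{F}|\gamma_i$ has no Reeb components.

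\textbf{Step 4: verifying tautness.} Transverse arcs and loops from $\mathcal{F}_{i+1}$ persist. The new spiralling leaves accumulate on leaves that already meet transversals, so they meet those transversals too. Hence every leaf meets a transverse curve or an arc from $R_-$ to $R_+$, and the depth stays finite, bounded by roughly $n$.

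I expect the delicate part to be the spiralling construction near the sutures in Step 3, together with ensuring in Step 1 that the chosen decomposing surfaces can be groomed without losing norm-minimality. For Step 1, I would first try Gabai's double-curve-sum trick: add copies of $R(\gamma)$ to $S$ to make the boundary coherent.
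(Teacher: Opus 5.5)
The paper does not prove this theorem; it quotes it from Gabai. Gabai's proof has the same two-stage shape as yours: first a taut, well-groomed sutured manifold hierarchy, then an inductive construction of the foliation from the product pieces by regluing and spiralling. Your Steps 2--4 are a reasonable sketch of the second stage. Step 1, however, is really the heart of the theorem, and your justification for it is false.

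\emph{The homology group.} The class must be nonzero in $H_2(M,\partial M)$, not merely in $H_2(M,\gamma)$. A parallel copy of $R_+(\gamma)$ is norm-minimising in $[R_+(\gamma)]\in H_2(M,\gamma)$, which is nonzero once $\chi(R_+(\gamma))<0$. Yet decomposing along it just returns $(M,\gamma)$ together with a product collar, so no complexity can go down. For example, for the complement of a Seifert surface of a knot, $H_2(M,\gamma)\cong\mathbb{Z}$ is spanned by $[R_+(\gamma)]$. The relevant group there is $H_2(M,\partial M)\cong\mathbb{Z}^{2g}$, whose classes are typically represented by surfaces whose boundary crosses $R(\gamma)$, such as product discs.

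\emph{Tautness of the decomposition.} This is the more serious problem: norm-minimality of $S$ and of $R(\gamma)$ does not make the decomposition taut. Take a closed irreducible $N$, a norm-minimising non-fibre surface $F\subset N$, and $M=N\setminus\setminus F$, so that $R_+(\gamma)$ has a closed component $F$ with $\chi(F)<0$. A parallel copy of $F$ with the reversed orientation is then a norm-minimising decomposing surface in a nonzero class of $H_2(M,\gamma)$, and it is vacuously well-groomed. But cutting along it produces a collar $F\times[0,1]$ both of whose boundary components lie in $R_+(\gamma')$. That collar is not taut, since there $R_+(\gamma')$ is null-homologous yet has $\chi_->0$.

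What tautness of $(M',\gamma')$ actually requires, besides irreducibility, is that $R_\pm(\gamma')$ be norm-minimising in $H_2(M',\gamma')$. These surfaces are essentially double curve sums of $S$ with $R_\pm(\gamma)$. Because the Thurston norm is only subadditive, this does not follow from the minimality of the two pieces separately.

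Your proposed grooming trick actually undercuts your own justification. A double curve sum with a copy of a component $V$ of $R(\gamma)$ leaves the class in $H_2(M,\partial M)$ unchanged. But it raises $\chi_-$ by $-\chi(V)>0$ whenever $\chi(V)<0$, so the groomed surface is no longer norm-minimising there.

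In Gabai's argument, the decomposing surface is chosen with much more care. The tautness of each decomposition is proved as a genuine lemma, and termination needs a complexity that provably decreases. Together these form his existence theorem for taut, well-groomed hierarchies. That is the missing ingredient; without it, Steps 2--4 have no hierarchy to act on.
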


	Note that Theorem \ref{thm:norm-minimising surfaces are leaves} follows from Theorem \ref{thm: taut sutured manifolds admit taut foliations} applied to the sutured manifold $M \setminus \setminus S$. 
	
	\section{The fully marked surface theorem}
	
	\label{sec: fully marked}
	
	Let $M$ be a compact orientable 3-manifold and $\mathcal{F}$ be a taut foliation on $M$.  Recall that a compact properly embedded orientable incompressible and boundary-incompressible surface $S$ in $M$ is  \emph{algebraically fully marked} if the equality in (\ref{eq:Thurston inequality}) happens. The surface $S$ is \emph{positive} (resp. \emph{negative}) \emph{fully marked}\index{fully marked surface!positive/negative} if each component of $S$ is either a leaf of $\mathcal{F}$ whose transverse orientation agrees (resp. disagrees) with that of $\mathcal{F}$, or is transverse to $\mathcal{F}$ except at finitely many saddle tangencies all of which are positive (resp. negative). Here a positive (resp. negative) saddle tangency is a saddle tangency for the induced foliation $\mathcal{F}|S$ such that at the point of tangency the transverse orientations of $S$ and $\mathcal{F}$ agree (resp. disagree). A surface is \emph{fully marked}\index{fully marked surface}\index{surface!fully marked} if it is positive fully marked or negative fully marked. By the Roussarie--Thurston general position \cite{roussarie1974plongements, thurston1986norm}, in a tautly foliated manifold every algebraically fully marked surface is isotopic to a fully marked surface. 
	
	Note that any compact leaf of a taut foliation is fully marked, as is any union of compact leaves that are coherently oriented. The converse is not true as the following example shows. 
	
	\begin{example}
		Let $M$ be a compact orientable 3-manifold that fibers over the circle with fiber a compact orientable surface $S$ and fibration $f \colon M \rightarrow S^1$. Let $\mathcal{F}$ be the foliation by fibers of this fibration. Hence if $d\theta $ is the standard volume form of $S^1 = \mathbb{R}/\mathbb{Z}$, then $\mathcal{F}$ is tangent to the kernel of the closed 1-form $\omega = f^*(d \theta)$. Let  $\omega_1, \cdots, \omega_b$ be closed 1-forms representing a basis  for $H^1(M ; \mathbb{Q})$. Then for $\epsilon_i >0$ small, the 1-form 
		\[ \eta: = \omega + \epsilon_1 \omega_1 + \cdots + \epsilon_b \omega_b\ \]
		is closed and non-singular. Therefore the kernel of $\eta$ defines a foliation $\mathcal{G}$ of $M$ that can be thought as a perturbation of the foliation $\mathcal{F}$ (at the level of plane fields). If at least one of $\{ \epsilon_1, \cdots, \epsilon_b \}$ is irrational, every leaf of this foliation is non-compact. On the other hand, since the plane fields $T \mathcal{F}$ and $T \mathcal{G}$ are homotopic, they have the same Euler class. In particular 
		\[ \langle e(\mathcal{F}) , [S] \rangle  = \langle e(\mathcal{G}) , [S] \rangle. \]
		Since $S$ is algebraically fully marked with respect to $\mathcal{F}$, by the above equality, $[S]$ is algebraically fully marked with respect to $\mathcal{G}$ as well, while $\mathcal{G}$ has no compact leaves. 
	\end{example}

	In fact, every taut foliation of a compact orientable irreducible 3-manifold with no torus and annulus leaves can be $C^0$-approximated by taut foliations that have no compact leaves \cite{bonatti1994feuilles, tsuboi1994hyperbolic}. This shows that in some sense most fully marked surfaces in tautly foliated 3-manifolds are not union of leaves. The fully marked surface theorem shows that a converse is true if we allow to 
	
	\begin{enumerate}
		\item change the foliation while preserving the homotopy class of its tangent plane field, and
		\item change the surface while preserving its homology class. 
	\end{enumerate}

	In some cases, there is a unique norm-minimising surface in the homology class $[S]$, and so the second item above is not necessary. This happens for example if $S$ is a fiber of a fibration of $M$ over $S^1$. In general there might be several isotopy classes of norm-minimising surfaces in a given homology class, and we conjectured in \cite{gabai2020fullymarked} that in general the conclusion of the fully marked surface theorem does not hold without allowing for (2).  Note that if $M$ is a compact orientable irreducible boundary-irreducible atoroidal and anannular 3-manifold and $n$ is an integer, well-known results from normal surface theory show that the number of isotopy classes of orientable incompressible and boundary-incompressible surfaces in $M$ of Euler characteristic $n$ is finite, hence there are finitely many possibilities for the (possibly disconnected) surface in (2) above. See for example \cite{oertel2002existence} for a proof of this result, attributed to Haken. 
	
	
	\section{Contact structures}
	
	\label{sec: contact structures}
	
	Let $M$ be an oriented 3-manifold. A \emph{contact form}\index{contact form} on $M$ is a 1-form $\alpha \in \Omega^1(M)$ such that $\alpha \wedge d\alpha$ is nowhere zero. A plane field $\xi \subset TM$ is a \emph{contact structure}\index{contact structure} if locally it can be defined by a contact 1-form $\alpha$ as $\xi = \ker \alpha$. A contact form is \emph{positive}\index{contact form!positive} if $\alpha \wedge d\alpha$ is a volume form, that is, the orientation of $M$ agrees with that of $\alpha \wedge d\alpha$, and otherwise it is called a \emph{negative contact form}\index{contact form!negative}. A contact manifold\index{contact!manifold} is a pair $(M , \xi)$ where $M$ is an oriented 3-manifold and $\xi$ is a contact structure on $M$. Note that the 1-form $\alpha$ is not part of the data, and if the 1-form $\alpha$ defines a contact structure via $\xi = \ker \alpha$, then for any non-zero smooth function $f \colon M \rightarrow \mathbb{R}$ the 1-form $f \alpha$ defines $\xi$ as well. The standard contact structure on $\mathbb{R}^3$ is defined by $\ker \alpha$ for $\alpha = dz + x dy $. Darboux's theorem states that every contact structure locally looks like the standard contact structure on $\mathbb{R}^3$. 
	
	Given a contact manifold $(M , \xi)$, a knot $K  \subset M$ is called \emph{Legendrian}\index{Legendrian knot}\index{knot!Legendrian} if the tangent vectors satisfy $TK \subset \xi$, i.e. $\alpha (TK) =0$  for the contact 1-form defining $\xi$. The knot $K$ is called \emph{transverse}\index{transverse knot}\index{knot!transverse} if $TK$ is transverse to $\xi$ along the knot $K$, i.e. $\alpha(TK)$ is nowhere vanishing. A \emph{framing}\index{framing} for a knot $K \subset M$ is a trivialisation of the normal bundle of $K$ up to homotopy. The \emph{contact framing}\index{contact!framing}\index{framing!contact}, also called the \emph{Thurston--Bennequin framing}\index{framing!Thurston--Bennequin}\index{Thurston--Bennequin!framing}, of an oriented Legendrian knot $L \subset (M , \xi)$ is defined by the oriented normal of $K$ in $\xi$. 
	
	If $K$ is a nullhomologous knot in $M$ and $S \subset M$ is an embedded oriented surface with $\partial S = K$ then $K$ admits a \emph{Seifert framing}\index{framing!Seifert} defined by the oriented normal of $K$ in $TS_{|K}$. The Seifert framing does not depend on the choice of the bounding surface $S$. Therefore for a nullhomologous Legendrian knot $L \subset (M , \xi)$ we can convert the Thurston--Bennequin framing into an integer $\mathrm{tb(L)}$ which measures the rotation number of the Thurston--Bennequin framing with respect to Seifert framing in the normal plane field to $K$. This number $\mathrm{tb(L)}$ is called the \emph{Thurston--Bennequin number}\index{Thurston--Bennequin!number} of the Legendrian link $L$. 
	
	There is a dichotomy of contact structures into tight and overtwisted structures. Tight contact structures resemble taut or Reebless foliations, and overtwisted contact structures are similar to foliations that have Reeb components.  An \emph{overtwisted disc}\index{overtwisted!disc} for a contact manifold $(M, \xi)$ is an embedded disc $D \subset M$ such that $\partial D = L$ is a Legendrian knot such that the contact framing of $L$ coincides with the framing given by the disc $D$. A contact structure is \emph{overtwisted}\index{overtwisted!contact structure}\index{contact structure!overtwisted} if it has an overtwisted disc, and it is called \emph{tight}\index{contact structure!tight}\index{tight contact structure} otherwise. Unlike taut foliations, a tight contact structure might lift to an overtwisted contact structure via a finite covering. A contact structure $(M , \xi)$ is \emph{universally tight}\index{universally tight}\index{contact structure!universally tight} if the lifted contact structure $(\tilde{M}, \tilde{\xi})$ to the universal cover $\tilde{M}$ of $M$ is tight. 
	
	 The following analogues of Thurston's inequality (\ref{eq:Thurston inequality}) for tight contact structures are due to Eliashberg \cite{eliashberg1992contact} and Bennequin \cite{bennequin1982entrelacements}. 
	
	\begin{thm}[Eliashberg]
		Let $\xi$ be a tight contact structure on an oriented 3-manifold, and let $e(\xi) \in H^2(M ; \mathbb{R})$ be the Euler class of $\xi$. Then for every closed embedded orientable surface $S \subset M$ which is different from $S^2$ the following inequality holds 
		\[ | \langle e(\xi) , [S] \rangle | \leq - \chi(S). \]
		If $S = S^2$ then 
		\[  \langle e(\xi) , [S] \rangle  = 0 . \]
		\label{thm:eliashberg-inequality}
	\end{thm}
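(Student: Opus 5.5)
We follow Eliashberg's strategy, which mirrors Thurston's proof of Inequality (\ref{eq:Thurston inequality}) via an index sum. We put $S$ in general position with respect to $\xi$, so that the characteristic foliation $\xi|_S$ (the singular line field $TS\cap\xi$) has finitely many isolated singularities. These are the points where $T_pS=\xi_p$, and each is an elliptic (index $+1$) or hyperbolic (index $-1$) singularity. Each singularity $p$ is positive or negative according to whether the orientations of $T_pS$ and $\xi_p$ agree. With $e_\pm$ and $h_\pm$ the numbers of positive/negative elliptic and hyperbolic points, the same obstruction-theoretic computation as in Thurston's index sum formula gives
\[ \langle e(\xi),[S]\rangle = (e_+-h_+)-(e_--h_-), \]
while Poincar\'{e}--Hopf for the characteristic foliation gives
\[ \chi(S) = (e_+-h_+)+(e_--h_-). \]
Adding and subtracting, it suffices to prove $e_+-h_+\le 0$ and $e_--h_-\le 0$ when $S\neq S^2$. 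The sphere statement then follows by reversing the orientation of $S$ and combining with the analogous bound, or directly by showing $S^2$ can be made to have exactly one positive and one negative elliptic point.

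The key step is to eliminate elliptic points using tightness. Giroux's elimination lemma says that an elliptic and a hyperbolic singularity of the same sign, joined by a leaf of the characteristic foliation, can be cancelled by a $C^0$-small isotopy of $S$ supported near that leaf. This lowers $e_\pm$ and $h_\pm$ by one each and leaves the differences unchanged. So we will isotope $S$ until no further cancellation is possible and then argue that every elliptic point has disappeared, except in the sphere case.

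Consider a positive elliptic point $p$ and its basin, the set of leaves emanating from $p$. If the boundary of the basin contains a hyperbolic point of the same sign, we cancel. Otherwise, the basin is bounded by closed leaves, or by a cycle of separatrices consisting of hyperbolic points of the opposite sign. A closed leaf bounding a disc around $p$ is a Legendrian unknot with $\mathrm{tb}=0$ relative to that disc, which is an overtwisted disc. This is excluded by tightness.

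The main obstacle is the remaining case, where the basin's boundary passes through negative hyperbolic points or limit cycles in a more complicated way. Here we follow Eliashberg's argument. First we use Giroux flexibility to perturb $S$ so that $\xi|_S$ is Morse--Smale. We then analyse the closure of the basin of $p$. Either it is all of $S$, which forces $S=S^2$ with one positive and one negative elliptic point, or it yields a disc whose boundary is a Legendrian cycle. In the latter case the disc can be smoothed to an overtwisted disc, or the cycle permits a same-sign cancellation. We would carry out this case analysis carefully, in the spirit of Bennequin's inequality. Once every elliptic point of each sign is removed, we have $e_\pm=0$, and therefore $|\langle e(\xi),[S]\rangle| = |h_+-h_-| \le h_++h_- = -\chi(S)$.
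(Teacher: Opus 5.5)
Your outline is the route the paper indicates: the index sum formula plus Poincar\'e--Hopf, then removing the positive-index singularities of $\xi\cap TS$ by Giroux elimination, with tightness blocking the obstructions. The reduction to $e_\pm-h_\pm\le 0$ is correct. The gap is in the step that carries all the content, which you only assert and partly misdescribe.

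Your dichotomy ``either $\overline{B}$ is all of $S$, which forces $S=S^2$, or it yields a disc bounded by a Legendrian cycle'' is false. On $T^2$, take the reversed gradient of a tilted height function, with the maximum a positive elliptic point $p$, the minimum a negative elliptic point $n$, and both saddles negative hyperbolic. The basin of $p$ is $T^2$ minus the wedge of two circles formed by the saddles' unstable separatrices, all ending at $n$, so $\overline{B}=T^2$. As a characteristic foliation this gives $\langle e(\xi),[T^2]\rangle=(1-0)-(1-2)=2>0=-\chi(T^2)$, so it is exactly the configuration tightness must exclude. The correct picture is that the basin is the image of the interior of a disc under a map $f\colon D\to S$ that need not be injective on $\partial D$. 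The boundary cycle can run over the same separatrix twice. It passes through elliptic points of the opposite sign as well as hyperbolic points, since a Morse--Smale foliation has no saddle connections. The sphere is detected by $f(\partial D)$ being a single point, not by $\overline{B}=S$.

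The missing idea is why this cycle yields an overtwisted disc when no positive hyperbolic point is available for cancellation. This is not automatic: a disc whose Legendrian boundary passes through singular points can have nonzero twisting. The standard $\mathrm{tb}=-1$ unknot bounds such a disc, with two boundary singularities of opposite signs. The argument runs as follows. Along the cycle, $\xi$ and $TS$ share the tangent line and coincide exactly at the singular points, with equal orientations at positive ones and opposite orientations at negative ones. If all singularities on the cycle are negative, the angle between the two planes can only cross odd multiples of $\pi$. Between consecutive singular points it stays between two consecutive multiples of $\pi$, only one of which is odd, so each crossing is undone by the next. Hence the twisting of $\xi$ relative to $D$, and so $\mathrm{tb}$, is $0$. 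You must then push $f(D)$ off $S$ near doubly traversed edges, keeping the boundary Legendrian, to get an embedded overtwisted disc.

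Finally, reversing the orientation of $S$ gives nothing for $S^2$: it just swaps $e_+,h_+$ with $e_-,h_-$ and negates the pairing. The sphere case comes from the same analysis. Once no cancellation or overtwisted disc is possible, each surviving elliptic point has basin equal to $S^2$ minus one elliptic point of the opposite sign. Since $\chi(S^2)=2$ forces a survivor, you get $e_+=e_-=1$ and $h_+=h_-=0$.
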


	Theorem \ref{thm:eliashberg-inequality} can be proved along lines that philosophically are similar to the proof of Thurston's inequality: namely removing singularities of positive index for the induced foliation $\xi \cap TS$ on $S$ and using an index sum formula to derive the inequality. See \cite{ozbagci-stipsicz} or \cite{geiges-book}.	There are also relative versions of Eliashberg's inequality (Theorem \ref{thm:eliashberg-inequality}) for embedded orientable surfaces whose boundary is either a transverse or a Legendrian link in a tight contact manifold. Let $S \subset (M, \xi)$ be an embedded orientable surface such that $\Gamma = \partial S$ is transverse to $\xi$. Assume that the orientations of $\Gamma$, $\xi$, and $M$ are related in the following way: when $\Gamma$ is oriented as $\partial S$, at any point $x \in \Gamma$ the orientation of the plane $\xi_x$ together with the orientation of $T_x\Gamma$ gives the orientation of $T_xM$. The relative Euler class\index{relative Euler class!of a contact structure} $e(\xi)$ is defined as follows: Given a vector field $X$ along $\Gamma$ that generates the line field $\xi \cap T(S)$, the number $\langle e(\xi) , [S] \rangle$ is the obstruction for the extension of $X$ to $S$ as a vector field in $\xi$. In particular, the relative Euler number $e(\xi)$ is an element of $H^2(N , \partial N)$ where $N$ is the complement of a regular neighbourhood of $\Gamma = \partial S$ in $M$. 
	
	\begin{thm}[Eliashberg]\index{Eliashberg inequality}
		Let $\xi$ be a tight contact structure on an oriented 3-manifold. If $S$ is an embedded orientable surface with boundary transverse to $\xi$ then we have the following inequality 
		\[ | \langle e(\xi) , [S] \rangle | \leq - \chi(S), \]
		where $e(\xi)$ is the relative Euler class of $\xi$.
		\label{thm:eliashberg-transverse knot}
	\end{thm}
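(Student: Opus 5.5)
We follow the same philosophy as Thurston's proof of Inequality (\ref{eq:Thurston inequality}): put $S$ in a good position relative to $\xi$, compute $\langle e(\xi),[S]\rangle$ by an index sum over the singular points of the characteristic foliation $S_\xi$ (the singular line field $\xi\cap TS$), and then use tightness to cancel the singularities that would violate the bound.

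\textbf{Position and index formulas.} Since $\Gamma=\partial S$ is transverse to $\xi$, the characteristic foliation $S_\xi$ is transverse to $\partial S$. With the orientation convention of the statement, it points outward along $\Gamma$. After a $C^\infty$-small perturbation of $S$ fixing $\partial S$, the singularities of $S_\xi$ are isolated nondegenerate elliptic or hyperbolic points. Each singularity $p$ gets a sign according to whether $\xi_p$ and $T_pS$ agree as oriented planes. Let $e_\pm$ and $h_\pm$ be the numbers of positive and negative elliptic and hyperbolic points. Orient $S_\xi$ by a vector field $X$ tangent to $\xi\cap TS$. Near a positive singularity $X$ is a section of $\xi\cong TS$ with its usual index; near a negative one the identification reverses orientation. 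This gives the relative Euler class formula
\[
\langle e(\xi),[S]\rangle=(e_+-h_+)-(e_--h_-),
\]
and the relative Poincar\'e--Hopf theorem, using that $X$ points outward along $\partial S$, gives
\[
\chi(S)=(e_+-h_+)+(e_--h_-).
\]
Adding and subtracting these, the claimed inequality $|\langle e(\xi),[S]\rangle|\le-\chi(S)$ is equivalent to the two inequalities $e_+\le h_+$ and $e_-\le h_-$. By symmetry (reversing the orientation of $S$) it suffices to show that we can arrange $e_+\le h_+$. If $S$ is a disc, we instead aim to get the statement directly from the Bennequin inequality, which is the model case.

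\textbf{Elimination of elliptic points (main step).} We use Giroux's elimination lemma: an elliptic point and a hyperbolic point of the same sign joined by a leaf of $S_\xi$ can be cancelled by a $C^0$-small isotopy of $S$ supported near that leaf and fixing $\partial S$. This isotopy does not change the relative homology class, so the Euler number is unchanged. The goal is to reach a configuration in which every positive elliptic point is still matched with a distinct positive hyperbolic point.

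The expected obstacle is a positive elliptic point $p$ that cannot be cancelled this way. Consider its basin, the closure of the set of leaves emanating from $p$. Generically its boundary consists of hyperbolic points and their separatrices, together with possibly closed leaves or arcs to $\partial S$. Since $S_\xi$ points outward along $\partial S$, leaves from $p$ may exit through $\Gamma$. The first thing to try is to isotope $S$ near $\partial S$ (a standard collar modification) so that each component of $\Gamma$ becomes a limit of a negative-flowing region. Then every basin of a positive elliptic point closes up in the interior of $S$. If such a basin has no positive hyperbolic point on its boundary, then either its boundary is a closed leaf, or all its boundary hyperbolic points are negative. In each case we try to produce an overtwisted disc. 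A closed leaf bounding a disc in the basin is a Legendrian unknot with $\mathrm{tb}=0$ relative to that disc. When the boundary consists of negative hyperbolic points and their separatrices, we try to modify the basin slightly so that its boundary becomes such a curve. Either way this contradicts tightness.

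The main difficulty is therefore this basin analysis, in particular the degenerate configurations: retrograde saddle connections, which must first be removed by a further perturbation, and the interaction with the boundary collar. Once each positive elliptic point is either cancelled or injectively paired with a positive hyperbolic point, we get $e_+\le h_+$, and symmetrically $e_-\le h_-$, which gives the inequality.
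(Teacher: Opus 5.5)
Your index formulas and the reduction of the claim to the pair $e_+\le h_+$, $e_-\le h_-$ are correct. The argument breaks at ``by symmetry it suffices to arrange $e_+\le h_+$'', and the inequality you then set out to prove is false.

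\textbf{The symmetry fails.} Reversing the orientation of $S$ reverses $\Gamma$. Then $\Gamma$ violates the orientation convention (it becomes negatively transverse) and $S_\xi$ points inward along it. The boundary destroys the symmetry that exists for closed surfaces.

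\textbf{The quantity $e_+-h_+$ is rigid.} You have $e_+-h_+=\tfrac12\bigl(\chi(S)+\langle e(\xi),[S]\rangle\bigr)$. This is unchanged by Giroux elimination, by collar modifications, and by any isotopy keeping $\partial S$ transverse. So it cannot be pushed down to $\le 0$ unless it already is.

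\textbf{Counterexamples.} For a disc, $(e_+-h_+)+(e_--h_-)=\chi(S)=1$, so your two targets can never hold together. Concretely, take the disc $\{z=0,\ x^2+y^2\le 1\}$ in the tight $(\mathbb{R}^3,\ker(dz+x\,dy-y\,dx))$. Its characteristic foliation has a single positive elliptic point, a source all of whose leaves exit through $\Gamma$. So $e_+-h_+=1$, and there is no overtwisted disc to find. For any genus, transverse stabilisation of $\partial S$ raises $\langle e(\xi),[S]\rangle$ by $2$ each time while fixing $\chi(S)$.

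So the statement with the absolute value cannot be proved as written: for a disc it would assert $|\langle e(\xi),[S]\rangle|\le -1$. Your fallback of getting the disc case from the Bennequin inequality also fails, because that inequality is one-sided. The leaves of a source escaping through $\Gamma$ are not a technicality to be removed by a collar modification; they are exactly why this half of the inequality is false.

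\textbf{What is true and how to prove it.} For transverse boundary, what holds is the one-sided Bennequin--Eliashberg inequality. In your normalisation it reads $-\langle e(\xi),[S]\rangle\le-\chi(S)$, i.e.\ $e_-\le h_-$; this is the self-linking bound $\mathrm{sl}(\partial S)\le-\chi(S)$. It is all that is needed to deduce the Legendrian version, by applying it to the two transverse push-offs. The absolute value is correct only for closed $S\neq S^2$, where orientation reversal is a genuine symmetry.

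Your basin argument proves the true half if you run it on the other sign:
\begin{enumerate}
\item At a singular point, the divergence of the characteristic vector field has the sign of that point. So negative elliptic points are the sinks of $S_\xi$.
\item Since $S_\xi$ exits through $\Gamma$, no leaf enters $S$ through $\Gamma$. The basin of a sink therefore automatically stays away from a collar of $\partial S$, and no collar modification is needed.
\item Then either a negative hyperbolic point on the boundary of that basin is joined to the sink by a leaf, and Giroux elimination cancels the pair; or all singular points on the basin boundary are positive, and one builds an overtwisted disc, contradicting tightness.
\end{enumerate}
This is the standard argument the paper points to in Ozbagci--Stipsicz and Geiges.
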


	Now let $L$ be a nullhomologous oriented Legendrian link in a contact manifold $(M, \xi)$, and $S$ be an embedded oriented surface with boundary $L$. The \emph{rotation number}\index{rotation number} of $L$, denoted by $\mathrm{rot}_S(L)$, is defined as the relative Euler number of $\xi_{|S}$ with the trivialisation of $\xi$ along $\partial S$ given by the tangents of $L$. The rotation number $\mathrm{rot}_S(L) \in \mathbb{Z}$ in general depends on the choice of $S$. The rotation number also depends on the orientation of $L$ and changes sign when the orientation of $L$ is reversed. 
	
	\begin{thm}[Eliashberg]\index{Eliashberg inequality}
		The contact 3-manifold $(M , \xi)$ is tight if and only if for all embedded oriented $S \subset M$ with $\partial S$ Legendrian we have 
		\[ \mathrm{tb}(L) + |\mathrm{rot}_S(L)| \leq - \chi(S). \]
		\label{thm:eliashberg-Legendrian knot}
	\end{thm}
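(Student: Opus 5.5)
The theorem has two directions, and I would treat them separately. The reverse direction is short. Suppose $\xi$ is overtwisted and let $D$ be an overtwisted disc with $L=\partial D$. Then the contact framing of $L$ agrees with the disc framing, which is the Seifert framing, so $\mathrm{tb}(L)=0$. The tangent vector to $L$ lies in $\xi$. Since $D$ can be perturbed so that the characteristic foliation $\xi\cap TD$ has a single elliptic singularity with $L$ a closed leaf, the index computation gives $\mathrm{rot}_D(L)=0$. Hence
\[ \mathrm{tb}(L)+|\mathrm{rot}_D(L)| = 0 > -1 = -\chi(D), \]
and the inequality fails. So the inequality for all $S$ forces tightness.

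For the forward direction, assume $\xi$ is tight and let $S$ have Legendrian boundary $L$. I would follow the same philosophy as Thurston's proof of Inequality (\ref{eq:Thurston inequality}): put $S$ in general position so that its characteristic foliation $\xi\cap TS$ has isolated elliptic and hyperbolic singularities, each with a sign ($+$ if the orientations of $\xi$ and $TS$ agree, $-$ otherwise). Let $e_\pm$ and $h_\pm$ denote the numbers of positive and negative elliptic and hyperbolic points. The boundary $L$ is Legendrian, so after a $C^\infty$-small perturbation fixing $L$ the characteristic foliation is tangent to $L$ near the boundary. Two formulas then hold.
\begin{enumerate}
\item Poincar\'e--Hopf with boundary correction: the twisting of $\xi$ relative to $TS$ along $L$ gives $\chi(S)=(e_++e_-)-(h_++h_-)+\mathrm{tb}(L)$, or more precisely $(e_+-h_+)+(e_--h_-) = \chi(S)+\mathrm{tb}(L)$ after accounting for the $|\mathrm{tb}(L)|$ half-elliptic/half-hyperbolic boundary singularities.
\item The relative Euler class index formula, the analogue of Thurston's $I_P-I_N$, which gives $\mathrm{rot}_S(L)=(e_+-h_+)-(e_--h_-)$.
\end{enumerate}
Adding and subtracting these yields
\[ \mathrm{tb}(L)\pm\mathrm{rot}_S(L) = -\chi(S)+2(e_\mp-h_\mp), \]
so it suffices to arrange $e_\pm\le h_\pm$ for both signs.

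The main step, and the main obstacle, is to use tightness to cancel elliptic points against hyperbolic points of the same sign. This is where tightness enters, in the role that Reeblessness and tautness play in Roussarie--Thurston general position. First I would perturb to make the characteristic foliation Morse--Smale-like. Then I would apply the Giroux elimination lemma: an elliptic point connected by a single separatrix to a hyperbolic point of the same sign can be cancelled by a $C^0$-small isotopy of $S$ rel boundary. The task is to show that every positive elliptic point has such a partner. Follow the stable separatrices flowing out of a positive elliptic point, forming its basin. If the basin closed up into a limit cycle bounding a disc whose boundary becomes Legendrian with vanishing twisting, that disc would be an overtwisted disc, contradicting tightness. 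Hence the basin's boundary must contain hyperbolic points, and a same-sign hyperbolic point can be cancelled against the elliptic point. A degenerate exceptional case is $S=D^2$ with no hyperbolic points, which tightness forbids except when $\mathrm{tb}(L)\le -1$; there the inequality reads $-1\ge \mathrm{tb}(L)\pm\mathrm{rot}$ and is checked directly. An alternative route, if Giroux elimination becomes messy, is to stabilise $L$ and push it to a transverse link. The transverse push-offs $L_\pm$ satisfy $\mathrm{sl}(L_\pm)=\mathrm{tb}(L)\mp\mathrm{rot}_S(L)$, which reduces the claim to the relative transverse inequality of Theorem \ref{thm:eliashberg-transverse knot}, in the self-linking form $\mathrm{sl}\le -\chi(S)$. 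I would in fact present this reduction as the proof, quoting Theorem \ref{thm:eliashberg-transverse knot}, and keep the elimination argument as its justification.
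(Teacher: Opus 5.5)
Your argument is correct and follows the same route as the paper. For the converse, an overtwisted disc has $\mathrm{tb}(L)=0$ while $-\chi(D)=-1$; for the forward direction, you apply the transverse inequality (Theorem \ref{thm:eliashberg-transverse knot}, in the one-sided form $\mathrm{sl}\le-\chi(S)$) to the two transverse push-offs, using $\mathrm{sl}(L_\pm)=\mathrm{tb}(L)\mp\mathrm{rot}_S(L)$.

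One slip does not affect the conclusion. For an overtwisted disc, the single elliptic point gives $\mathrm{rot}_D(\partial D)=\pm1$, not $0$. Only $|\mathrm{rot}_D(L)|\ge 0$ is needed, though. Your direct elimination sketch, whose formulas have inconsistent signs, can simply be dropped once the transverse theorem is quoted.
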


	Since an overtwisted disc has $\chi(D) =1$ and $\mathrm{tb}(L)=0$, a contact structure satisfying the inequality in Theorem \ref{thm:eliashberg-Legendrian knot} for all $S$ with Legendrian boundary must be tight. Bennequin proved the above inequality for the standard contact structure $\xi_{\mathrm{std}}$ on $\mathbb{R}^3$. In particular, he showed that $(\mathbb{R}^3, \xi_{\mathrm{std}})$ is tight. Every Legendrian curve in a contact manifold $(M , \xi)$ can be $C^0$ approximated by transverse curves, and using this Theorem \ref{thm:eliashberg-Legendrian knot} can be deduced from Theorem \ref{thm:eliashberg-transverse knot}. It follows that if a contact structure $(M , \xi)$ satisfies the inequality in Theorem \ref{thm:eliashberg-transverse knot} for every such $S$ with $\partial S$ transverse to $\xi$, then $\xi$ is tight. See Eliashberg and Thurston \cite[Section 3.3]{eliashberg1998confoliations} for a discussion of Inequality (\ref{eq:Thurston inequality}) for contact structures and \emph{confoliations}\index{confoliation} (which is a hybrid structure between foliations and contact structures).  

	Eliashberg and Thurston \cite[Theorem 2.4.1]{eliashberg1998confoliations} showed that taut foliations can be approximated by tight contact structures. 
	
	\begin{thm}[Eliashberg--Thurston]
		Suppose that $\mathcal{F}$ is a codimension-one $C^2$ foliation of a 3-manifold that is different from the product foliation on $S^2 \times S^1$ by leaves $S^2 \times \text{point}$. Then $\mathcal{F}$ can be $C^0$-approximated by a pair $\xi_+$ and $\xi_-$ of positive and negative contact structures. 
	\end{thm}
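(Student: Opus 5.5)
We follow the strategy of Eliashberg--Thurston: first approximate the foliation by a confoliation that is contact in a neighbourhood of a suitable set, and then spread the contact condition over the whole manifold. The starting observation is local. In a foliated chart $\mathcal{F}$ is $\ker(dz)$. A positive contact structure $C^0$-close to it can be obtained by twisting the planes slightly: take $\ker(dz + \epsilon\,\beta)$, where $\beta$ is a 1-form chosen so that $dz\wedge d\beta>0$. The contact condition is open and the deformation is small, so the resulting plane field is $C^0$-close to $T\mathcal{F}$. Once the perturbation is made contact on a non-empty open set, the task is to push that contact region out to all of $M$ by deformations supported near leaves. The negative structure $\xi_-$ comes from the same argument with $M$ given the reversed orientation, so it suffices to construct $\xi_+$.

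\textbf{Step 1: create a contact region using holonomy.} Suppose some leaf carries a loop $\gamma$ with non-trivial, in particular attracting, holonomy. A linear-holonomy or Sacksteder-type argument then provides such a curve in every $C^2$ foliation that is not a fibration by the leaves of a closed 1-form. Near $\gamma$ the foliation is given by $\ker(dz - a(z)\,d\theta)$, and a small modification makes this $\ker(dz - a\,d\theta + \epsilon\,x\,d\theta)$, which is contact in a tubular neighbourhood of $\gamma$. The $C^2$ hypothesis is used exactly here, to control the holonomy germ. This step produces a confoliation that is contact near $\gamma$.

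\textbf{Step 2: propagate the contact region.} Every point of a leaf $L$ through the contact region can be joined to that region by a path in $L$. Along a thin band around such a path, perturb the structure in flow-box coordinates. In these coordinates the plane field is nearly $\ker(dz)$, and we can install the twisting term $\epsilon(s)\,x\,dy$, where $\epsilon(s)$ grows as we move away from the already-contact end. This extends the contact region along the band.

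A compactness argument then covers $M$ by finitely many such flow boxes. The minimal sets of $\mathcal{F}$ need separate handling. Each minimal set meets a holonomy curve, and any leaf near a minimal set accumulates on it. This lets every point be reached from a contact region by a leafwise path, except in the cases without holonomy.

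\textbf{Step 3: treat the cases without holonomy.} If $\mathcal{F}$ has no holonomy at all, then it is defined by a closed non-singular 1-form $\omega$. In that case use the standard construction $\ker(\omega + \epsilon\lambda)$, where $\lambda$ is a 1-form with $\omega\wedge d\lambda>0$. Such a $\lambda$ exists because the leaves have a transverse invariant measure, and the argument fails only for the sphere leaves of $S^2\times S^1$, which is the excluded case. Torus leaves without holonomy are handled by an explicit model on $T^2\times[0,1]$.

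The main obstacle is Step 1 together with this dichotomy: producing holonomy that is genuinely attracting rather than merely non-trivial, and handling leaves whose holonomy is infinitely tangent to the identity. The first tool to try is Sacksteder's theorem on the existence of contracting holonomy in exceptional minimal sets. For compact leaves with non-trivial but non-contracting holonomy, the plan is a direct $C^0$-small modification of the foliation near the leaf that makes its holonomy attracting.
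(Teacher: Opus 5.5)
The paper gives no proof of this statement; it quotes \cite[Theorem 2.4.1]{eliashberg1998confoliations}. Your architecture matches Eliashberg--Thurston's: a contact region near a curve with attracting holonomy in every minimal set, then propagation along leafwise paths, with the no-holonomy case treated separately. Step 2 is essentially their propagation lemma, provided the compensating negative twist is placed inside the existing contact region. There are, however, genuine gaps.

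\textbf{Step 3 fails outright.} If $d\omega=0$ then $\omega\wedge d\lambda=-d(\omega\wedge\lambda)$, so $\int_M\omega\wedge d\lambda=0$ on closed $M$, and no $\lambda$ with $\omega\wedge d\lambda>0$ exists. For every $\lambda$ one has $(\omega+\epsilon\lambda)\wedge d(\omega+\epsilon\lambda)=\epsilon\,\omega\wedge d\lambda+\epsilon^2\lambda\wedge d\lambda$. This takes both signs for small $\epsilon\neq0$ unless $\omega\wedge d\lambda\equiv0$. The transverse invariant measure is the obstruction to such a linear deformation, not its source. Also, ``no holonomy'' gives a closed $1$-form only up to topological conjugacy (Sacksteder), which does not control plane fields in $C^0$.

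What is needed is a $C^0$-small perturbation that creates holonomy, reducing to the holonomy case. For instance, on a product region $S\times[-\delta,\delta]$ around a compact leaf or fibre, take $\ker(dt-a(t)\beta)$. Here $\beta$ is a closed $1$-form on $S$ with $[\beta]\neq0$, and $a$ is small, compactly supported, with a zero where $a'<0$. This plane field is integrable and $C^0$-close to $\ker dt$, and it has linear contracting holonomy along a curve dual to $\beta$. This is exactly where $H^1(S^2)=0$ excludes the sphere foliation. The same device is needed for compact leaves of any genus with trivial holonomy inside a foliation that has holonomy elsewhere. So your assertion that every minimal set meets a holonomy curve is false as stated.

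\textbf{Step 1 misses the key mechanism.} For $\alpha=dz-A\,d\theta$ one has $\alpha\wedge d\alpha=-\partial_xA\,dz\wedge dx\wedge d\theta$. Your choice $A=a(z)-\epsilon x$ is contact whatever $a$ is, which already signals the problem. Once the added term is cut off in $x$, $\int\partial_x(\cdot)\,dx=0$, so part of the transition region is negatively twisted. You then do not get a confoliation agreeing with $\mathcal{F}$ away from $\gamma$.

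The twist must be paid for by the holonomy, and it goes into the $dx$-coefficient. For $\alpha=dz-b\,d\theta-c\,dx$ with $b,c$ independent of $\theta$,
$\alpha\wedge d\alpha=(-\partial_xb+b\,\partial_zc-c\,\partial_zb)\,dz\wedge dx\wedge d\theta$,
which contains no $\partial_xc$. With linear holonomy $b=\mu z$, $\mu\neq0$, put $c=-\epsilon\phi(x)\eta(z)$. Here $\phi,\eta$ are bumps and $\eta$ is even and decreasing in $|z|$. Then $\alpha\wedge d\alpha=\epsilon\mu\,\phi(x)\,(\eta-z\eta')\,dz\wedge dx\wedge d\theta$. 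This has constant sign (choose the sign of $\epsilon$), is compactly supported, and is nonzero near $\gamma$.

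The $C^2$ hypothesis enters through Sacksteder's theorem, which supplies such linear holonomy in exceptional minimal sets; it is not used merely to control the germ. Non-linear, only sometimes-attracting holonomy requires Eliashberg--Thurston's more delicate version of this construction.
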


	If $(M , \xi)$ is a closed contact 3-manifold, a \emph{weak symplectic filling}\index{weak symplectic filling} of $(M, \xi)$ is a compact symplectic 4-manifold $(W, \omega)$ with $\partial W = M$ as oriented manifolds, and such that $\omega |\xi$ is positive definite everywhere. We say that $(M , \xi)$ is \emph{weakly symplectically fillable}\index{weakly symplectically!fillable} if such a $(W , \omega)$ exists. A contact 3-manifold is \emph{weakly symplectically semi-fillable}\index{weakly symplectically!semi-fillable} if it is a connected component of a weakly symplectically fillable manifold.
	Eliashberg and Thurston \cite{eliashberg1998confoliations} showed that contact structures $C^0$-close to taut foliations are weakly symplectically semi-fillable and universally tight \cite[Corollaries 3.2.5 and 3.2.8]{eliashberg1998confoliations}. Later it was shown that weakly symplectically semi-fillable contact structures are weakly symplectically fillable \cite{eliashberg2004few, etnyre2004symplectic}. Additionally, Bowden \cite{bowden2016approximating} and Kazez and Roberts \cite{kazez2015approximating} generalised the work of Eliashberg and Thurston to $C^0$ foliations.  
	
	Since the Euler class is invariant under homotopy of plane fields, the above results of Eliashberg and Thurston imply that for $M$ a closed oriented 3-manifold, any cohomology class $a \in H^2(M ; \mathbb{Z})$ which is realised as the Euler class of a taut foliation on $M$ is also realised as the Euler class of a tight contact structure on $M$. Moreover, there are cases that a cohomology class is realised by tight contact structures but not by taut foliations. For example $S^3$ has a unique positive tight contact structure up to isotopy \cite{bennequin1982entrelacements, eliashberg1992contact}, but no taut foliation \cite{novikov1965topology}. In \cite{yazdi2020thurston} the author asked if the Euler class one conjecture holds for tight contact structures. 
	
	\begin{question}
		Let $M$ be a closed orientable hyperbolic 3-manifold with first Betti number at least one. Is every integral class $a \in H^2(M ; \mathbb{R})$ of dual norm one and satisfying the parity condition realised as the Euler class of a tight contact structure?
		\label{que:euler class one for tight contact structures}
	\end{question}
	
	With Steven Sivek \cite{sivek2023thurston} we showed that the counterexample cohomology classes in \cite{yazdi2020thurston} \emph{are} realised by possibly negative tight (in fact weakly symplectically fillable) contact structures. Recently, Yi Liu has proved the following remarkable result \cite{liu2024euler}.
	
	\theoremstyle{theorem}
	\newtheorem*{Liu-contact}{Theorem \ref{thm:Liu-contact}}
	\begin{Liu-contact}[Liu]
		For every closed hyperbolic 3–manifold $M$, there exists some connected finite cover $\tilde{M}$ of $M$, and some even lattice point $\tilde{w} \in H^2(\tilde{M}; \mathbb{R})$ of dual Thurston norm one such that $\tilde{w}$ is not the real Euler class of any weakly symplectically fillable contact structure on $\tilde{M}$.
	\end{Liu-contact}	
	
	In particular, his result produces many new counterexamples to the Euler class one conjecture for taut foliations.

	\theoremstyle{theorem}
	\newtheorem*{Liu-corollary}{Theorem \ref{cor:Liu-taut-foliation}}
	\begin{Liu-corollary}[Liu]
		Every closed hyperbolic 3-manifold has a finite cover for which the Euler class one conjecture (for taut foliations) does not hold. 
	\end{Liu-corollary}

	We now explain some of the ingredients in Liu's proof of Theorem \ref{thm:Liu-contact}, following \cite{liu2024euler}. The first idea for proving that weakly symplectically fillable contact structures with a given Euler class do not exist is a non-vanishing result. Liu \cite{liu2024euler} proves the following, based on a non-vanishing result due to Ozsv\'{a}th and Szab\'{o} \cite[Theorem 4.2]{ozsvath2004holomorphic}. In the following $\widehat{\mathrm{HF}}$ stands for the (hat flavour of) \emph{Heegaard Floer homology}\index{Heegaard Floer homology} introduced by Ozsv\'{a}th and Szab\'{o}, which is an abelian group graded by $\mathrm{Spin}^c$ structures on the manifold. Here we consider a \emph{$\mathrm{Spin}^c$ structure}\index{$\mathrm{Spin}^c$ structure} as a non-vanishing vector field on $M$, where two such vector fields are equivalent if one can be homotoped to the other one outside of a 3-ball through non-vanishing vector fields.  Therefore $ \mathrm{rank}_\mathbb{Z} \widehat{\mathrm{HF}}(- M, \mathfrak{s}_\xi) $ denotes the rank of the Heegaard Floer homology group of the manifold $-M$, that is $M$ with the opposite orientation, in the $\mathrm{Spin}^c$ grading $\mathfrak{s}_\xi$. 
	
	\begin{prop}[Non-vanishing criterion]
		If $(M , \xi)$ is an oriented closed contact 3–manifold that is weakly symplectically fillable, then 
		\[ \mathrm{rank}_\mathbb{Z} \widehat{\mathrm{HF}}(- M, \mathfrak{s}_\xi) > 0. \]
		Here, $\mathfrak{s}_\xi$ denotes the canonical $\mathrm{Spin}^c$ structure of $\xi$, which is represented by any nowhere vanishing vector field transverse to and agreeing with its prescribed transverse orientation.
	\end{prop}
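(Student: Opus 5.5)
The plan is to deduce the statement from the Ozsv\'{a}th--Szab\'{o} non-vanishing theorem for the contact invariant \cite[Theorem 4.2]{ozsvath2004holomorphic}. That theorem says that for a weakly symplectically fillable $(M,\xi)$, the contact invariant $c^+(\xi)$ in $\mathrm{HF}^+(-M,\mathfrak{s}_\xi)$ is non-zero, possibly after twisting coefficients by the class $[\omega|_M]$ of the filling. The twisting is necessary when the symplectic form on the boundary is not exact. Applying the theorem in this form gives a non-zero element of the twisted group $\underline{\mathrm{HF}}^+(-M,\mathfrak{s}_\xi;\Lambda_\omega)$, where $\Lambda_\omega$ is the Novikov-type ring attached to $[\omega]$. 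Two things then remain: pass from the twisted plus flavour to the untwisted hat flavour, and pass from non-vanishing to positive $\mathbb{Z}$-rank.

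\textbf{Step 1: from plus to hat.} I would use the exact triangle $\widehat{\mathrm{HF}} \to \mathrm{HF}^+ \xrightarrow{U} \mathrm{HF}^+$, which also holds with twisted coefficients. The contact invariant $c^+(\xi)$ lies in the kernel of $U$, since it is the image of $\widehat{c}(\xi)$ under the natural map $\widehat{\mathrm{HF}} \to \mathrm{HF}^+$. So non-vanishing of the twisted $c^+$ gives non-vanishing of the twisted $\widehat{c}(\xi)$, and hence $\underline{\widehat{\mathrm{HF}}}(-M,\mathfrak{s}_\xi;\Lambda_\omega) \neq 0$.

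\textbf{Step 2: remove the twisting.} I would use the universal coefficient spectral sequence, or equivalently view the twisted chain complex as the untwisted complex tensored over $\mathbb{Z}[H^1(M)]$. The ring $\Lambda_\omega$ is built as a field, so it is flat over $\mathbb{Z}[H^1]$ after localisation. If the untwisted $\widehat{\mathrm{HF}}(-M,\mathfrak{s}_\xi)$ had rank zero, the untwisted complex would be rationally acyclic. I would then argue that the fully twisted complex over $\mathbb{Z}[H^1]$ becomes acyclic after tensoring with $\mathrm{Frac}$, by a filtration/semicontinuity argument. That would force the $\Lambda_\omega$-twisted homology to vanish, a contradiction. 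Equivalently, I can use the known inequality
\[ \dim_{\mathbb{F}} H_*(C\otimes_{\mathbb{Z}[H^1]} \mathbb{F}) \le \mathrm{rank}\, H_*(C\otimes \mathbb{Q}) \]
for a free chain complex $C$ over a Laurent polynomial ring, comparing its generic fibre with the fibre at the trivial character. This is upper semicontinuity of homology rank on the character variety. Applied with $\mathbb{F}$ the fraction field of $\Lambda_\omega$, this step yields $\mathrm{rank}_\mathbb{Z}\widehat{\mathrm{HF}}(-M,\mathfrak{s}_\xi) > 0$.

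\textbf{Main obstacle.} I expect Step 2 to be the hardest. The semicontinuity must be set up correctly, because $\Lambda_\omega$ is a completion (a Novikov ring) rather than a localisation of $\mathbb{Z}[H^1]$. The point to verify is that specialising to the trivial character can only increase rank relative to the generic point. I would handle this by first tensoring with the fraction field of $\mathbb{Z}[H^1(M)]$, mapped into the Novikov field. That map is a field extension, so rank is unchanged. The remaining comparison is then ordinary upper semicontinuity for a finitely generated free complex over a Noetherian domain.
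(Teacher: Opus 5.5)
Your overall strategy is the natural one and matches what the paper indicates: the criterion is obtained from Ozsv\'ath--Szab\'o's non-vanishing of the twisted contact invariant, and your Step~1 is fine. There is, however, a genuine gap in Step~2, in two places.

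First, the ring map $\mathbb{Z}[H^1(M)]\to\Lambda_\omega$, $t^h\mapsto T^{\langle[\omega|_M],h\rangle}$, is injective only when the periods of $[\omega|_M]$ are linearly independent over $\mathbb{Q}$. In general its kernel $\mathfrak{p}$ is generated by the elements $t^k-1$ with $k\in K:=\ker\big([\omega|_M]\colon H^1(M)\cong H_2(M)\to\mathbb{R}\big)$. For instance, if $b_1(M)\ge 2$ and $[\omega|_M]$ is a non-zero integral class, the image is a one-variable Laurent ring. So there is no field map $\mathrm{Frac}(\mathbb{Z}[H^1(M)])\to\mathrm{Frac}(\Lambda_\omega)$, and acyclicity at the generic point of $\mathrm{Spec}\,\mathbb{Z}[H^1(M)]$ tells you nothing directly about the $\Lambda_\omega$-twisted complex. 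Relatedly, the inequality you quote is false for an arbitrary field $\mathbb{F}$ over $\mathbb{Z}[H^1]$: the complex $\mathbb{Z}[t^{\pm1}]\xrightarrow{1+t}\mathbb{Z}[t^{\pm1}]$ is rationally acyclic at $t=1$ but not at $t=-1$.

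Second, Ozsv\'ath--Szab\'o's Novikov ring has $\mathbb{Z}$ coefficients, so it is a domain but not a field. Acyclicity over $\mathrm{Frac}(\Lambda_\omega)$ only shows that the $\Lambda_\omega$-twisted homology is torsion, so mere non-vanishing of the contact class gives no contradiction. For example, $\mathbb{Z}[t^{\pm1}]\xrightarrow{2-t}\mathbb{Z}[t^{\pm1}]$ has zero untwisted homology. After $t\mapsto T^a$, with $a\neq 0$ of the appropriate sign, its homology is $\Lambda/(2-T^a)\neq 0$.

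Both points are repaired by the observation you are missing: $\mathfrak{p}$ is contained in the augmentation ideal. Let $C=\underline{\widehat{\mathrm{CF}}}(-M,\mathfrak{s}_\xi;\mathbb{Z}[H^1(M)])$. Replace $\mathbb{Z}[H^1(M)]$ by its image $R'=\mathbb{Z}[H^1(M)/K]\subset\Lambda_\omega$, a Laurent polynomial domain through which the augmentation $t^h\mapsto 1$ still factors, and set $C'=C\otimes R'$. Then:
\begin{itemize}
\item $\mathrm{Frac}(R')\subset\mathrm{Frac}(\Lambda_\omega)$ is a genuine field extension;
\item the untwisted complex is $C'\otimes_{R'}\mathbb{Z}$;
\item upper semicontinuity of the ranks of the differentials of $C'$ gives
\[ \dim_{\mathrm{Frac}(\Lambda_\omega)} H_*\big(C\otimes\mathrm{Frac}(\Lambda_\omega)\big)\le \mathrm{rank}_{\mathbb{Z}}\widehat{\mathrm{HF}}(-M,\mathfrak{s}_\xi). \]
\end{itemize}
For the left side to be positive, you need the twisted class $\underline{\widehat{c}}(\xi;[\omega])$ to be non-torsion over $\Lambda_\omega$, not merely non-zero. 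Localisation is exact, so a non-torsion class survives. Non-torsion is what Ozsv\'ath--Szab\'o's argument via a closed symplectic cap delivers: the class is sent by a $\Lambda_\omega$-linear cobordism map to a non-zero element of a torsion-free $\Lambda_\omega$-module. Non-torsion then passes from $c^+$ to $\widehat{c}$ exactly as non-vanishing does in your Step~1. Alternatively, run everything over a Novikov field with $\mathbb{Q}$ coefficients. With these corrections your argument goes through.
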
 

	When $M= M_f$ is the mapping torus of a pseudo-Anosov\index{pseudo-Anosov!map} map $f \colon S \rightarrow S$, deep results of Cotton-Clay \cite{cotton2009symplectic}, and Kutluhan--Lee--Taubes \cite{kutluhan2020hfI,kutluhan2020hfII,kutluhan2020hfIII,kutluhan2021hfIV,kutluhan2021hfV}, and Lee--Taubes \cite{lee2012periodic} identify next-to-top terms in Heegaard Floer homology of the mapping torus and the \emph{Periodic Floer homology}\index{Periodic Floer homology} of the suspension flow. Using the above non-vanishing criterion and the mentioned connection, Liu \cite{liu2024euler} obtains the following non-realisability criterion for weakly symplectically fillable contact structures. 
	
	\begin{prop}[Non-realisability criterion]
		Let $S$ be an oriented connected closed surface of genus $g \geq 3$, and
	$f \colon S \rightarrow S$ be a pseudo-Anosov homeomorphism. Let $M_f$ be the mapping torus of $f$, and denote by $e_f \in H^2(M_f ; \mathbb{R})$ the real Euler class of the associated fibration of $M_f$ over $S^1$.	If $a \in H^2(M_f ; \mathbb{R})$ is an integral lattice point satisfying $\langle a , [S] \rangle =1$, and if the Poincar\'{e} dual $\mathrm{PD}(a) \in H_1(M_f ; \mathbb{R})$ is not represented by any $1$-periodic trajectory of the suspension flow, then the even lattice point $e_f + 2a$ is not the real Euler class of any weakly symplectically fillable contact structure on $M_f$, nor is $e_f - 2a$.
	\end{prop}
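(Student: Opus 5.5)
We argue by contradiction. Suppose $\xi$ is a weakly symplectically fillable contact structure on $M_f$ with real Euler class $e(\xi) = e_f + 2a$; the case $e_f - 2a$ is symmetric, replacing $a$ by $-a$ or reversing the orientation of the fibre. By the non-vanishing criterion above, $\widehat{\mathrm{HF}}(-M_f, \mathfrak{s}_\xi) \neq 0$. The plan is to show that the assumption on $\mathrm{PD}(a)$ forces this group to vanish.

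\textbf{Step 1: locate $\mathfrak{s}_\xi$ in the Alexander grading.} The first Chern class of $\mathfrak{s}_\xi$ is $e(\xi)$. Let $\mathfrak{s}_f$ be the canonical $\mathrm{Spin}^c$ structure of the fibration, represented by the suspension flow direction, with $c_1(\mathfrak{s}_f) = e_f$. Since $c_1(\mathfrak{s} + h) = c_1(\mathfrak{s}) + 2h$, we can write $\mathfrak{s}_\xi = \mathfrak{s}_f + h$ with $2h = 2a$ in real cohomology; $h$ differs from the lift of $a$ only by torsion. Evaluating on the fibre gives
\[ \langle c_1(\mathfrak{s}_\xi), [S] \rangle = \chi(S) + 2 = 2 - 2g + 2 = 4 - 2g. \]
This is exactly the next-to-top Alexander grading with respect to $[S]$. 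Orientation reversal $-M_f$ and conjugation symmetry exchange the signs here, and I would track them so that the grading lands in the next-to-top slot for $-M_f$ rather than its negative.

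\textbf{Step 2: identify the next-to-top group with periodic Floer homology.} By Kutluhan--Lee--Taubes, $\widehat{\mathrm{HF}}$ is isomorphic to embedded contact homology, and via Lee--Taubes and Cotton-Clay the relevant summand is isomorphic to periodic Floer homology of $f$ in degree one. Here degree one means orbit sets meeting $S$ once, and the orbit sets must lie in the homology class $\mathrm{PD}(a)$ that corresponds to $h$ under $\mathfrak{s} \mapsto \mathfrak{s}_f + h$. The hypothesis $g \ge 3$ is what places degree one strictly inside the range where this isomorphism with the hat version holds, namely degrees below $g-1$. A degree-one orbit set is a single $1$-periodic orbit of the suspension flow, with multiplicity one. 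For a pseudo-Anosov map, Cotton-Clay computes PFH from the Nielsen fixed point data, and the chain complex in degree one is generated by the $1$-periodic orbits in the given class. If no $1$-periodic trajectory represents $\mathrm{PD}(a)$, the complex is zero, so $\widehat{\mathrm{HF}}(-M_f, \mathfrak{s}_\xi)$ vanishes in every torsion refinement of $h$. This contradicts Step 1.

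\textbf{Main obstacle.} The delicate step is the precise bookkeeping in Step 2. One must check that the correspondence between $\mathrm{Spin}^c$ structures and homology classes of orbit sets (the map $\mathfrak{s}_f + h \leftrightarrow \mathrm{PD}(h)$) has the right sign conventions under orientation reversal. One must also check that torsion ambiguity is harmless, which it is because every torsion refinement vanishes. I would first try to settle these conventions by testing them on the fibre class itself, where the top grading gives $\mathbb{Z}$.
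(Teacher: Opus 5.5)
Your Steps 1--2 follow the route the paper indicates, which cites Liu rather than giving a proof. The non-vanishing criterion is played against the identification of the next-to-top summand with degree-one periodic Floer homology, i.e. the symplectic Floer homology of $f$ computed by Cotton-Clay, whose complex in the class $\mathrm{PD}(a)$ has no generators. Your use of $g\ge 3$ to keep degree one away from $d=g-1$ is right, and so is your torsion bookkeeping.

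Two imprecisions should be fixed. First, the Lee--Taubes and Kutluhan--Lee--Taubes isomorphisms identify degree-one PFH with the monopole Floer cohomology $\widehat{\mathrm{HM}}^*(M_f,\mathfrak{s})$, and hence with $\mathrm{HF}^+(-M_f,\mathfrak{s})$, not with $\widehat{\mathrm{HF}}$. The hat on $\mathrm{HM}$ is not the Heegaard Floer hat, and it is PFH, not ECH, that is attached to the mapping torus. So you should conclude $\mathrm{HF}^+(-M_f,\mathfrak{s}_\xi)=0$, and then get $\widehat{\mathrm{HF}}(-M_f,\mathfrak{s}_\xi)=0$ from the exact triangle $\widehat{\mathrm{HF}}\to\mathrm{HF}^+\xrightarrow{U}\mathrm{HF}^+$. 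Second, the Floer generators are fixed points of a smooth representative, not of the pseudo-Anosov homeomorphism itself. You need the remark that fixed points created by smoothing at singularities lie in the Nielsen classes of fixed points of $f$, so they represent the same homology classes as $1$-periodic trajectories of the suspension flow.

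The step that actually fails is your reduction of $e_f-2a$ to the first case. Replacing $a$ by $-a$ violates $\langle a,[S]\rangle=1$. Reversing the fibre orientation replaces $e_f$ by $-e_f$ and $[S]$ by $-[S]$, and does not turn $e_f-2a$ into a class $e'+2a'$ with $\langle a',[\bar S]\rangle=1$. In fact no symmetry can do this. Conjugation, coorientation reversal and changes of orientation convention all preserve $|\langle\,\cdot\,,[S]\rangle|$, whereas $|\langle e_f+2a,[S]\rangle|=2g-4$ and $|\langle e_f-2a,[S]\rangle|=|2-2g-2|=2g$.

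The correct argument is elementary. Since $|\langle e_f-2a,[S]\rangle|=2g>2g-2=-\chi(S)$, the class lies outside the dual unit ball. Weakly fillable structures are tight, so it is excluded by Eliashberg's inequality, or equivalently by the adjunction inequality for $\widehat{\mathrm{HF}}$. If the intended second class is the antipodal one, $-(e_f+2a)$, it follows from the first case in either of two ways. One is to reverse the coorientation of $\xi$: this negates the Euler class and preserves weak fillability, using the filling $(W,-\omega)$. The other is the conjugation symmetry $\widehat{\mathrm{HF}}(Y,\mathfrak{s})\cong\widehat{\mathrm{HF}}(Y,\bar{\mathfrak{s}})$.
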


	Given a fibered 3-manifold $N$ with pseudo-Anosov monodromy $f$, the \emph{Fried cone of homology directions}\index{Fried cone of homology directions} $\mathcal{C}_f$ is a polyhedral cone in $H_1(N)$, defined as the closure of homology classes of periodic orbits of the suspension flow of $f$. It is known that the Fried cone in $H_1(N)$ is dual to the vertex of the dual Thurston norm ball in $H^2(N)$ that is the Euler class $e_f$ of the fibration of $N$ over $S^1$. See \cite{fried1979fibrations}. Having the above non-realisability criterion in mind, given a closed hyperbolic 3-manifold $M$, we are interested in finding a covering $\tilde{M}$ of $M$ that fibers over the circle with monodromy say $f$ and such that the Fried cone $\mathcal{C}_f \in H_1(\tilde{M})$ contains no 1-periodic trajectory of the suspension flow. Liu proves Theorem \ref{thm:Liu-contact} by constructing a fibered finite cover $\tilde{M}$ of $M$ with $b_1(\tilde{M}) \geq 2$, and such that some boundary face of the Fried cone $\mathcal{C}_f$ has a scaling-invariant open dense subset, such that no periodic trajectory represents any rational homology class therein. His proof of this statement relies on a virtual construction and uses the virtual compact specialisation\index{virtual compact specialisation theorem} of closed hyperbolic 3–manifold groups due to Agol \cite{agol2013virtual} and Wise \cite{wise2012riches}. We refer the reader to \cite{liu2024euler} for this construction. 

	\section{Quasigeodesic and pseudo-Anosov flows}
	
	\label{sec: flows}

	A flow on a closed 3-manifold $M$ is called \emph{quasigeodesic}\index{flow!quasigeodesic}\index{quasigeodesic flow} if the lifted flow lines to the universal cover $\tilde{M}$ of $M$ are quasigeodesics with respect to the lift of a metric on $M$ to $\tilde{M}$. By compactness of $M$, the property of being quasigeodesic does not depend on the choice of metric on $M$. A theorem of Zeghib \cite{zeghib1993feuilletages} states that there is no flow on a closed hyperbolic manifold such that all flow lines are geodesics. However, quasigeodesic flows exist on many hyperbolic 3-manifolds. When $M$ is hyperbolic, any quasigeodesic in the universal cover $\tilde{M} \cong \mathbb{H}^3$ is of bounded distance to a geodesic. This property is very useful in understanding quasigeodesic flows on hyperbolic 3-manifolds. 
	
	Cannon and Thurston \cite{cannon2007group} studied the first examples of quasigeodesic flows on fibered hyperbolic 3-manifolds. Let $S$ be a closed orientable surface of genus $g \geq 2$, and $\phi \colon S \rightarrow S$ be a  pseudo-Anosov homeomorphism of $S$. Denote the mapping torus of $\phi $ by $M_\phi$:
	\[ M_\phi := (S \times \mathbb{R}) / \sim, \]
	with the equivalence relation $(x , n) \sim (f(x) , n+1)$ for every $x \in S$ and $n \in \mathbb{Z}$. Therefore $M_\phi$ fibers over $S^1$ with fiber $S$. By Thurston's hyperbolisation theorem, $M_\phi$ is hyperbolic. The flow lines $x \times \mathbb{R}$ on $S \times \mathbb{R}$ descend to a flow $\Phi$ on $M_\phi$. Cannon and Thurston showed that $\Phi$ is a quasigeodesic flow. Note that $\Phi$ is transverse to the depth $0$ foliation of $M_\phi$, which is the fibration of $M_\phi$ over the circle. 
	
	A flow on a closed 3-manifold is \emph{pseudo-Anosov}\index{flow!pseudo-Anosov}\index{pseudo-Anosov!flow} if it is locally modeled on the suspension flow of a pseudo-Anosov surface homeomorphism, even though globally the flow need not be a suspension flow, see \cite{mosher1992dynamical} for the precise definition. Mosher \cite{mosher1examples} produced examples of quasigeodesic flows transverse to a class of depth one foliations in hyperbolic 3-manifolds. Mosher \cite{mosher1996laminations}, following Gabai, showed that given a finite depth taut foliation $\mathcal{F}$ of a hyperbolic 3-manifold there is a pseudo-Anosov flow \emph{almost transverse}\index{pseudo-Anosov!flow!almost transverse to a foliation} to $\mathcal{F}$. Almost transverse means that the flow is transverse to $\mathcal{F}$ after blowing-up a finite number of closed orbits of the flow. See \cite[Section 3.5]{mosher1996laminations} for the precise definition. In particular the Euler class of the oriented normal plane field to the flow is equal to the Euler class of the oriented tangent plane field to the foliation. Fenley and Mosher \cite{fenley2001quasigeodesic} proved that given a finite depth taut foliation, any pseudo-Anosov flow almost transverse to the foliation is quasigeodesic as well. In particular, the flows constructed by Mosher \cite{mosher1996laminations} are quasigeodesic. Mosher's construction of pseudo-Anosov flows almost transverse to finite depth taut foliations is largely unwritten, although the first part is available at \cite{mosher1996laminations}. However, Landry and Tsang are in the process of writing down a proof of this result. See \cite{landry2024endperiodic}. Mosher \cite{mosher1992dynamical, mosher1992dynamicalII} proved that the Euler class of every quasigeodesic pseudo-Anosov flow has dual Thurston norm at most one. He showed this by proving an efficient intersection theorem between embedded incompressible surfaces and the flow, and then comparing index sum formulae. Moreover, analogues of Inequality (\ref{eq:Thurston inequality}) hold for Euler classes of pseudo-Anosov flows, and also for quasigeodesic flows. See Section \ref{sec:universal circle} for more on this, which goes via universal circle actions. 
	
	Since vertices of the unit ball of the dual Thurston norm are realised by Euler classes of taut foliations (Theorem \ref{thm:Gabai-euler class one for vertices}), it follows that they are also realised as the Euler classes of quasigeodesic pseudo-Anosov flows.  In \cite{yazdi2020thurston}, the author asked the following question. 
	
	\begin{question}
		Let $M$ be a closed orientable hyperbolic 3-manifold with first Betti number at least one. Is every integral class $a \in H^2(M ; \mathbb{R})$ of dual norm one and satisfying the parity condition realised as the Euler class of a pseudo-Anosov (respectively quasigeodesic) flow?
		\label{que:euler class one for tight contact structures}
	\end{question}

	\section{Actions on the circle}
	
	\label{sec: actions on cricle}
	
	\subsection{Milnor--Wood inequality}
	
	The group $\mathrm{PSL}(2, \mathbb{R})$ acts on $S^1 = \mathbb{R} \cup \{ \infty \}$ by projective transformations\index{projective transformations} as 
	\[ \begin{bmatrix}
		a & b \\
		c & d
	\end{bmatrix} \colon  x  \longmapsto \frac{ax+ b}{cx+d}. \] Let $S$ be a closed orientable surface and
	\[ \rho \colon \pi_1(S) \rightarrow \mathrm{PSL}(2, \mathbb{R}) \]
	be a representation\index{representation}. The representation $\rho$ defines an $S^1$-bundle over $S$ via the suspension construction, and we denote the Euler class of this $S^1$-bundle by $e(\rho) \in H^2(S ; \mathbb{Z})$. Milnor \cite{milnor1958existence} showed that 
	\begin{eqnarray}
		|\langle e(\rho) , [S] \rangle | \leq \chi_-(S),
		\label{Minor-Wood inequality}
	\end{eqnarray}
	where $[S] \in H_2(S ; \mathbb{Z})$ is the fundamental class of $S$. He also showed that for a closed orientable surface $S$, every integral class $a \in H^2(S ; \mathbb{Z})$ satisfying Inequality (\ref{Minor-Wood inequality}) is realised as the Euler class of a representation into $\mathrm{PSL}(2, \mathbb{R})$.  Wood \cite{wood1971bundles} generalised Inequality (\ref{Minor-Wood inequality}) to representations 
	\[ \rho \colon \pi_1(S) \rightarrow \mathrm{Homeo}^+(S^1), \]
	where $\mathrm{Homeo}^+(S^1)$ is the group of orientation-preserving homeomorphisms of $S^1$. This is now known as the \emph{Milnor--Wood inequality}\index{Milnor--Wood inequality}. 
	
\begin{thm}[Milnor--Wood inequality]
	Let $S$ be a closed orientable surface of genus $g$. Every homomorphism $\rho \colon \pi_1(S) \rightarrow \mathrm{Homeo}^+(S^1)$ satisfies 
	\[  |\langle e(\rho) , [S] \rangle | \leq \chi_-(S). \]
\end{thm}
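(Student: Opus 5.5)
The plan is to follow Wood's classical argument through the rotation number and the central extension of $\mathrm{Homeo}^+(S^1)$ by $\mathbb{Z}$. For genus $g \leq 1$ the group $\pi_1(S)$ is abelian, and the claim is that the Euler number vanishes. This is consistent with $\chi_-(S)=0$. For an abelian group one can use an invariant probability measure on $S^1$, which exists because the image is amenable, to see that the translation number restricted to lifts is a homomorphism. Then $e(\rho)=0$ follows from the computation below with $g=1$. So the main case is $g \geq 2$, where $\chi_-(S) = 2g-2$.

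Let $\widetilde{\mathrm{Homeo}}^+(S^1)$ be the group of homeomorphisms of $\mathbb{R}$ commuting with $x\mapsto x+1$; it is a central extension of $\mathrm{Homeo}^+(S^1)$ by $\mathbb{Z}$. Use the standard presentation $\pi_1(S)=\langle a_1,b_1,\dots,a_g,b_g \mid \prod_i [a_i,b_i]=1\rangle$, and choose arbitrary lifts $\tilde A_i,\tilde B_i$ of $\rho(a_i),\rho(b_i)$. The commutators $[\tilde A_i,\tilde B_i]$ do not depend on the choice of lifts. Moreover $\prod_i[\tilde A_i,\tilde B_i]$ is a lift of the identity, so it equals translation by an integer $n$. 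A standard obstruction-theory computation for the suspension $S^1$-bundle, using the cell structure of $S$ with one $2$-cell attached along the relator, identifies $\langle e(\rho),[S]\rangle=\pm n$.

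Next bound $n$ using Poincar\'e's translation number $\tau(\tilde f)=\lim_k (\tilde f^k(0))/k$. I will establish the basic facts: $\tau$ is homogeneous, conjugation invariant, and satisfies $\tau(\tilde f\circ T_m)=\tau(\tilde f)+m$. Crucially, it is a quasimorphism: $|\tau(\tilde f\tilde g)-\tau(\tilde f)-\tau(\tilde g)|\le 1$. Alternatively one can work with the sharper elementary estimates for commutators: every commutator $[\tilde f,\tilde g]$ satisfies $|\tau([\tilde f,\tilde g])|<1$, or more precisely $x-1<[\tilde f,\tilde g](x)<x+1$ for all $x$. This holds because $\tilde f\tilde g\tilde f^{-1}$ and $\tilde g$ differ by less than one full turn at every point.

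The key step, and the main obstacle, is to get the sharp bound $|n|\le 2g-2$ rather than the naive $|n|\le 2g-1$. I would follow Wood's trick of using the pointwise displacement bound $\tilde h(x)-x \in (-1,1)$ for a single commutator $\tilde h$. For a product of two commutators, one shows that the displacement is strictly less than $3$, hence $\le 2$ as an integer translation would require. Inductively, a product of $g$ commutators that is an integer translation $T_n$ has $|n|\le 2g-1$ from the strict inequalities. To improve this to $2g-2$, I would use the further observation that $\prod_{i<g}[\tilde A_i,\tilde B_i]=T_n[\tilde B_g,\tilde A_g]$. The left side has displacement bounded by $2(g-1)-1<$ strictly less than $2g-3+1$, while the right side has translation number exactly $n$, because $\tau([\tilde B_g,\tilde A_g])$ must be $0$. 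Here I would prove the lemma that a commutator with $|\tau|<1$ which is conjugate-translation-equivalent to something with a fixed point has $\tau=0$. One checks that $[\tilde f,\tilde g]$ has a fixed point whenever $\tilde f$ or $\tilde g$ does, and reduces to this case by composing with translations, which does not change the commutator. Comparing translation numbers via the quasimorphism bound, $|n|\le 2(g-1)$ follows.

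Finally, I would double-check the bookkeeping by sanity-testing on the Fuchsian representation, which achieves $|e|=2g-2$. This confirms the bound is sharp and that no off-by-one error crept in.
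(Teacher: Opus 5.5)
Your setup (lifting to $\widetilde{\mathrm{Homeo}^+}(S^1)$, getting $\prod_i[\tilde A_i,\tilde B_i]=T_n$, and identifying $\langle e(\rho),[S]\rangle=\pm n$) is the standard one and is fine. The gap is in your key step: the commutator estimate is false, and so are the lemmas behind the ``improvement''.

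\textbf{The commutator bound is false.} A commutator need not satisfy $x-1<[\tilde f,\tilde g](x)<x+1$, nor even $|\tau([\tilde f,\tilde g])|<1$. The truth is $|\tau([\tilde f,\tilde g])|\le 1$, and equality can occur. Your heuristic only shows that $\tilde f\tilde g\tilde f^{-1}$ and $\tilde g$ differ by less than $2$ at each point, since each displacement lies in an interval of length $<1$ containing their common translation number.

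The Fuchsian test you propose exposes this. In genus $2$ we have $[\tilde A_2,\tilde B_2]=[\tilde A_1,\tilde B_1]^{-1}T_{\pm 2}$, so the two commutators have translation numbers summing to $\pm 2$. Each $\rho([a_i,b_i])$ is hyperbolic and hence fixes a point of $S^1$, so these translation numbers are integers of absolute value at most $1$. That forces both to equal $\pm 1$. More bluntly: if each of $g$ commutators moved every point by less than $1$, their product would move every point by less than $g$, giving $|n|\le g-1$, which contradicts sharpness.

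\textbf{The improvement step also fails.} The same example shows $\tau([\tilde B_g,\tilde A_g])\neq 0$ in general. It also refutes the claim that $[\tilde f,\tilde g]$ has a fixed point whenever $\tilde f$ does: $\rho(a_1),\rho(b_1)$ are hyperbolic, so $\tilde A_1,\tilde B_1$ can be chosen with fixed points, yet $\tau([\tilde A_1,\tilde B_1])=\pm1$. Finally, composing with integer translations cannot create a fixed point unless $\rho(b_g)$ already fixes a point of $S^1$.

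\textbf{What is true: Wood's one-sided estimate.} Put $m(\tilde h)=\min_x(\tilde h(x)-x)$ and $M(\tilde h)=\max_x(\tilde h(x)-x)$. Then
\begin{itemize}
\item $m(\tilde h)\le\tau(\tilde h)\le M(\tilde h)$ and $M(\tilde h)-m(\tilde h)<1$;
\item $m(\tilde a\tilde b)\le m(\tilde a)+M(\tilde b)<m(\tilde a)+m(\tilde b)+1$.
\end{itemize}
Write $[\tilde f,\tilde g]=\tilde f\cdot(\tilde g\tilde f^{-1}\tilde g^{-1})$, where the second factor is conjugate to $\tilde f^{-1}$. This gives $m([\tilde f,\tilde g])<\tau(\tilde f)+(1-\tau(\tilde f))=1$. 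So only the \emph{minimal} displacement of a commutator is below $1$; its maximal displacement may exceed $1$.

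By induction, $n=m(T_n)<\sum_i m([\tilde A_i,\tilde B_i])+(g-1)<2g-1$, so $n\le 2g-2$ by integrality. Applying the same bound to $T_{-n}=[\tilde B_g,\tilde A_g]\cdots[\tilde B_1,\tilde A_1]$ gives $n\ge 2-2g$.

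So there is no off-by-one to repair. The strict inequalities plus integrality of $n$ already give the sharp bound, and your ``$|n|\le 2g-1$'' was a miscount; you had yourself obtained $n\le 2$ for $g=2$.

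\textbf{Alternative repairs.} If you keep only the non-strict quasimorphism bound $|n|\le 2g-1$, use the standard covering trick. A degree-$d$ cover has genus $d(g-1)+1$ and Euler number $dn$, so $|n|\le 2g-2+\frac{1}{d}$ for every $d$, hence $|n|\le 2g-2$. The paper itself only cites Wood and Ghys, recasting the inequality as $\|e_b^{\mathbb{R}}\|=\frac{1}{2}$. Pairing this with the $\ell^1$-norm $4g-4$ of $[S]$ gives the same bound.
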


	Now let $M$ be a closed orientable 3-manifold, and 
	\[ \rho \colon \pi_1(M) \rightarrow \mathrm{Homeo}^+(S^1) \]
	be a representation. By the Milnor--Wood inequality, for every embedded orientable surface $S$ in $M$ Inequality (\ref{Minor-Wood inequality}) holds. In other words, the dual Thurston norm of $e(\rho) \in H^2(M ; \mathbb{Z})$ is at most one. In \cite{yazdi2020thurston}, the author asked the following question.
	
	\begin{question}
		Let $M$ be a closed orientable hyperbolic 3-manifold with first Betti number at least one. Is every integral class $a \in H^2(M ; \mathbb{R})$ of dual norm one realised as the Euler class of a representation $\rho \colon \pi_1(M) \rightarrow \mathrm{Homeo}^+(S^1)$?
		\label{que:euler class one for actions}
	\end{question}

	We can also ask for variations where the cohomology class $a \in H^2(M ; \mathbb{R})$ has norm at most one, or ask for representations of a particular regularity class, or restrict to a class of 3-manifolds.  
	
	\subsection{Bounded cohomology} Bounded cohomology was introduced by Johnson \cite{johnson1972cohomology} and Gromov \cite{gromov1982volume}. The natural framework for the Milnor--Wood inequality is in the context of bounded cohomology, as developed by Ghys \cite{ghys1987groupes}. In this section we first recall the definition of group cohomology, and then make the necessary adjustments to define bounded cohomology of groups. Ghys' survey \cite{ghys2001groups} is an excellent reference for the material in this subsection and the next. Let $\Gamma$ be a (discrete) group, and $E\Gamma$ be the semi-simplicial set whose vertices are elements of $\Gamma$ and whose $k$-simplices are $(k+1)$-tuples of elements of $\Gamma$ for any integer $k \geq 0$. The $i$-th face of a simplex $(\gamma_0 , \gamma_1 , \cdots, \gamma_k)$ is $(\gamma_0 , \cdots , \hat{\gamma_i}, \cdots \gamma_k)$ where $\hat{\gamma_i}$ indicates that $\gamma_i$ is omitted. Then $E \Gamma$ is contractible since it is the full simplex over the set $\Gamma$. There is a natural simplicial action of $\Gamma$ on $E \Gamma$ induced by the left action of $\Gamma$ on itself 
	\[ \gamma \cdot (\gamma_0 , \gamma_1, \cdots , \gamma_k) := (\gamma \gamma_0 , \gamma \gamma_1, \cdots, \gamma \gamma_k). \]
	This simplicial action is free and permutes the set of $k$-simplices. The quotient of $E \Gamma $ by the action of $\Gamma$ is denoted by $B \Gamma$ and is a \emph{classifying space}\index{classifying space} for $\Gamma$, in the sense that $\pi_1(B \Gamma) \cong \Gamma$ and that higher homotopy groups of $B \Gamma$ are trivial since $\pi_n(B \Gamma) \cong \pi_n(E \Gamma) \cong \{ 0 \}$ for $n \geq 2$. The cohomology groups of $\Gamma$ are defined as the cohomology groups of $B \Gamma$. Algebraically, this can be described as follows. Define a \emph{$k$-cochain} of $\Gamma$ with coefficients in some abelian group $A$ as a map $c \colon \Gamma^{k+1} \rightarrow  A$ that is invariant under the action of $\Gamma$, that is  $c(\gamma_0 , \cdots, \gamma_k) = c(\gamma \gamma_0 , \cdots , \gamma \gamma_k)$. Such cochains are called \emph{homogeneous}\index{cochain!homogeneous}. Let $C^k(G , A)$ be the set of $k$-cochains, which is an abelian group. There is a natural coboundary map 
	\[ d_k \colon C^k(\Gamma , A) \rightarrow C^{k+1}(\Gamma , A) \] 
	defined as 
	\[ d_k c (\gamma_0 , \cdots, \gamma_{k+1}):= \sum_{i=0}^{k+1} (-1)^i c(\gamma_0 , \cdots, \hat{\gamma_i}, \cdots, \gamma_{k+1}).\]
	It is easy to check that $d_{k+1} \circ d_k = 0$. The cohomology group $H^k(\Gamma , A)$ is defined as the quotient of cocycles (i.e. the kernel of $d_{k}$) by coboundaries (i.e. the image of $d_{k-1}$). 

	Any homogeneous cochain $c \colon \Gamma^{k+1} \rightarrow A$ can be equivalently described by the cochain $\bar{c} \colon \Gamma^k \rightarrow A$ defined as 
	\[ c(\gamma_0, \cdots , \gamma_k) = \bar{c}(\gamma_0^{-1} \gamma_1, \cdots, \gamma_0^{-1} \gamma_k). \]
	Such cochains $\bar{c}$ are called \emph{inhomogeneous}\index{cochain!inhomogeneous}. Conversely an inhomogeneous $k$-cochain defines a corresponding homogeneous $k$-cochain.
	
	Now let $\Gamma$ be a group as before, and $A = \mathbb{Z}$ or $\mathbb{R}$. Let $C^k_b(\Gamma , A)$ be the set of homogeneous $k$-cochains that are bounded (as real-valued functions). Then $C^k_b(\Gamma , A)$ is a subgroup of $C^k(\Gamma , A)$, and the coboundary $d_k$ of a bounded $k$-cochain is again a bounded $(k+1)$-cochain. The bounded cohomology groups\index{bounded!cohomology group} $H^*_b(\Gamma , A)$ are defined as cohomology groups of the complex $(C^k_b(\Gamma, A), d_k)$. There is a natural inclusion 
	\[ C^k_b(\Gamma, A) \rightarrow C^{k}(\Gamma , A) \]
	and this induces a homomorphism 
	\[ H^k_b(\Gamma, A) \rightarrow H^k(\Gamma , A)\] 
	called the \emph{comparison map}\index{comparison map}. In general the comparison map is neither injective nor surjective. However, when $\Gamma$ is Gromov-hyperbolic\index{Gromov-hyperbolic}, the comparison map is surjective for all $k \geq 2$, see \cite{mineyev2001straightening}. 
	
	\subsection{Milnor--Wood inequality revisited}
	Let $\widetilde{\mathrm{Homeo}^+}(S^1)$ be the group of orientation-preserving homeomorphisms of $\mathbb{R}$ that commute with integral translations, i.e.
		\[ \widetilde{\mathrm{Homeo}^+}(S^1) := \{ f \in \mathrm{Homeo}^+(\mathbb{R}) | f (x+1) = f(x)+1 \text{ for every } x \text{ in } \mathbb{R}\}. \]
		There is a short exact sequence 
		\begin{eqnarray}
				0 \rightarrow \mathbb{Z} \rightarrow \widetilde{\mathrm{Homeo}^+}(S^1) \xrightarrow{\pi} \mathrm{Homeo}^+(S^1) \rightarrow 0, 
				\label{eqn: central extension}
			\end{eqnarray} 
		where $\mathbb{Z}$ is identified with the subgroup of integral translations in $\widetilde{\mathrm{Homeo}^+}(S^1)$. In particular this is a central extension of $\mathrm{Homeo}^+(S^1)$ by $\mathbb{Z}$.
	Let $s \colon \mathrm{Homeo}^+(S^1) \rightarrow  \widetilde{\mathrm{Homeo}^+}(S^1)$ be a set-theoretic section (not necessarily a homomorphism). Define an inhomogeneous 2-cochain on $\mathrm{Homeo}^+(S^1)$ with values in $\mathbb{Z}$ by 
	\[ \overline{c}(f, g) := s(f \circ g)^{-1} s(f) s(g).  \]
	Note that the projection under the map $\pi$ of the right hand side onto $\mathrm{Homeo}^+(S^1)$ is trivial, and therefore we can identify it with an element of $\mathbb{Z}$. Then it is a computation to see that $c$ is a cocycle. Moreover, the cohomology class of $c$ does not depend on the choice of the section $s$, and is called the \emph{Euler class}. The Euler class is denoted by $e \in H^2(\mathrm{Homeo}^+(S^1) , \mathbb{Z})$, and it is known that it generates $H^2(\mathrm{Homeo}^+(S^1) , \mathbb{Z})$.  The preceding discussion can be generalised to see that if $A$ is an abelian group then every central extension\index{central extension} 
	\[ 0 \rightarrow A \rightarrow \hat{\Gamma} \rightarrow \Gamma \rightarrow 0 \]
	defines an element of $H^2(\Gamma , A)$, and conversely elements of $H^2(\Gamma , A)$ correspond to isomorphism classes of central extensions of $\Gamma$ by $A$. 
	
	Now for the central extension (\ref{eqn: central extension}) we can choose a canonical section $s$ by requiring that $s(f)(0) \in [0 , 1)$. It is easy to check that for this section the value
	\[ \overline{c}(f, g)(0) =  \big(s(f \circ g)^{-1} s(f) s(g) \big) (0)\]
	lies in $[0, 2)$. Since this is an integer, it must be one of the numbers $0$ or $1$. Therefore, the cocycle $\overline{c}$ only takes values in $\{ 0, 1\}$; in particular it defines a bounded cocycle. The bounded cohomology class of this bounded cocycle does not depend on the choice of the origin $0$, and is called the \emph{bounded Euler class}\index{bounded!Euler class}\index{Euler class!bounded}, and is denoted by $e_b \in H^2_b(\mathrm{Homeo}^+(S^1) , \mathbb{Z})$. Thinking of the bounded Euler class with real coefficients instead, we obtain a \emph{real bounded Euler class} $e_b^\mathbb{R} \in H^2_b(\mathrm{Homeo}^+(S^1) , \mathbb{R})$. 
	
	Given a bounded $k$-cochain $c \in C^k(\Gamma, \mathbb{R})$, define its norm $||c||$ as the supremum of the value of $c(\gamma_0 , \gamma_1, \cdots, 
	\gamma_k)$. Define the \emph{norm} of a bounded cohomology class as the infimum of the norm of any of its cocycle representatives. This is in general a semi-norm, i.e. the norm of a non-trivial class could be $0$. The following can be thought as the Milnor--Wood inequality in the language of bounded cohomology.

	\begin{thm}
		The real bounded Euler class $e_b^\mathbb{R} $ has norm $\frac{1}{2}$. 
	\end{thm}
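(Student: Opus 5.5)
The plan is to prove the two inequalities $\|e_b^\mathbb{R}\| \leq \frac{1}{2}$ and $\|e_b^\mathbb{R}\| \geq \frac{1}{2}$ separately. The upper bound comes from an explicit cocycle representative. The lower bound comes from evaluating on fundamental classes of surfaces and using the Fuchsian (equality) case of Milnor's inequality (\ref{Minor-Wood inequality}).

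\emph{Upper bound.} Start from the canonical cocycle $\overline{c}$ constructed above, which takes values in $\{0,1\}$. Let $b$ be the constant inhomogeneous $1$-cochain $b \equiv \frac{1}{2}$; it is bounded. Its coboundary is
\[ db(f,g) = b(g) - b(fg) + b(f) = \tfrac{1}{2}, \]
so the constant $2$-cochain $\frac{1}{2}$ is a bounded coboundary. Hence $\overline{c} - \frac{1}{2}$ is a bounded cocycle representing the same class $e_b^\mathbb{R}$. It takes values in $\{-\frac{1}{2}, \frac{1}{2}\}$, which gives $\|e_b^\mathbb{R}\| \leq \frac{1}{2}$.

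\emph{Lower bound.} Let $S_g$ be a closed surface of genus $g \geq 2$ and $\rho \colon \pi_1(S_g) \to \mathrm{PSL}(2,\mathbb{R}) \subset \mathrm{Homeo}^+(S^1)$ a Fuchsian representation, so that $|\langle e(\rho), [S_g]\rangle| = 2g-2$.

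1. Identify $\langle e(\rho), [S_g]\rangle$ with the evaluation of the pulled-back cocycle $\rho^*\overline{c}$ on a group-cycle representing $[S_g]$. This is the standard compatibility between the group-theoretic Euler class of the central extension (\ref{eqn: central extension}) and the Euler class of the suspended circle bundle.

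2. Represent $[S_g]$ by a singular cycle with few simplices. Cut $S_g$ into one $4g$-gon with all vertices identified, and triangulate it into $4g-2$ triangles. Each triangle has edges labelled by elements of $\pi_1(S_g)$, so it determines a $2$-simplex of $B\pi_1(S_g)$. Summing these with signs gives a cycle $z$ with $\ell^1$-norm $4g-2$ representing $[S_g]$.

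3. For any bounded cocycle $c'$ cohomologous in $H^2_b$ to $\overline{c}$, the pullback $\rho^*c'$ represents $e(\rho)$ in ordinary cohomology. Therefore
\[ 2g-2 = |\langle \rho^* c', z\rangle| \leq (4g-2)\,\|c'\|. \]

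Taking the infimum over $c'$ gives $\|e_b^\mathbb{R}\| \geq \frac{2g-2}{4g-2}$ for every $g \geq 2$. Letting $g \to \infty$ yields $\|e_b^\mathbb{R}\| \geq \frac{1}{2}$.

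The main obstacle is step 1 together with the Fuchsian equality. Concretely, one must check that the cocycle $s(fg)^{-1}s(f)s(g)$, evaluated on the triangulated polygon, reproduces the obstruction to a section of the suspended bundle. It should give the translation number of the product of commutators of the lifts $s(\rho(a_i))$ and $s(\rho(b_i))$. For the Fuchsian representation this translation number equals $\pm(2g-2)$, which is Milnor's equality case. I would take the Fuchsian value $2g-2$ as known from Milnor's theorem quoted above. The identification in step 1 then reduces to the standard relation between the Euler number of a flat circle bundle and the lifted relator in $\widetilde{\mathrm{Homeo}^+}(S^1)$.
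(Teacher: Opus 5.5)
The paper gives no proof of this statement; it defers to Ghys' survey. Your argument is correct and is a mild variant of the standard one.

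The upper bound is exactly the usual argument: subtract the bounded coboundary of the constant $1$-cochain $\frac{1}{2}$ from the $\{0,1\}$-valued cocycle $\overline{c}$.

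For the lower bound, the usual presentation fixes one genus and writes $2g-2 = |\langle e(\rho),[S_g]\rangle| \leq \|e_b^{\mathbb{R}}\| \cdot \|[S_g]\|$, using the simplicial volume bound $\|[S_g]\| \leq 4g-4$. That bound is itself proved by applying a $(4g-2)$-triangle cycle like yours to finite covers of large degree. So your limit $g \to \infty$ is the same mechanism with the covering step unfolded. It needs nothing beyond Milnor's realisation of $\pm(2g-2)$, which the paper quotes, and the standard identification of the suspension Euler number with $\langle \rho^*\overline{c}, z\rangle$.

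A genuinely different route avoids surfaces altogether. Pulled back to $\widetilde{\mathrm{Homeo}^+}(S^1)$, the class $e_b^{\mathbb{R}}$ is, up to sign, the class of the coboundary of the translation number. The translation number is a homogeneous quasimorphism of defect $1$. Bavard's lemma says a homogeneous quasimorphism has defect at most twice that of any quasimorphism at bounded distance from it, which gives $\|e_b^{\mathbb{R}}\| \geq \frac{1}{2}$ directly. That route is intrinsic to the group, but you then have to verify that the defect is exactly $1$.

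One technical point in your step 2. Suppose you record each fan triangle $(v_0, v_i, v_{i+1})$ by the labels read along the boundary word. Then the resulting bar chain is not literally a cycle: its boundary contains terms $[x]+[x^{-1}]$ for $2g-1$ of the generators. The fix is to orient the polygon edges by their generators and the diagonals away from $v_0$. For the word $a_1 b_1 a_1^{-1} b_1^{-1} \cdots a_g b_g a_g^{-1} b_g^{-1}$, both polygon edges at $v_0$ then also point away from $v_0$. This gives a one-vertex $\Delta$-complex structure on $S_g$. The triangles whose outer edge carries an inverse letter enter with sign $-1$. The signed sum is a genuine cycle with $4g-2$ simplices representing $[S_g]$, which is all your step 3 needs.
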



	See \cite{ghys2001groups} for the proof. Given an action\index{action}\index{action!of a group}\index{group action} $\rho \colon \Gamma \rightarrow \mathrm{Homeo}^+(S^1)$ of a group $\Gamma$ on the circle\index{action!on the circle}\index{group action!by homeomorphisms of the circle}, we can define a bounded Euler class $\rho^*(e_b)$ by pulling back the bounded Euler class of $H^2_b(\mathrm{Homeo}^+(S^1) , \mathbb{Z})$. Ghys has characterised which bounded second cohomology classes are obtained from circle actions \cite{ghys1987groupes, ghys2001groups}. 

	\begin{thm}[Ghys]
		Let $\Gamma$ be a countable group and $c$ be an element of $H^2_b(\Gamma , \mathbb{Z})$. Then there exists a homomorphism $\rho \colon \Gamma \rightarrow \mathrm{Homeo}^+(S^1)$ such that $\rho^*(e_b) = c$ if and only if $c$ can be represented by a cocycle that takes only values $0$ and $1$. 
	\end{thm}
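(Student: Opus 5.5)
The forward direction is immediate from the construction of $e_b$ above. Given $\rho$, pull back the canonical cocycle: set $c_\rho(\gamma_1,\gamma_2) := \overline{c}(\rho(\gamma_1),\rho(\gamma_2))$. This is a bounded cocycle on $\Gamma$ representing $\rho^*(e_b)$. We computed above that $\overline{c}$ takes values only in $\{0,1\}$ when we use the section normalised by $s(f)(0)\in[0,1)$, so $c_\rho$ does as well. Strictly speaking, "takes only values $0$ and $1$" refers to some cocycle in the class, and the class $\rho^*(e_b)$ contains this one.

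For the converse, suppose $c$ is an inhomogeneous bounded cocycle with values in $\{0,1\}$. The plan is to build the central extension and let it act on an ordered set, then realise that set inside $\mathbb{R}$.

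\textbf{Step 1: the extension and a left order.} Form the central extension $\hat\Gamma = \mathbb{Z}\times\Gamma$ with product
\[ (m,\gamma)(n,\delta) = (m+n+c(\gamma,\delta),\gamma\delta). \]
If needed, first normalise $c$ so that $c(1,\gamma)=c(\gamma,1)=0$; this is possible because the cocycle identity forces $c(1,\gamma)=c(1,1)$. Declare $(m,\gamma)\ge 0$ iff $m\ge 0$, and use the left translates of this to define an order. The key verification is that the resulting relation on $\hat\Gamma$ is a total order invariant under left multiplication. In addition, the central element $z=(1,1)$ must be cofinal and satisfy $g<gz$. I expect this to use exactly that $c\in\{0,1\}$, via a translation-number style comparison: $x\le y$ should imply $gx\le gy$, because the carry $c$ is $0$ or $1$ as in the addition of fractional parts. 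I expect this to be the main obstacle. The right definition is probably $x=(m,\gamma)\le y=(n,\delta)$ iff $n-m+c(\gamma^{-1},\delta)-c(\gamma^{-1},\gamma)\ge 0$, or a similar expression. Checking transitivity and left-invariance will need the cocycle identity together with $\{0,1\}$-valuedness, which is essentially the converse of the computation showing $\overline{c}(f,g)(0)\in[0,2)$.

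\textbf{Step 2: embedding in $\mathbb{R}$.} $\hat\Gamma$ is countable, since $\Gamma$ is. I will take the standard dynamical realisation of a countable left-ordered group: enumerate $\hat\Gamma$ and embed it order-preservingly into $\mathbb{R}$, choosing the embedding $\iota$ so that $\iota(gz)=\iota(g)+1$. This is possible by placing one fundamental domain $[g, gz)$ in $[0,1)$ using the cofinality and $z$-periodicity of the order. The left action of $\hat\Gamma$ on $\iota(\hat\Gamma)$ is then order-preserving, so it extends to the closure, and affinely across complementary gaps, giving a homomorphism $\hat\rho\colon\hat\Gamma\to\widetilde{\mathrm{Homeo}^+}(S^1)$ with $z\mapsto$ translation by $1$.

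\textbf{Step 3: descending and computing the Euler class.} Since $z$ maps to translation by $1$, $\hat\rho$ descends to $\rho\colon\Gamma\to\mathrm{Homeo}^+(S^1)$. Evaluate at the point $0=\iota(1)$. The lift $\hat\rho(0,\gamma)$ sends $0$ into $[0,1)$, because $1\le(0,\gamma)<z$ in the order; this condition is exactly what the definition of the order should guarantee. Hence $\hat\rho(0,\gamma)=s(\rho(\gamma))$ for the canonical section, and therefore $\overline{c}(\rho(\gamma),\rho(\delta))=c(\gamma,\delta)$ on the nose. This gives $\rho^*(e_b)=[c]$, and the only care needed is the normalisation chosen in Step 1.
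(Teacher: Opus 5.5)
The paper gives no proof of this theorem; it refers to Ghys. Your outline is Ghys's argument: pull back the canonical cocycle for necessity, and for sufficiency build $\hat\Gamma$, take the cone $P=\{(m,\gamma): m\ge 0\}$, realise on $\mathbb{R}$ with $z$ acting by the unit translation, and identify $\hat\rho(0,\gamma)$ with the canonical lift. The forward direction is fine. Step 1, however, makes two false claims, and each needs a real repair.

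\textbf{Normalisation.} Suppose $c(1,1)=1$. Then $c(1,\gamma)=c(\gamma,1)=1$ for all $\gamma$. Subtracting the constant cocycle $1$ (the coboundary of the constant $1$-cochain) leaves $c-1$ with values in $\{-1,0\}$, so the hypothesis you rely on is lost. Concretely, $P$ is no longer closed under products, since $(0,\gamma)(0,\delta)=(-1,\gamma\delta)$ whenever $c(\gamma,\delta)=0$. This case genuinely occurs. The section normalised by $s(f)(0)\in(0,1]$ also produces $\{0,1\}$-valued cocycles, but with $c(1,1)=1$. For rotation by $1/2$ on $\mathbb{Z}/2$ it gives $c(R,R)=0$ and $c=1$ on the other three pairs.

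One repair is as follows. The cocycle $1-c$ is normalised, $\{0,1\}$-valued, and represents $-[c]$ in $H^2_b(\Gamma,\mathbb{Z})$. Realise it by some $\rho'$ using the normalised case, and set $\rho=r\rho' r$ with $r(x)=-x$. Conjugating lifts by $x\mapsto -x$ turns the section normalised in $[0,1)$ into one normalised in $(-1,0]$, which is a bounded perturbation of $s$, and it negates the cocycle. Hence $\rho^*(e_b)=-(\rho')^*(e_b)=[c]$.

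\textbf{Order versus preorder.} Even for normalised $c$, the relation is not a total order. The checks you expect to be the main obstacle are immediate:
\begin{itemize}
\item $P\cdot P\subset P$ because $c\ge 0$;
\item $P\cup P^{-1}=\hat\Gamma$ because $(m,\gamma)^{-1}=(-m-c(\gamma^{-1},\gamma),\gamma^{-1})$ and $c\le 1$.
\end{itemize}
What fails is antisymmetry. One computes $P\cap P^{-1}=H:=\{(0,\gamma): c(\gamma^{-1},\gamma)=0\}$, which is typically nontrivial. For example, take $c\equiv 0$ and $\Gamma$ with torsion. Then $\hat\Gamma=\mathbb{Z}\times\Gamma$ has torsion and is not left-orderable at all, so the injective order-preserving $\iota$ of Step 2 cannot exist.

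The fix: $H$ is a subgroup, and the preorder induces a left-invariant total order on the countable set $\hat\Gamma/H$ of left cosets. Realise the action of $\hat\Gamma$ on $\hat\Gamma/H$ instead of on $\hat\Gamma$.

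Also, ``order-preserving, so it extends to the closure'' is false for an arbitrary embedding; a badly chosen embedding can force the would-be extension to collapse a complementary gap to a point. Use the standard midpoint enumeration embedding. Then, rather than building periodicity into the embedding, conjugate $\hat\rho(z)$ to $x\mapsto x+1$ by a conjugacy taking $\iota(H)$ to $0$. This is possible because $\hat\rho(z)$ is fixed-point free and moves points to the right, since $z$ is central and cofinal.

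With these changes Step 3 works as you wrote it. The relation $H\le (0,\gamma)H<zH$ gives $\hat\rho(0,\gamma)=s(\rho(\gamma))$, and hence $\overline{c}(\rho(\gamma),\rho(\delta))=c(\gamma,\delta)$.
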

	
	In calculating the group cohomology of a group $G$, we can use any classifying space for $G$. Namely any path-connected space $X$ with $\pi_1(X) \cong G$ and having trivial higher homotopy groups satisfies $H^k(G ; \mathbb{R}) \cong H^k(X ; \mathbb{R})$. If $M$ is an aspherical 3-manifold (for example if $M$ is hyperbolic) then $M$ is a classifying space for $\pi_1(M)$, since the universal cover of $M$ is contractible. Therefore there is a natural isomorphism $H^k(\pi_1(M), \mathbb{R}) \cong H^k(M , \mathbb{R})$. So Question \ref{que:euler class one for actions} could be paraphrased as whether for $M$ a closed hyperbolic 3-manifold with first Betti number at least one, every integral cohomology class $c \in H^2(\pi_1(M), \mathbb{R}) \cong H^2(M , \mathbb{R})$ of dual Thurston norm one has a cocycle representative taking only values $0$ and $1$. 

	\subsection{Actions on the universal circle}
	\label{sec:universal circle}
	
	The uniformisation theorem\index{uniformisation theorem} states that given a Riemann surface $S$ there is a Riemannian metric in the same conformal class that is of constant curvature. In particular, if $\chi(S) <0 $, then $S$ admits a complete hyperbolic metric\index{hyperbolic!metric}. Let $\mathcal{F}$ be a foliation (or lamination\index{lamination!by surfaces}) of a 3-manifold $M$ by surfaces, and $g$ be a metric on $M$. Candel \cite{candel2003foliations} proved a parametric version of the uniformisation theorem and showed that, assuming certain necessary condition, there is a metric on $M$ in the same conformal class as $g$ such that the induced metric on each leaf of $\mathcal{F}$ is a complete hyperbolic metric. 
	
	\begin{thm}[Candel's uniformisation theorem]
		Let $\Lambda$ be a Riemann surface lamination such that for every invariant transverse measure\index{invariant!transverse measure} $\mu$ we have $\chi(\mu)<0$. Then there is a continuously varying leafwise metric on $\Lambda$ where the leaves are locally isometric to the hyperbolic plane. 
	\end{thm}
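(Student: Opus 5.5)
The proof will pass through the universal cover: first use the classical uniformisation theorem leaf by leaf, then use the measure hypothesis to make the resulting metrics vary continuously across leaves. Concretely, I would work with the leafwise conformal structure $\Lambda$ and look for, on each leaf $L$, the unique complete metric of curvature $-1$ in its conformal class. For a leaf that is hyperbolic in the conformal sense (universal cover the disc), this is the Poincaré metric. The first task is therefore to show that every leaf is conformally hyperbolic, i.e. no leaf is conformally covered by $\mathbb{C}$ or the sphere. This is where the hypothesis $\chi(\mu)<0$ for every invariant transverse measure $\mu$ enters. If some leaf were parabolic, I would take an exhaustion of it by regions whose boundary length is small relative to their area; this exists by an averaging (Følner-type) argument. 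Normalising the counting/area measures on these regions and taking a weak limit produces an invariant transverse measure $\mu$. Gauss--Bonnet on the exhausting regions, with boundary terms negligible, then gives $\chi(\mu)\ge 0$, contradicting the hypothesis. Spherical leaves are excluded the same way, since a compact leaf supports an atomic invariant measure with $\chi>0$.

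Second, fix a background leafwise smooth metric $g$ in the conformal class, and write the Poincaré metric on each leaf as $e^{2u}g$. The function $u$ solves the leafwise Liouville equation $\Delta_g u - K_g = e^{2u}$, with the sign convention making the curvature $-1$. I will show $u$ is bounded above and below uniformly on the whole lamination. The upper bound is the easy direction: by the Ahlfors--Schwarz lemma, comparing with a metric of curvature $\le -1$ on small leafwise discs of uniform size bounds $u$ from above, using compactness of $M$ and bounded geometry of $g$.

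The lower bound is the \emph{main obstacle}: it says the Poincaré metric does not degenerate. I would argue by contradiction. Suppose $u(x_n)\to-\infty$ along a sequence of points. Renormalising, the leaves through $x_n$ become "increasingly parabolic" at scale $g$. Passing to a limit in the compact lamination space gives a leaf, or a limiting sequence of large regions, with almost-zero Poincaré density. From these regions one builds, as in the first step, an invariant transverse measure with $\chi(\mu)\ge 0$, again a contradiction. Making this quantitative, namely showing that a small Poincaré density at a point forces large $g$-discs of small isoperimetric ratio, is where I expect the real work to lie. I would try to handle it via harmonic-measure / extremal-length estimates on the universal covers of the leaves.

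Finally, continuity transverse to the leaves comes from the uniform bounds together with uniqueness of the Poincaré metric. Take a sequence of points converging in a flow-box. The corresponding lifted leafwise maps to the disc, normalised at basepoints, form a normal family by the uniform bounds. Any limit is a conformal covering, hence the Poincaré map of the limit leaf by uniqueness. It follows that $u$ is continuous on $M$, and that $e^{2u}g$ is the required continuously varying leafwise hyperbolic metric.
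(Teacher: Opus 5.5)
The paper gives no proof of this statement; it is quoted from Candel. Measured against the usual argument, your architecture is right. First, the measure hypothesis makes every leaf conformally hyperbolic. Second, compactness makes the leafwise Poincar\'e metric uniformly comparable to $g$ and transversally continuous. There is, however, a genuine gap at the lower bound on $u$. You leave it unproved, and your sketch is circular: concluding that a limit leaf has ``almost-zero Poincar\'e density'' presupposes exactly the lower semicontinuity that the bound is needed for.

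The missing idea is Brody's reparametrisation lemma, which makes this step soft:
\begin{enumerate}
\item If $u(x_n)\to-\infty$, there are leafwise holomorphic maps $f_n\colon\mathbb{D}\to L_{x_n}$ with $\|f_n'(0)\|_g\to\infty$.
\item Reparametrising gives leafwise holomorphic $h_n$ on $\{|z|<R_n\}$, with $R_n\to\infty$, $\|h_n'(0)\|_g=1$ and $\|h_n'(z)\|_g\le R_n^2/(R_n^2-|z|^2)$.
\item Compactness of $M$ and Arzel\`a--Ascoli in flow boxes give a nonconstant holomorphic limit $h\colon\mathbb{C}\to L$ into a single leaf $L$, with $\|h'\|_g\le 1=\|h'(0)\|_g$.
\item This contradicts the hyperbolicity of $L$, which your first step gives for every leaf.
\end{enumerate}
So the real content is in your first step, not where you placed it.

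That first step has its own gap. Gauss--Bonnet reads $\int_D K\,dA=2\pi\chi(D)-\int_{\partial D}k_g\,ds$. A F{\o}lner condition $\ell(\partial D)=o(\operatorname{area} D)$ controls neither the total geodesic curvature nor the number of boundary components, so the boundary terms are not negligible. For instance, for the Ahlfors discs of a parabolic leaf, with $\pi^*g=\lambda^2|dz|^2$, the curvature integral over $\{|z|<r\}$ is $-r\frac{d}{dr}\int_0^{2\pi}\log\lambda(re^{i\theta})\,d\theta$, which has no a priori sign.

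There are two ways to close this:
\begin{enumerate}
\item Regularise the F{\o}lner regions using bounded geometry on the universal cover of the leaf. This gives planar, piecewise-geodesic regions with $O(1+\ell(\partial D))$ corners and components.
\item More cleanly, run Ahlfors' construction on the Brody curve $h$, using Nevanlinna averages $\frac{1}{T(r)}\int_0^r h_*(\mathrm{area}|_{\{|z|<t\}})\frac{dt}{t}$. With $\lambda=\|h'\|_g$, Jensen's formula gives
\[ \int_0^r\frac{dt}{t}\int_{\{|z|<t\}}(K\circ h)\,\lambda^2\,dx\,dy=2\pi\log\lambda(0)-\int_0^{2\pi}\log\lambda(re^{i\theta})\,d\theta+2\pi N(r)\ge 0, \]
where $N(r)\ge 0$ counts critical points of $h$, and the sign holds because $\lambda\le 1=\lambda(0)$. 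The length--area inequality makes the boundary mass $o(T(r))$ along a subsequence. A weak limit then defines an invariant transverse measure with $\chi(\mu)\ge 0$. This rules out non-hyperbolic leaves and degeneration of the Poincar\'e metric at once.
\end{enumerate}

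Finally, your assertion that any limit of the normalised uniformisations ``is a conformal covering'' is exactly upper semicontinuity of the Poincar\'e density, and it needs an argument. One option is to push a nearly extremal compact disc at $x$ into nearby leaves. Another is to use both uniform bounds and the uniform leafwise injectivity radius to pass inverse branches over plaques to the limit.
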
	
	
	When $\mathcal{F}$ is a taut foliation of an atoroidal 3-manifold, every invariant transverse measure has negative Euler characteristic, and so by Candel's theorem, $M$ admits a metric such that the induced metric on every leaf is hyperbolic. In this case, Thurston \cite{thurston1998three} and Calegari and Dunfield \cite{calegari2003laminations} constructed a faithful action\index{action!faithful} 
	\[ \rho_{\mathrm{univ}} \colon \pi_1(M) \rightarrow \mathrm{Homeo}^+(S^1), \]
	called the \emph{universal circle action}\index{universal circle!action}\index{universal circle}. Given a leaf $L$ of $\mathcal{F}$, there is an action of $\pi_1(L)$ on the ideal boundary $\partial \mathbb{H}^2 \cong S^1_\infty$ of its universal cover $\tilde{L} \cong \mathbb{H}^2$, and the universal circle action collates such actions for different leaves $L$ of the foliation $\mathcal{F}$. The Euler class of the action $\rho_\mathrm{univ}$ is defined as the Euler class of the associated $S^1$-bundle over $M$, constructed via the suspension construction. It is known that the Euler class of $\rho_\mathrm{univ}$ is equal to the Euler class of the taut foliation $\mathcal{F}$. See Boyer and Hu \cite{boyer2019taut} for a proof of this fact. Therefore, Question \ref{que:euler class one for actions} is weaker than the original Euler class one conjecture for taut foliations. In particular, by Gabai's Theorem \ref{thm:Gabai-euler class one for vertices} (Euler class one for vertices) and the universal circle action construction, every vertex of the unit ball of the dual Thurston norm is realised as the Euler class of an action of $\pi_1(M)$ by orientation-preserving homeomorphisms on $S^1$. As a first step towards Question \ref{que:euler class one for actions}, it would be interesting to find a purely algebraic proof of this latter fact.
	
	 There are also constructions of universal circle actions for pseudo-Anosov flows, and for quasigeodesic flows on closed hyperbolic 3-manifolds. Calegari and Dunfield \cite{calegari2003laminations} showed that if $M$ is a closed atoroidal 3-manifold with a pseudo-Anosov flow $\Phi$, then there is an action of $\pi_1(M)$ on a circle $S^1_{\mathrm{univ}}$ that preserves a pair of invariant laminations\index{invariant! laminations}. For a quasigeodesic flow on a hyperbolic 3-manifold, the leaf space\index{flow!leaf space of}\index{leaf!space} $P_\Phi$ of the lifted flow $\tilde{\Phi}$ to the universal cover $\tilde{M}$ is homeomorphic to $\mathbb{R}^2$. Moreover, Calegari \cite[Remark 5.2]{calegari2006universal} showed that there is a compactification of $P_\Phi$ to a closed disc $D_\Phi  = P_\Phi \cup S^1_\mathrm{univ}$ such that the natural action of $\pi_1(M)$ on $P_\Phi$ extends to the boundary $\partial D_\Phi =  S^1_\mathrm{univ}$. Additionally, the action of $\pi_1(M)$ on the circle $S^1_{\mathrm{univ}}$ 
	\[ \rho_\Phi \colon \pi_1(M) \rightarrow \mathrm{Homeo}^+(S^1_{\mathrm{univ}})  \]
	preserves a pair of invariant laminations \cite[Theorem A]{calegari2001leafwise}, and the Euler class of the associated $S^1$-bundle over $M$ is equal to the Euler class of the normal plane field to the flow \cite[Lemma 6.4]{calegari2001leafwise}. We would like to stress that while we denoted several circles by the same symbol $S^1_{\mathrm{univ}}$, the relation between the various universal circle actions above, for taut foliations, pseudo-Anosov flows, and quasigeodesic flows, is subject of current research. See for example \cite{huang2024depth, landry2024simultaneous}.

	\section{Virtual Euler class one conjecture}
	
	\label{sec: virtual}
	
	Let $M$ be a closed orientable 3-manifold and $p \colon \tilde{M} \rightarrow M$ be a finite covering map. Denote by $p^* \colon H^2(M; \mathbb{R}) \rightarrow H^2(\tilde{M}, \mathbb{R})$ the induced map on cohomology. It follows from deep results of Gabai \cite{gabai1983foliations} that $p^*$ preserves the dual Thurston norm, see \cite[Proposition 2.20]{yazdi2020thurston}. Now let $a \in H^2(M ; \mathbb{R})$ be an integral point satisfying the parity condition. If $a$ is equal to the Euler class of some taut foliation $\mathcal{F}$ on $M$, then $p^*(a)$ is equal to the Euler class of the lifted foliation $\tilde{\mathcal{F}}$ on $\tilde{M}$. However, even if $a$ is not realised as the Euler class of any taut foliation on $M$, it is possible that $p^*(a)$ is realised by a taut foliation on $\tilde{M}$. The following question was asked by the author in \cite{yazdi2020thurston}. 
	
	\begin{question}
		Let $M$ be a closed hyperbolic 3-manifold with positive first Betti number. Let $a \in H^2(M ; \mathbb{R})$ be an integral point of norm equal to (respectively at most) one and satisfying the parity condition. Is there a finite covering $p \colon \tilde{M} \rightarrow M$ and a taut foliation on $\tilde{M}$ whose Euler class is equal to $p^*(a)$?
		\label{que:virtual-taut foliation}
	\end{question}

	Let $p \colon \tilde{M} \rightarrow M$ be a covering map. Then $p$ induces a map $p^* \colon H^2(M ; \mathbb{R}) \rightarrow H^2(\tilde{M}; \mathbb{R})$. There is also a map in the opposite direction called the \emph{Umkehr homomorphism}\index{Umkehr homomorphism} 
	\[ p! \colon H^2(\tilde{M}; \mathbb{R}) \rightarrow H^2(M ; \mathbb{R})\] 
	defined as $p! = \mathrm{PD} \circ p_* \circ \mathrm{PD}$, where $\mathrm{PD}$ is the Poincar\'{e} duality map, and $p_* \colon H_1(\tilde{M}; \mathbb{R}) \rightarrow H_1(M ; \mathbb{R})$ is the induced map on homology. The Umkehr homomorphism is also called the transfer homomorphism, but we will use Umkehr here since we have already used the word transfer map in a different meaning for the map on bounded cohomology. Note that if $a \in H^2(M ; \mathbb{R})$ then $p! (p^*(a)) = [\tilde{M}: M] \cdot a$. See \cite[Section 3G]{hatcher2002algebraic} for basic properties of Umkehr homomorphisms. 
	
	The following result of Yi Liu \cite[Lemma 5.1]{liu2024criterion} gives some evidence for a positive answer to the above question. It shows that given any \emph{rational} cohomology class $a \in H^2(M ; \mathbb{R})$ of dual norm one, there exists a finite cyclic cover $p \colon \tilde{M} \rightarrow M$ and a taut foliation $\tilde{\mathcal{F}}$ of $\tilde{M}$ such that the \emph{projection} of the Euler class of $\tilde{\mathcal{F}}$ to $H^2(M ; \mathbb{R})$ under the Umkehr homomorphism $p! \colon H^2(\tilde{M}; \mathbb{R}) \rightarrow H^2(M ; \mathbb{R})$ is the same as the \emph{projection} of $p^*(a)$ to $H^2(M ; \mathbb{R})$. 
	
	\begin{thm}[Liu]
		Let $M$ be a closed oriented hyperbolic 3-manifold. Denote by $B_x(M)$ the unit ball of the Thurston norm of $M$, and by $B_{x^*}(M)$ the unit ball of the dual norm. Let $F \subset \partial B_{x^*}(M)$ and $F^\wedge \subset \partial B_x(M)$ be a dual pair of closed faces.  Suppose that $w \in H^2(M ; \mathbb{R})$ is a rational point in the interior of $F$, and $\Sigma \in H_2(M ; \mathbb{Z})$ is a primitive homology class such that $\Sigma$ lies in the interior of $F^\wedge$. 
		There exists some finite cyclic covering $p \colon \tilde{M} \rightarrow M$ dual to $\Sigma$, and some transversely oriented taut foliation $\tilde{\mathcal{F}}$ on $\tilde{M}$ such that the following equality holds in $H^2(M ; \mathbb{R})$
		\[ p!(e(\mathcal{F})) = [\tilde{M}: M] \cdot w. \]
		\label{thm:Liu-virtual-taut foliation-projection}
	\end{thm}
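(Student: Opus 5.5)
We plan to realise $w$ after passing to a cyclic cover by assembling a foliation out of lifts of norm-minimising surfaces whose classes lie in the cone over $F^\wedge$. We use Gabai's Theorem \ref{thm:norm-minimising surfaces are leaves} on a suitable cut-open manifold, and we make use of the fact that only the image of the Euler class under $p!$ is being prescribed.

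\textbf{Step 1: reduce $w$ to vertices.} Since $w$ is rational and lies in the interior of $F$, we write
\[ w=\sum_j \lambda_j v_j, \]
where the $v_j$ are vertices of $F$ and the $\lambda_j>0$ are rational numbers summing to one. Choose a common denominator $N$, so that $\lambda_j=n_j/N$ with $n_j\in\mathbb{Z}_{>0}$ and $\sum_j n_j = N$. Each $v_j$ is dual to a top-dimensional face of $B_x$ containing $F^\wedge$. Hence we can pick a primitive class $\Sigma_j$ in the cone over the interior of that top-dimensional face.

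\textbf{Step 2: the cyclic cover.} Let $\Sigma$ be represented by a norm-minimising surface $S$; after making it coherent we may take $S$ connected and nonseparating, since $\Sigma$ is primitive. Let $p\colon\tilde M\to M$ be the $N$-fold cyclic cover dual to $\Sigma$. It is obtained by cutting $M$ along $S$ into $M\setminus\setminus S$ and stacking $N$ copies $Y_1,\dots,Y_N$ cyclically along lifts $\tilde S_1,\dots,\tilde S_N$ of $S$.

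\textbf{Step 3: a foliation on each block.} Partition the $N$ blocks into groups of sizes $n_j$. On a block assigned to $v_j$ we want a taut foliation of the sutured manifold $Y_i$ whose relative Euler class "looks like" $v_j$. To get it, take a norm-minimising surface $R_j$ in the class $\Sigma_j+k\Sigma$ for large $k$; this class still lies in the open cone over the top-dimensional face dual to $v_j$, because $\Sigma\in F^\wedge$ lies in the boundary of that face and norm is additive on the cone. Double-curve-sum (cut and paste) $R_j$ with copies of $S$ so that, inside $Y_i$, we obtain a surface decomposition of the taut sutured manifold $(Y_i,\gamma)$ with $R(\gamma)=\tilde S_i\cup\tilde S_{i+1}$. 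Then apply Gabai's Theorem \ref{thm: taut sutured manifolds admit taut foliations}, using that $H_2\neq 0$ for a nontrivial $\Sigma_j$. The resulting foliations agree along the common leaves $\tilde S_i$ and glue to a taut foliation $\tilde{\mathcal F}$ of $\tilde M$ having the $\tilde S_i$ as compact leaves.

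\textbf{Step 4: computing $p!(e(\tilde{\mathcal F}))$.} We must check
\[ \langle p!(e(\tilde{\mathcal F})),\alpha\rangle=\langle e(\tilde{\mathcal F}),p^{-1}\alpha\rangle \]
for classes $\alpha\in H_2(M)$. The transfer $p^{-1}\alpha$ is the full preimage of $\alpha$, which decomposes into pieces, one in each block $Y_i$. On block $i$ (of type $j$) the contribution is governed by the vertex argument from Theorem \ref{thm:Gabai-euler class one for vertices}: the block contains a compact leaf representing a class in the interior of a top-dimensional cone, so the Euler class restricted to the block pairs like $v_j$. Summing over blocks gives
\[ \sum_j n_j\langle v_j,\alpha\rangle=N\langle w,\alpha\rangle, \]
as required.

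\textbf{Main obstacle.} The hard part is making the local contribution in Step 4 precise. Relative Euler classes of blocks depend on boundary trivialisations along the $\tilde S_i$, and a block with boundary is not itself a closed manifold on which the vertex argument applies directly. A cleaner route to try first is to build a foliation $\mathcal F_j$ on a cover, or on $M$ itself, realising $v_j$ with $S$ as a leaf. Cutting every $\mathcal F_j$ along $S$ gives foliated blocks with identical boundary leaves, so we can concatenate $n_j$ copies of each block into $\tilde M$. The Euler class pairing then becomes additive under this concatenation, block by block, provided we cut $\alpha$ transversally to $S$.
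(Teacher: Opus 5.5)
\textbf{Verdict: genuine gap.} The gap is exactly the point you flag as the main obstacle. It is not a bookkeeping issue: nothing in Step 3 determines the Euler class contributed by a block.

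\textbf{Why Step 3 does not give $v_j$.} The vertex argument behind Theorem \ref{thm:Gabai-euler class one for vertices} needs a compact leaf, or at least a fully marked surface, whose class lies in the open cone over the top-dimensional face $C_j=v_j^\wedge$. Your block foliations have no such compact leaf. The decomposing surface built from $R_j$ has boundary on the leaves $\tilde S_i\cup\tilde S_{i+1}$, so it cannot survive as a compact leaf; in Gabai's construction it is spun into a non-compact leaf accumulating on them. You also cannot simply take a closed surface $T\subset M\setminus S$ with $[T]$ in that open cone and apply Theorem \ref{thm:norm-minimising surfaces are leaves} to $S\cup T$. There is no reason such a $T$ exists, since its class would have to satisfy $\mathrm{PD}[T]\cup\mathrm{PD}(\Sigma)=0$. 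So Gabai's theorem gives you some taut foliation of each block, but nothing tells you which point of the face $F$ that block contributes. To rescue Step 3 as written, you would have to prove directly that, for the block glued to itself, some closed surface in the open cone over $C_j$ (e.g.\ $R_j$ rerouted along the spun leaves near $S$) is fully marked. You have not addressed this.

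\textbf{The missing ingredient.} The paper instead uses the Fully Marked Surface Theorem.
\begin{enumerate}
\item By Theorem \ref{thm:Gabai-euler class one for vertices}, each vertex $v_j$ of $F$ equals $e(\mathcal{F}_j)$ for some taut foliation $\mathcal{F}_j$ of $M$.
\item Since $\Sigma$ lies in the cone over $F^\wedge\subset v_j^\wedge$, we have $\langle e(\mathcal{F}_j),\Sigma\rangle=x(\Sigma)$, so $\Sigma$ is algebraically fully marked for $\mathcal{F}_j$.
\item Theorem \ref{thm:fully marked surface} then replaces $\mathcal{F}_j$ by a taut foliation with homotopic tangent plane field, hence still with Euler class $v_j$, in which a surface $S_j$ homologous to $\Sigma$ is a union of compact leaves.
\end{enumerate}
This is exactly the input your \emph{cleaner route} lacks. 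With it, you cut along the leaf and stack $n_j$ copies of each cut foliation. You then compute $\langle e,p^{-1}(\alpha)\rangle$ by fixing a common section in $TS\cap T\alpha$ along $S\cap\alpha$. Each copy then contributes $\langle v_j,\alpha\rangle$, and your boundary-trivialisation worry disappears.

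\textbf{A remaining caveat.} The theorem only produces $S_j$ up to homology, and different $j$ may give non-isotopic surfaces. Using a single $S$ for all blocks is the simplifying assumption of the paper's sketch; the general case requires Liu's additional argument.
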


We now give a sketch of Liu's proof of Theorem \ref{thm:Liu-virtual-taut foliation-projection} following \cite{liu2024criterion}. The proof uses the fully marked surface theorem (Theorem \ref{thm:fully marked surface}), the Euler class one theorem for vertices (Theorem \ref{thm:Gabai-euler class one for vertices}), and a construction, due to Liu, of assembling taut foliations together called \emph{medley construction}\index{medley construction}.

\begin{proof}[Sketch of the proof of Theorem \ref{thm:Liu-virtual-taut foliation-projection}]
	
	Let $v_1, \cdots, v_n \in F$ be the set of vertices of $F$. By the Euler class one theorem for vertices (Theorem \ref{thm:Gabai-euler class one for vertices}), for each $v_i$, there exists a taut foliation $\mathcal{F}_i$ on $M$ with Euler class $e(\mathcal{F}_i) = v_i$. Since $\Sigma$ is in the cone over $F^\wedge$, we have the following equality for every $i$
	\begin{eqnarray}
		\langle e(\mathcal{F}_i) , \Sigma \rangle = x(\Sigma), 
		\label{eq:fully marked}
	\end{eqnarray}
	where $x$ denotes the Thurston norm.
	By the fully marked surface theorem (Theorem \ref{thm:fully marked surface}), possibly after replacing $\mathcal{F}_i$ with new foliations but preserving Equality (\ref{eq:fully marked}), we may assume that for each $i$ there exists a closed (possibly disconnected) embedded oriented surface $S_i \subset M$ such that $[S_i] = \Sigma$, and $S_i$ is a union of compact leaves of $\mathcal{F}_i$. In general $S_i$ would be in different isotopy classes and possibly intersect each other, but to simplify the exposition and to bring the main ideas across here we make the assumption that $\Sigma$ is the unique norm-minimising surface in its homology class. For the complete proof without this assumption see Liu \cite{liu2024criterion}. Because of our simplifying assumption, $S_i = \Sigma$ up to isotopy, and so $\Sigma$ is a common leaf of $\mathcal{F}_1, \cdots, \mathcal{F}_n$. 
	
	Since $w \in \mathrm{int} (F) $ we can write it as 
	\[ w =  \frac{a_1 v_1 + \cdots + a_n v_n}{a_1 + \cdots + a_n},\]
	for positive integers $a_1 , \cdots, a_n$. Consider the foliation $\mathcal{F}_i \setminus \setminus \Sigma$ obtained by cutting $\mathcal{F}_i$ along the compact leaf $\Sigma$. There are two copies of $\Sigma$ in the boundary of $\mathcal{F}_i \setminus \setminus \Sigma$, denote them by $\Sigma_+$ and $\Sigma_-$. Consider a cyclic cover $\tilde{M}$ of $M$ dual to $\Sigma$ and of degree $(a_1 + \cdots + a_n)$, obtained by stacking $a_i$ copies of $\mathcal{F}_i \setminus \setminus \Sigma$ together for all $1 \leq i \leq n$, glued along the copies of $\Sigma$ such that $\Sigma_+$ in one copy is glued to $\Sigma_-$ in the adjacent copy. By construction $\tilde{M}$ comes equipped with a foliation $\mathcal{F}$ obtained by gluing a suitable number of copies of the foliations $\mathcal{F}_i \setminus \setminus \Sigma$ together. It is shown in \cite[Lemma 4.3]{liu2024criterion} that the Euler class $e(\mathcal{F})$ satisfies 
	\begin{eqnarray}
		p!(e(\mathcal{F})) = a_1 v_1 + \cdots + a_n v_n,
		\label{eq:euler class of medley construction}
	\end{eqnarray}
	or equivalently
	\[  p!(e(\mathcal{F}))  = (a_1 + \cdots + a_n) w = [\tilde{M} : M] \cdot w. \]
	We now justify Equality (\ref{eq:euler class of medley construction}) by a topological argument. Both sides of Equality (\ref{eq:euler class of medley construction}) are elements of $H^2(M ; \mathbb{R})$, and so it is enough to show that for every embedded oriented surface $S$ in $M$, the evaluations of both sides on the homology class $[S]$ are equal to each other. By definition of the transfer map we have 
	\[ \langle p!(e(\mathcal{F}))  , [S] \rangle = \langle e(\mathcal{F}) , [p^{-1}(S)] \rangle . \]
	So it is enough to show that 
	\begin{align*} \langle e(\mathcal{F}) , [p^{-1}(S)] \rangle  &= \langle  a_1 v_1 + \cdots + a_n v_n , [S] \rangle =  a_1 \langle e(\mathcal{F}_1) , [S] \rangle + \cdots + a_n  \langle e(\mathcal{F}_n) , [S] \rangle.
	\end{align*}
	Isotope $S$ so that it is in general position with respect to $\Sigma$ and intersects $\Sigma$ in a union of simple closed curves. Then $\langle e(\mathcal{F}_i) , [S] \rangle $ is the obstruction for finding a section of $T \mathcal{F}_i$ over $S$. Define a common section of $T \mathcal{F}_i$ over $\Sigma \cap S$ as any non-vanishing vector field $s$ in $T \Sigma \cap T S$. Let $M'$ be the manifold obtained by cutting $M$ along $\Sigma$, and $\mathcal{F}_i'$ be the foliation on $M'$ obtained by cutting $\mathcal{F}_i$ along the compact leaf $\Sigma$.  Therefore $\langle e(\mathcal{F}_i) , [S] \rangle $ is equal to the obstruction for extending the section $s$ to a section of $T \mathcal{F}_i'$ over $S'$. Now the foliation $\mathcal{F}$ is obtained by stacking together $a_i$ copies of $\mathcal{F}_i'$, glued along copies of $\Sigma$. The section $s$ lifts to a section $\tilde{s}$ of $\mathcal{F}$ on $p^{-1}(\Sigma \cap S) = p^{-1}(\Sigma) \cap p^{-1}(S) $, and $\langle e(\mathcal{F}) , [p^{-1}(S)] \rangle $ is the obstruction for extending $\tilde{s}$ to a section over $p^{-1}(S)$. Now the latter obstruction is the sum of obstructions coming from extending the section $\tilde{s}$ to the part of $p^{-1}(S)$ lying inside $a_i$ copies of $\mathcal{F}_i'$ for $1 \leq i \leq n$, which by the previous discussion is equal to $ a_i \langle e(\mathcal{F}_i) , [S] \rangle $. The equality follows. 

\end{proof}
	
	Using this, Liu \cite{liu2024criterion} gave a criterion for Question \ref{que:virtual-taut foliation} to have a positive answer in terms of nonvanishing of Alexander polynomials. His criterion, when satisfied, allows to realise any \emph{rational} point on certain closed faces on the boundary of the dual Thurston norm unit ball. In view of Liu's criterion, the parity condition seems less relevant for Question \ref{que:virtual-taut foliation}, at least when the dual norm of $a$ is equal to one. The following is the combination of \cite[Theorem 1.2]{liu2024criterion} and \cite[Corollary 1.3]{liu2024criterion}.
	
	\theoremstyle{theorem}
	\newtheorem*{Liu}{Theorem \ref{thm: Liu-virtual-taut foliation}}
	\begin{Liu}
		Let $M$ be a closed oriented hyperbolic 3-manifold. Denote by $B_x$ the unit ball of the Thurston norm of $M$, and by $B_{x^*}$ the unit ball of the dual norm. Let $F \subset \partial B_{x^*}$ and $F^\wedge \subset \partial B_{x}$ be a dual pair of closed faces. Suppose that $\psi \in H^1(M ; \mathbb{Z}) $ is a primitive cohomology class such that the Poincar\'{e} dual of $\psi$ lies in the cone over the interior of $F^\wedge$. 
		If the Alexander polynomial $\Delta_M^{\psi}(t)$ does not vanish, then for any rational point $w$ in $F$, there exists some finite cyclic cover $\tilde{M}$ of $M$ dual to $\psi$, such that the pullback of $w$ to $\tilde{M}$ is the real Euler class of some taut foliation on $\tilde{M}$.
	\end{Liu}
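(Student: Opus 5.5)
The plan is to upgrade Theorem \ref{thm:Liu-virtual-taut foliation-projection}, which only controls the Umkehr image $p!(e(\mathcal{F}))$, to an honest equality $e(\mathcal{F}) = p^*(w)$ in $H^2(\tilde{M};\mathbb{R})$. The non-vanishing of $\Delta_M^{\psi}$ should supply the missing control.

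\textbf{Step 1: reduce to the interior of a face.} Every rational point of the closed face $F$ lies in the relative interior of some subface $F'\subset F$. The dual face $F'^\wedge$ contains $F^\wedge$, so $\mathrm{PD}(\psi)$ lies in the cone over $F'^\wedge$. It may, however, only lie on the boundary of that cone, so some care is needed. I would either handle this by a perturbation argument, or simply assume $w\in \mathrm{int}(F)$ and treat the boundary case analogously.

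\textbf{Step 2: build the medley foliation.} Write $w=\sum a_i v_i/\sum a_i$ with positive integers $a_i$. Following the sketch above, obtain taut foliations $\mathcal{F}_i$ with $e(\mathcal{F}_i)=v_i$ (Theorem \ref{thm:Gabai-euler class one for vertices}). Using Theorem \ref{thm:fully marked surface}, arrange that each $\mathcal{F}_i$ has a norm-minimising surface $S_i$ dual to $\psi$ as a union of compact leaves. Then form the medley foliation on a cyclic cover of degree $N\cdot\sum a_i$, where $N$ is a free parameter chosen later.

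The key point is to compute $e(\mathcal{F})$ on all of $H_2(\tilde{M};\mathbb{R})$, not only on lifted classes. Let $\tilde{M}_m\to M$ be the $m$-fold cyclic cover dual to $\psi$, with deck generator $\tau$. Then
\[ H_2(\tilde{M}_m;\mathbb{R})=\mathrm{im}(\text{transfer})\oplus \ker(p_*), \]
where $\ker(p_*)$ is the sum of the nontrivial eigenspaces of $\tau$, indexed by the $m$-th roots of unity $\zeta\neq 1$.

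\textbf{Step 3: compute the Euler class blockwise.} By the cut-and-paste obstruction argument from the previous proof, a class in $H_2(\tilde{M})$ meeting the copies of $\Sigma$ transversely decomposes into pieces in the blocks $M\setminus\setminus\Sigma$. The value of $e(\mathcal{F})$ is the sum of the relative obstructions of the corresponding $\mathcal{F}_i'$ on these pieces. For $p^*(w)$ to match, the pieces over the various blocks must carry the same relative class, so that the evaluation becomes the $a_i$-weighted average.

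I expect this to be the main obstacle. Relative homology classes in the cut-open manifold that fail to close up inside a single block are exactly where the difference $e(\mathcal{F}_i)-e(\mathcal{F}_j)$, vanishing on closed surfaces only up to $\langle\cdot,\Sigma\rangle$, can contribute.

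\textbf{Step 4: use the Alexander polynomial.} This is where $\Delta_M^{\psi}(t)\neq 0$ enters. Its non-vanishing means $H_1$ of the infinite cyclic cover is $\mathbb{Q}[t^{\pm1}]$-torsion, so the Betti numbers of the $\tilde{M}_m$ are controlled by the roots of $\Delta$ that are roots of unity. I would first choose $m$ coprime to the orders of all cyclotomic factors of $\Delta_M^{\psi}$. Then $b_1(\tilde{M}_m)=b_1(M)$, hence $\ker(p_*)=0$ rationally, and every class of $H_2(\tilde{M}_m;\mathbb{R})$ is a rational multiple of a transfer $p^{-1}(S)$.

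Taking $m=N\sum a_i$ with such an $N$ (choosing the $a_i$ suitably, after rescaling, so that this is possible), Theorem \ref{thm:Liu-virtual-taut foliation-projection} and the identity $\langle e(\mathcal{F}), p^{-1}(S)\rangle=\langle p!e(\mathcal{F}),S\rangle = m\langle w,S\rangle=\langle p^*w,p^{-1}(S)\rangle$ give $e(\mathcal{F})=p^*(w)$ on a spanning set, finishing the argument.

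The delicate points are two. First, the arithmetic compatibility between $\sum a_i$ and the forbidden cyclotomic orders; the $a_i$ can be scaled freely, which may help, but $\sum a_i$ itself must avoid bad divisors. Second, verifying the Betti number claim via Fox calculus or Milnor's formula, where I expect nonvanishing of $\Delta$ to be exactly what is needed.
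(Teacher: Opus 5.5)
\textbf{There is a genuine gap.} It is the first of the two ``delicate points'' you flag at the end. Your closing mechanism is the right one, and it is the paper's: if $b_1$ does not grow in the cover, then $p!$ is an isomorphism. Then $p!(e(\mathcal F)) = m\,w = p!(p^*w)$ forces $e(\mathcal F)=p^*w$, which also makes your Step 3 unnecessary.

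The problem is how you prevent $b_1$ growth. You try to choose the degree $m$ of the medley cover of $M$ so that it avoids the orders of the root-of-unity roots of $\Delta_M^\psi$, but that degree is not yours to choose. The medley realising $w$ has degree $m=\sum a_i$ with $\sum a_i v_i = m\,w$. So $m\lambda_i\in\mathbb Z$ for the barycentric coordinates $\lambda_i$ of $w$, which are unique when $F$ is a simplex, e.g.\ an edge. Rescaling replaces $m$ by $Nm$, which can only add prime factors, never remove them.

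Concretely, suppose $\Delta_M^\psi$ has a primitive $k$-th root of unity as a root for some $k>1$; the hypothesis $\Delta_M^\psi\neq 0$ does not rule this out. Take $w=(v_1+(k-1)v_2)/k$ in the interior of an edge $F$. Then every admissible $m$ is divisible by $k$, so $b_1(\tilde M_m)>b_1(M)$ and $\ker p_*\neq 0$. You are then back at the unresolved Step 3.

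\textbf{The missing idea is to maximise $b_1$ first rather than keep it minimal.}
\begin{enumerate}
\item Non-vanishing of $\Delta_M^\psi$ is used only to get a bound $b_1(\tilde M_m)\le \deg\Delta_M^\psi+1$ that is uniform over all finite cyclic covers dual to $\psi$. Hence some cyclic cover $M'$ of degree $d$ attains the maximal $b_1$; for instance, take $d$ divisible by all the bad orders.
\item Pull everything back to $M'$. The class $\psi'=p^*\psi$ has divisibility $d$, so $\Sigma'=\mathrm{PD}(\psi'/d)$ is primitive. The pullback $w'$ lies in the interior of the minimal face $F'$ of $B_{x^*}(M')$ containing the pullback of $F$.
\item Apply Theorem \ref{thm:Liu-virtual-taut foliation-projection} on $M'$, using the vertices of $F'$ (taut foliations on $M'$), not the foliations $\mathcal F_i$ on $M$.
\item Whatever degree the resulting cover $\tilde M\to M'$ has, $\tilde M$ is again a finite cyclic cover of $M$ dual to $\psi$. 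So $b_1(\tilde M)\le b_1(M')$ by maximality, and $b_1(\tilde M)\ge b_1(M')$ by the transfer.
\item Hence $p!\colon H^2(\tilde M;\mathbb R)\to H^2(M';\mathbb R)$ is an isomorphism, and $e(\tilde{\mathcal F})=p^*w'$, which is the pullback of $w$.
\end{enumerate}
This is exactly the paper's argument. It makes the arithmetic of $\sum a_i$, and the eigenspace analysis you anticipated, irrelevant.
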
	
	
	As a corollary, Liu gives examples of hyperbolic 3-manifolds with first Betti numbers 2 and 3 respectively such that every \emph{rational} point on the boundary of the unit ball of the dual Thurston norm is virtually realised as the real Euler class of a taut foliation. We now give a sketch of Liu's proof of Theorem \ref{thm: Liu-virtual-taut foliation} following \cite{liu2024criterion}.
	
	\begin{proof}[Sketch of proof of Theorem \ref{thm: Liu-virtual-taut foliation}] Let $M$ be a closed hyperbolic 3-manifold and $\psi \in H^1(M ; \mathbb{Z})$ be a primitive cohomology class such that the Alexander polynomial $\Delta_M^\psi(t)$ does not vanish. The nonvanishing of the Alexander polynomial implies that for every finite \emph{cyclic} cover of $M$ dual to $\psi$, the first Betti number of the cover is at most 
	\[ \deg(\Delta_M^\psi(t))+1. \]
	Here $\deg(\Delta_M^\psi(t))$ is defined as the difference between the degrees of the highest and the lowest power of $t$. The important point here is that the bound does not depend on the degree of the cyclic cover. See \cite[Lemma 3.3]{liu2024criterion} for a proof of this fact. Let $M'$ be a finite cyclic cover of $M$ dual to $\psi$ such that $b_1(M')$ is maximal. 
	
	Setting $d = [M' : M]$, the pullback $\psi' \in H^1(M' ; \mathbb{Z})$ has divisibility $d$, and so $\Sigma' = \mathrm{PD}(\psi'/d)$ is a primitive cohomology class. For simplicity we assume that $\omega$ lies in the \emph{interior} of $F$, the case that $\omega$ lies in the boundary of $F$ follows from this, see \cite[page 2]{liu2024criterion} for this deduction. Let $w' \in H^2(M' ;\mathbb{R})$ be the pullback of $w$ to $M'$. If $F' \subset \partial B_{x^*}(M')$ is the minimal closed face containing the pullback of $F$, then $w'$ lies in the interior of $F'$. Applying Theorem \ref{thm:Liu-virtual-taut foliation-projection} to $M'$ with respect to $w'$ and $\Sigma'$, we obtain some finite cyclic covering $p \colon \tilde{M} \rightarrow M'$ dual to $\Sigma'$, and some taut foliation $\tilde{\mathcal{F}}$ on $\tilde{M}$ such that the projection of the Euler class $e(\tilde{\mathcal{F}})$ to $H^2(M' ; \mathbb{R})$ under the Umkehr homomorphism $p! \colon H^2(\tilde{M}; \mathbb{R}) \rightarrow H^2(M' ; \mathbb{R})$ is equal to $[\tilde{M} : M'] \cdot w'$. 
	
	Since we assumed that $M'$ already maximises the first Betti number between all cyclic covers of $M$ dual to $\psi$, we have $b_1(\tilde{M}) = b_1(M')$. Therefore the Umkehr homomorphism $p! \colon H^2(\tilde{M} ; \mathbb{R}) \rightarrow H^2(M' ; \mathbb{R})$ is an isomorphism, since $p! = \mathrm{PD} \circ p_* \circ \mathrm{PD}$ is a composition of isomorphisms. Hence $e(\tilde{\mathcal{F}})$ is equal to the pullback of $w' \in H^2(M' ; \mathbb{R})$ to $\tilde{M}$, as their images under the injective map $p!$ are the same. But the pullback of $w'$ to $\tilde{M}$ is equal to the pullback of $w$ to $\tilde{M}$, and so $e(\tilde{\mathcal{F}})$ is also equal to the pullback of $w$ to $\tilde{M}$, completing the proof.

	\end{proof}

	\bibliographystyle{alpha}
	\bibliography{Reference-foliation-and-contact}
	
	\printindex
\end{document}